\newcommand{\mainsectionstyle}{%
  \renewcommand{\@secnumfont}{\bfseries}
  \renewcommand\section{\@startsection{section}{2}%
    \z@{.5\linespacing\@plus.7\linespacing}{-.5em}%
    {\normalfont\bfseries}}%
}
\DeclareMathOperator{\dist}{dist}
\newcommand{\R}{\mathbb{R}}
\newcommand{\N}{\mathbb{N}}
\renewcommand{\S}{\mathbb{S}}
\newcommand{\Z}{\mathbb{Z}}
\newcommand{\HM}{\mathcal{H}}
\newtheorem{thm}{Theorem}[section]
\newtheorem*{thm*}{Theorem}
\newtheorem{lem}[thm]{Lemma}
\newtheorem*{introthm*}{Regularity Theorem}
\theoremstyle{definition}
\newtheorem{defin}[thm]{Definition}
\newtheorem{rem}[thm]{Remark}
\newtheorem*{rem*}{Remark}
\newtheorem{ex}[thm]{Example}
\newcommand{\Fo}{\,\,\,\text{for }\,\,}
\newcommand{\Foa}{\,\,\,\text{for all }\,\,}
\newcommand{\With}{\,\,\,\text{with }\,\,}
\newcommand{\AND}{\,\,\,\text{and }\,\,}
\author{Bastian K{\"a}fer}
\address[B.~K{\"a}fer]{
 \newline{}
 Institut f{\"u}r Mathematik
 \newline{}
 RWTH Aachen University
 \newline{}
 Templergraben~55
 \newline{}
 D-52062 Aachen, Germany}
\email{kaefer@instmath.rwth-aachen.de}
\keywords{Reifenberg-flatness, submanifolds, Grassmannian}
\subjclass[2010]{28A75, 53A07, 53C40}
\date{\today}
\title[Reifenberg type characterization for $C^1$-submanifolds
]
{A Reifenberg type characterization for $m$-dimensional $C^1$-submanifolds of $\R^n$
  }
\begin{document}
\frenchspacing

\begin{abstract}
We provide a Reifenberg type characterization for $m$-dimensional $C^1$-submani-folds of $\R^n$. This characterization is also equivalent to Reifenberg-flatness with vanishing constant combined with suitably converging approximating $m$-planes. 
Moreover, a sufficient condition can be given by the finiteness of the integral of the quotient of $\theta(r)$-numbers and the scale $r$, and examples are presented to show that this last condition is not necessary. 
\end{abstract}

\maketitle
%%%%%%%%%%%%%%%%%%%%%%%%%%%%%%%%%%%%%%%%%%%%%%%%%%%%%%%%%%%%%%%%%%%%%%%%%%%%%%%%%%%%%%%%%%%%%%%%%%

\section{\bf{Introduction}} \label{sec:int}
It is often useful to control local geometric properties of a subset $\Sigma \subset \R^n$ to obtain topological and analytical information about that set.
One of these geometric properties is the local flatness of a set, first introduced and studied by E. R. Reifenberg in \cite{reifenberg_1960} for his solution of the Plateau problem in arbitrary dimensions. The content of his so-called Topological-Disk Theorem is that $\delta$-Reifenberg-flatness ensures that 
$\Sigma$ is locally a topological $C^{0,\alpha}$-disk if $\delta<\delta_0$, where $\delta_0=\delta_0(m,n)$ is a positive constant, which depends only on the dimensions of $\Sigma$ and $n$ (see e.g. \cite{reifenberg_1960}, \cite{morrey_1966}, \cite{hong-wang_2010}). 

\begin{defin} \label{def:Reifenberg-flat}
 Let $n,m \in \N$ with $m<n$ and $\Sigma \subset \R^n$. For $x \in \Sigma$ and $r>0$ set
  \[ \theta_\Sigma(x,r) := \frac 1 r \inf_{L \in G(n,m)} \dist_\HM \Big( \Sigma \cap B_r(x), (x+L) \cap B_r(x) \Big) ,\]
 where $G(n,m)$ denotes the Grassmannian of all $m$-dimensional linear subspaces ($m$-planes) of $\R^n$.\\
 For $\delta >0$, the set $\Sigma$ is called \textit{$\delta$-Reifenberg-flat of dimension $m$} if for all compact sets $K \subset \Sigma$ there exists a radius $r_K>0$
 such that 
  \[ \theta_K(r) := \sup_{x \in \Sigma \cap K} \theta_\Sigma(x,r) \leq \delta \Foa r\in (0,r_K]. \]
 $\Sigma$ is called Reifenberg-flat of dimension $m$ \textit{with vanishing constant} if $\Sigma$ is $\delta$-Reifenberg-flat of dimension $m$ for all $\delta >0$.
\end{defin}

It is easy to see that $\delta$-Reifenberg-flat sets do not have to be $C^1$-submanifolds. For example, for each fixed $\delta>0$, a $\delta$-Reifenberg-flat set of dimension 1 can 
be constructed as the graph of $u \colon \R \to \R: \ x \mapsto \delta \vert x \vert$, which is not a $C^1$-submanifold of $\R^2$. Moreover, even Reifenberg-flatness with vanishing constant is still not 
enough to guarantee $C^1$-regularity. It can be shown that the graph of 
 \[ u \colon \R \to \R,\ x \mapsto \sum_{k=1}^\infty \frac {\cos(2^kx)}{2^k\sqrt k}\]
is a Reifenberg-flat set with vanishing constant (see \cite{Toro_1997}). Nevertheless, although $u$ is continuous, it is nowhere differentiable. Moreover, T. Toro stated that the graph is not rectifiable in the sense of geometric measure theory, and therefore not a $C^1$-submanifold. 
We will show in detail with an indirect argument that $\operatorname{graph}(u)$ cannot be represented as a graph of a $C^1$-function in a neighbourhood of %$\binom{0}{u(0)}$ 
$(0,u(0))$ in Appendix \ref{sec:example_notC1}.

\vspace{\baselineskip}
There are a couple of variations to the definition of Reifenberg-flat sets with additional conditions, which guarantee more regularity than Reifenberg's Topological-Disk Theorem. 
If for a Reifenberg-flat set with vanishing constant there exists in addition, an exponent $\sigma \in (0,1]$ and for each compact set $K \subset \Sigma$ a constant $C_K >0$, such that the decay of the so-called $\beta$-numbers introduced by P. Jones in \cite{jones_1991} can be estimated as
\begin{align}   \beta_\Sigma(x,r) :=\frac 1 r \inf_{L \in G(n,m)} \left( \sup_{y \in \Sigma \cap B_r(x)} \dist(y,x+L) \right) \leq C_Kr^\sigma \Foa x \in K \AND r \leq 1,  \end{align}
then G. David, C. Kenig and T. Toro could show in \cite[Prop. 9.1]{david-kenig-toro_2001}, that $\Sigma$ is an embedded, $m$-dimensional $C^{1,\sigma}$-submanifold of $\R^n$.\\
A weaker assumption on $\Sigma \subset \R^n$ was stated by T. Toro in \cite{toro_1995} calling it \textit{$(\delta,\varepsilon, R)$-Reifenberg-flat at $x\in \Sigma$} for $\delta,\varepsilon,R>0$, if and only if
 \[ \theta_{B_R(x)}(r) \leq \delta \Foa r \in (0,R] \]
and 
 \begin{align} \int \limits_0^R \frac { \theta_{B_R(x)}(r)^2 } r \ dr \leq \varepsilon^2 .  \end{align}
In this setting it can be shown that there exist universal positive constants $\delta_0(m,n)$ and $\varepsilon_0(m,n)$, depending only on the dimensions $m$ and $n$, 
such that all sets $\Sigma \subset \R^n$ that are $(\delta,\varepsilon,R)$-Reifenberg-flat at all of their points with $0<\delta<\delta_0$, $0<\varepsilon<\varepsilon_0$, 
can be locally parameterized, on a scale determined by $R$, by bi-Lipschitz-homeomorphisms over open subsets of $\R^m$. In particular, such sets $\Sigma$ are embedded $C^{0,1}$-submanifolds of $\R^n$.

\vspace{\baselineskip}
In search of a characterization of $C^1$-submanifolds one may consider slightly stronger variants of Toro's integral condition in (2), which on the other hand, need to be 
weaker than the power-decay (1) of the $\beta$-numbers. We will present such a characterization in our main result, Theorem \ref{thm:c1regular} below, but first state a corollary of that result that uses an integral condition stronger than (2). 
This statement was independently proven by A. Ranjbar-Motlagh in \cite{ranjbar-motlagh}.

\vspace{\baselineskip}
\begin{thm} \label{thm:intcondition}
 Let $\Sigma \in \R^n$ be closed. If for all $x \in \Sigma$ there exists a radius $R_x >0$ such that
  \[ \int \limits_0^{R_x} \frac {\theta_{ B_{R_x}(x)}(r)} r \ dr < \infty,\]
  then $\Sigma$ is an embedded, $m$-dimensional $C^1$-submanifold of $\R^n$.
\end{thm}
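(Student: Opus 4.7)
My plan is to derive Theorem~\ref{thm:intcondition} from the main result, Theorem~\ref{thm:c1regular}, so the task reduces to verifying its two hypotheses -- Reifenberg-flatness with vanishing constant together with a suitable convergence of optimal approximating $m$-planes -- from the pointwise integral assumption. First I would localise: fixing $x\in \Sigma$ and working inside $B_{R_x/2}(x)$, the inclusion $\Sigma\cap B_{R_x/2}(y)\subset\Sigma\cap B_{R_x}(x)$ for $y\in\Sigma\cap B_{R_x/2}(x)$ shows that the integrability of $r\mapsto\theta_{B_{R_x}(x)}(r)/r$ is inherited uniformly by all base points in a neighbourhood of $x$. To promote this integrability to the pointwise decay $\theta_\Sigma(y,r)\to 0$ that is needed for vanishing-constant Reifenberg-flatness, I expect to prove an almost-monotonicity inequality of the form $\theta_\Sigma(y,r)\le C\,\theta_\Sigma(y,\rho)$ for every $\rho\in[r,2r]$, which follows by testing the optimal plane at scale $\rho$ against $\Sigma\cap B_r(y)$ and rescaling. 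Averaging this inequality in $\rho$ over the dyadic interval produces
\[
 \theta_\Sigma(y,r)\le C\int_r^{2r}\frac{\theta_{B_{R_x}(x)}(\rho)}{\rho}\,d\rho,
\]
and the right-hand side tends to zero with $r$ by absolute continuity of the integral.

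Second, I would construct a candidate tangent plane $T_y$ for each $y$ in the neighbourhood as the limit of almost-optimal planes $L(y,2^{-k})\in G(n,m)$. The key geometric lemma -- proved by forcing both planes to approximate $\Sigma\cap B_{r/2}(y)$ simultaneously and comparing their graphs over each other -- is
\[
 d_{G(n,m)}\bigl(L(y,r),L(y,r/2)\bigr)\le C\bigl(\theta_\Sigma(y,r)+\theta_\Sigma(y,r/2)\bigr).
\]
Telescoping this over dyadic scales and using the dyadic-versus-integral comparison from Step~1 shows that $\{L(y,2^{-k})\}_k$ is Cauchy in $G(n,m)$, hence converges to some $T_y$, with a convergence rate controlled by the tail $\int_0^{r}\theta_{B_{R_x}(x)}(\rho)/\rho\,d\rho$.

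Third, continuity of $y\mapsto T_y$ is established by comparing $T_y$ and $T_z$ through a common scale $r$ of order $|y-z|$: the triangle inequality in $G(n,m)$ splits the estimate into the tail contributions $d_{G(n,m)}(T_y,L(y,r))$ and $d_{G(n,m)}(L(z,r),T_z)$, both small by Step~2, and the central term $d_{G(n,m)}(L(y,r),L(z,r))$, which is small because $L(y,r)$ and $L(z,r)$ both almost-optimally approximate a common piece of $\Sigma$ (a standard Reifenberg comparison). Once the resulting quantitative modulus of convergence is shown to be uniform on a slightly shrunken neighbourhood of $x$, the hypothesis of Theorem~\ref{thm:c1regular} is satisfied and that theorem delivers the embedded $C^1$-submanifold structure.

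\textbf{Main obstacle.} I expect the most delicate point to be the passage in Step~1 from integrability of $\theta/r$ to pointwise vanishing of $\theta$, since in general $\int_0^R f(r)/r\,dr<\infty$ does not force $f(r)\to 0$; the almost-monotonicity of the $\theta$-numbers -- a purely geometric statement that uses the precise definition of $\theta_\Sigma$ rather than mere integrability -- has to be set up carefully. The second subtle point is matching exactly the form of convergence of planes required by Theorem~\ref{thm:c1regular} (uniform over a compact neighbourhood rather than merely pointwise convergence), which forces the quantitative estimates in Steps~1--2 to be tracked uniformly in the base point.
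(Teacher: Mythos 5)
Your proposal is correct in outline but takes a genuinely different route from the paper's. The paper never proves (and never needs) the almost-monotonicity $\theta_\Sigma(y,r)\le C\,\theta_\Sigma(y,\rho)$: instead it uses a pigeonhole/mean-value argument on the integral $\int_0^{R_x}\theta_{B_{R_x}(x)}(r)\,r^{-1}dr$ to \emph{select} a sequence of radii $r_{x,k}\in(R_x C^{-(k+1)/2},R_x C^{-k/2})$ at which $\theta_{B_{R_x}(x)}(r_{x,k})$ is bounded by a constant times the integral of $\theta/r$ over that geometric interval, giving $\sum_k\theta_{B_{R_x}(x)}(r_{x,k})<\infty$ directly. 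This bypasses the pointwise-decay question entirely and verifies the $(RPC)$ condition of Definition~\ref{def:RPC} (where one only needs control at a prescribed geometric sequence of scales), after which Lemma~\ref{lem:RPCisC1} finishes. You instead aim at item~(3) of Theorem~\ref{thm:c1regular}, which requires genuine Reifenberg-flatness with vanishing constant, i.e.\ $\theta(r)\to 0$ for \emph{every} $r\to 0$; to get that from mere integrability of $\theta/r$ you do need your almost-monotonicity step, and your diagnosis of that as the delicate point is exactly right. For the record it is true, but the sketch ``test the optimal plane at scale $\rho$ against $\Sigma\cap B_r(y)$'' is incomplete: the Hausdorff distance is two-sided, and the direction $\dist(w,\Sigma\cap B_r(y))$ for $w$ near $\partial B_r(y)$ requires a radial contraction of $w$ inside $y+L$ before appealing to the approximation at scale $\rho$ (otherwise the nearby $\Sigma$-point may fall in $B_\rho\setminus B_r$). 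After that, your Steps 2--3 (telescoping via the analogue of Lemma~\ref{lem:reifenbergwinkel}, and uniformity in the base point coming from the fact that $\theta_{B_{R_x}(x)}$ is already a supremum over $y\in\Sigma\cap B_{R_x}(x)$) match the paper's telescoping argument for the planes. In short: same plane-comparison lemma, same appeal to the equivalence theorem, but you pay for the route through condition~(3) by having to prove the almost-monotonicity that the paper's scale-selection trick renders unnecessary.
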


Note that the dimension $m$ is encoded in the definition of the $\theta$-numbers; see Definition \ref{def:Reifenberg-flat}. Moreover, $\Sigma$ is not explicitly claimed to be Reifenberg-flat in Theorem \ref{thm:intcondition}, but the finite integral will ensure that $\Sigma$ is Reifenberg-flat with vanishing constant. %For details and the proof see section \ref{sec:intcon_proof}. 
Nevertheless, Theorem \ref{thm:intcondition} does not yet yield a 
characterization for $C^1$-submanifolds, since there are graphs of $C^1$-functions leading to an infinite integral. For example, let $u \colon (-1/2,1/2) \to \R$ be defined by
 \[ u(x)= \left \vert \int \limits_0^x \left(- \frac 2 {\log(y^2)}\right) \ dy \right \vert \Foa x \in \left( - \frac 1 2, \frac 1 2\right),  \]
then $u$ is of class $C^1$ on $(-1/2,1/2)$ and can be extended to a function $\tilde u \in C^1(\R)$. But $\Sigma:= \operatorname{graph}(\tilde u)\subset \R^2$ does \textit{not} satisfy the integral condition in Theorem \ref{thm:intcondition} as shown in detail in Appendix \ref{sec:example_proof}. Moreover, for every fixed $\alpha,\beta>0$ minor modifications of $u$ lead to a $C^1$-submanifold with 
 \[ \int \limits_0^{R_x} \frac {\theta^\beta_{ B_{R_x}(x)}(r)} {r^\alpha} \ dr = \infty.\]
A characterization for $C^1$-submanifolds using the condition of Reifenberg-flatness needs to allow $\theta$-numbers and the scale $r$ to decay more independently. 
Roughly speaking, a closed $\Sigma\subset \R^n$ is a $C^1$-submanifold, if and only if there exists a sequence of radii tending to zero, with controlled decay, such that $\Sigma$ satisfies the estimate for Reifenberg-flatness at these scales and the planes 
approximating $\Sigma$ converge to a limit-plane. We call this condition $(RPC)$ and the precise definition is as follows.

\begin{defin}[\textbf{R}eifenberg-\textbf{P}lane-\textbf{C}onvergence] \label{def:RPC}
 For $1\leq m <n$, we say $\Sigma \subset \R^n$ satisfies \textit{the condition $(RPC)$ with dimension $m$} if the following holds:\\
 For all $x \in \Sigma$ there exist a radius $R_x>0$, a sequence $(r_{x,i})_{i \in \N} \subset (0,R_x]$ and a constant $C_x >1$ with
  \[ r_{x,i+1} < r_{x,i} \leq C_x r_{x,i+1} \Foa i \in \N \AND \lim_{i \to \infty} r_{x,i}=0. \]
 Furthermore, there exist two sequences $(\delta_{x,i})_{i \in \N}, (\varepsilon_{x,i})_{i \in \N} \subset (0,1]$, both converging to zero, such that for all $y \in \Sigma \cap B_{R_x}(x)$ there exist planes $P(y,r_{x,i}), P_y \in G(n,m)$ with
  \[ \dist_{\HM}\Big(\Sigma \cap B_{r_{x,i}}(y), \big(y+P(y, r_{x,i})\big) \cap B_{r_{x,i}}(y) \Big) \leq \delta_{x,i}r_{x,i}\]
 and
  \[ \sphericalangle\big(P(y,r_{x,i}),P_y\big) \leq \varepsilon_{x,i}. \]
\end{defin}

Notice that the Grassmannian $G(n,m)$ equipped with the angle-metric is compact (see Definition \ref{def:metric}), so that every sequence of $m$-planes contains a converging subsequence, but the relation between the approximating planes $P(y,r_{x,i})$ and the scale $r_{x,i}$ is crucial in Definition \ref{def:RPC}. 
Notice also that $(RPC)$ does not explicitly claim that the set is Reifenberg-flat, since the approximation of $\Sigma$ is postulated only for a specific sequence of radii. 
Nevertheless, we show that $(RPC)$ is actually equivalent to Reifenberg-flatness with vanishing constant and uniformly converging approximating planes.\\
Here is our main result.

\begin{thm} \label{thm:c1regular} 
For a closed $\Sigma \in \R^n$ is equivalent:
 \begin{enumerate}
  \item $\Sigma$ satisfies $(RPC)$ with dimension $m$
  \item $\Sigma$ is an embedded, $m$-dimensional $C^1$-submanifold of $\R^n$
  \item$\Sigma$ is Reifenberg-flat with vanishing constant, and for all compact subsets $K \subset \Sigma$ and all $x \in K$ there exists an $m$-plane $L_x \in G(n,m)$ 
  such that
   \[ \sup_{x \in K} \sphericalangle \big( L(x,r),L_x \big)\xrightarrow[r \to 0]{}0, \]
  for all $L(x,r)\in G(n,m)$ with
   \[\sup_{x \in K} \frac 1 r \dist_{\HM}\Big(\Sigma \cap B_r(x),\big(x+ L(x,r)\big) \cap B_r(x)\Big) \xrightarrow[r \to 0]{} 0 \]
 \end{enumerate}
\end{thm}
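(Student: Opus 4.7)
The plan is to prove the cycle $(2) \Rightarrow (3) \Rightarrow (1) \Rightarrow (2)$. For $(2) \Rightarrow (3)$: if $\Sigma$ is an embedded $C^1$-submanifold, set $L_x := T_x\Sigma$. The tangent map $x \mapsto L_x$ is continuous on $\Sigma$ and hence uniformly continuous on any compact $K \subset \Sigma$. Locally $\Sigma$ is the graph of a $C^1$-function over $x + L_x$ whose derivative vanishes at $x$, so the Hausdorff distance entering $\theta_\Sigma(x,r)$ is $o(r)$ uniformly in $x \in K$, giving Reifenberg-flatness with vanishing constant. For any family of approximating planes $L(x,r)$ satisfying the uniform hypothesis in (3), both $L(x,r)$ and $L_x$ approximate $\Sigma \cap B_r(x)$ up to error $o(r)$, and a Grassmannian triangle argument (two planes whose local pieces agree up to a small error must form a small angle) forces $\sphericalangle(L(x,r), L_x) \to 0$ uniformly on $K$.

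For $(3) \Rightarrow (1)$ I fix $x_0 \in \Sigma$, pick $R_{x_0}>0$ so that (3) applies on $K := \Sigma \cap \overline{B_{R_{x_0}}(x_0)}$, and set the geometric sequence $r_{x_0,i} := 2^{-i}R_{x_0}$, so that the comparability constant $C_{x_0}=2$ suffices. For each $y \in K$ and each $i$ I choose an almost-optimal plane $P(y,r_{x_0,i})$ for $\theta_\Sigma(y,r_{x_0,i})$, and let $\delta_{x_0,i}$ be the uniform bound over $y \in K$, which tends to zero by the vanishing-constant assumption. Setting $P_y := L_y$ from (3) and $\varepsilon_{x_0,i} := \sup_{y \in K} \sphericalangle(P(y,r_{x_0,i}), L_y) \to 0$ then completes the verification of $(RPC)$.

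The main content is $(1) \Rightarrow (2)$, which I would decompose into three steps. First, I show that $(RPC)$ alone implies Reifenberg-flatness with vanishing constant: for arbitrary $r \in (0,R_x]$ I locate $i$ with $r_{x,i+1} < r \leq r_{x,i}$ and use the approximating plane at scale $r_{x,i}$ inside the smaller ball $B_r(y)$; the bound $r_{x,i} \leq C_x r_{x,i+1}$ produces only a bounded multiplicative loss, so that $\theta_\Sigma(y,r)$ is controlled by a constant multiple of $\delta_{x,i}$ and vanishes uniformly on $\Sigma \cap B_{R_x/2}(x)$. Second, I show that $y \mapsto P_y$ is continuous on $\Sigma \cap B_{R_x/2}(x)$: for $y,z$ close and $i$ large enough that $|y-z| \leq r_{x,i}/2$, the planes $P(y,r_{x,i})$ and $P(z,r_{x,i})$ both approximate a common portion of $\Sigma$ inside a shared ball, which forces their angle to be small in terms of $\delta_{x,i}$ and $|y-z|/r_{x,i}$; letting $i \to \infty$ and using $\varepsilon_{x,i} \to 0$ yields $\sphericalangle(P_y,P_z) \to 0$ as $y \to z$. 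Third, I invoke Reifenberg's Topological Disk Theorem to obtain a local topological-disk parameterization of $\Sigma$ over $x + P_x$; the uniform vanishing of the $\theta$-numbers upgrades it to a Lipschitz graph of arbitrarily small Lipschitz constant, the simultaneous pinching of $\Sigma$ between $y + P(y,r_{x,i})$ and $y + P_y$ identifies $P_y$ as the classical tangent plane at $y$, and continuity of $y \mapsto P_y$ finally promotes the Lipschitz graph to a $C^1$-graph.

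The chief difficulty is this third step: extracting genuine pointwise differentiability together with continuity of the tangent-plane map from an a priori only discrete-scale hypothesis. The scale-comparability constant $C_x$ is the essential technical ingredient, since without it the sequence $r_{x,i}$ could either collapse or spread fast enough that the pinching widths $\delta_{x,i} r_{x,i}$ would fail to dominate the gaps between successive approximating planes, destroying both the vanishing of $\theta$-numbers at intermediate scales and the control needed to identify $P_y$ with the classical tangent plane.
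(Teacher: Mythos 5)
Your overall cycle $(2)\Rightarrow(3)\Rightarrow(1)\Rightarrow(2)$ is a legitimate organization, and your sketches of $(2)\Rightarrow(3)$ and $(3)\Rightarrow(1)$ are sound and close in spirit to what the paper does (the paper instead proves $(1)\Leftrightarrow(3)$ directly via Lemma~\ref{lem:Reifenbergaequ} and then $(1)\Rightarrow(2)$, $(2)\Rightarrow(1)$ separately). Steps one and two of your $(1)\Rightarrow(2)$ --- deducing Reifenberg-flatness with vanishing constant from $(RPC)$, and proving continuity of $y\mapsto P_y$ --- also match the paper's Lemmas~\ref{lem:reifenbergPy} and~\ref{lem:pyx=pyz}.

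The gap is in your third step of $(1)\Rightarrow(2)$. You write that Reifenberg's Topological Disk Theorem gives a local parameterization and that ``the uniform vanishing of the $\theta$-numbers upgrades it to a Lipschitz graph of arbitrarily small Lipschitz constant.'' That claim is false on its own: vanishing $\theta$-numbers give a $C^{0,\alpha}$-parameterization for each $\alpha<1$, not a Lipschitz one, and the paper's Appendix~\ref{sec:example_notC1} exhibits exactly the obstruction --- the graph of $u(z)=\sum_{k\geq1}2^{-k}k^{-1/2}\cos(2^k z)$ is Reifenberg-flat with vanishing constant yet is not even of bounded variation, hence not locally a Lipschitz graph. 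The Lipschitz (indeed graph) structure over $x+P_x$ cannot come from the vanishing $\theta$-numbers alone; it has to come from the additional plane-convergence datum of $(RPC)$. Concretely, the paper's route is the pinching estimate of Lemma~\ref{lem:abstandebene}, $\dist(z,y+P_y)\leq w_x(|z-y|)\,|z-y|$ with $w_x(r)\to0$, which exploits \emph{both} $\delta_{x,i}$ and $\varepsilon_{x,i}$; combined with the angle bound of Lemma~\ref{lem:pyx=pyz} this yields the injectivity estimate $|\pi_{P_x}^\perp(y-y')|<\tfrac12|y-y'|$ on $\Sigma\cap B_r(x)$, so that $\pi_{x+P_x}$ restricted to $\Sigma$ is injective and (together with the surjectivity Lemma~\ref{lem:projectionsurj}, a degree argument, not the full Disk Theorem) produces the graph function. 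Only then is differentiability extracted, and $Df(z)=(\pi_{P_x\mid P_{F(z)}})^{-1}-\mathrm{id}$, with continuity following from that of $y\mapsto P_y$. So the logical order in your step three must be reversed: use the plane convergence to get the graph first, then identify $P_y$ as the tangent plane; the $\theta$-vanishing on its own cannot carry the Lipschitz step.
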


As one can expect intuitively, in this case $P_x$ from condition $(RPC)$ and $L_x$ will coincide with the tangent plane $T_x\Sigma$.

\vspace{\baselineskip}
In Section \ref{sec:projection} we will review some basic facts about the Grassmannian and about orthogonal projections onto linear as well as onto affine subspaces of $\R^n$. 
Section \ref{sec:regularity} is dedicated to the proof of the main theorem and finally, in Section \ref{sec:intcon_proof} we will prove that the condition of Theorem \ref{thm:intcondition} is sufficient to obtain an embedded $C^1$-submanifold. The detailed structure of the examples mentioned in the introduction is presented in the appendix as well as the proofs of two technical lemmata 
%%%%%%%%%%%%%%%%%%%%%%%%%%%%%%%%%%%%%%%%%%%%%%%%%%%%%%%%%%%%%%%%%%%%%%%%%%%%%%%%%%%%%%%%%%%%%%%%%%

\section{\bf{Projections and preparations}} \label{sec:projection}

The aim of this section is to introduce all needed definitions and properties for linear and affine spaces, as well as for the projections onto those planes. 

\begin{defin} \label{def:grassmannian}
 For $n,m \in \N$ with $m\leq n$, the \textit{Grassmannian $G(n,m)$} denotes the set of all $m$-dimensional linear subspaces of $\R^n$.
\end{defin}

\begin{defin} \label{def:projection}
 For $P \in G(n,m)$, the orthogonal projection of $\R^n$ onto $P$ is denoted by $\pi_P$. Further $\pi_P^\perp:= id_{\R^n} - \pi_P$ shall denote 
 the orthogonal projection onto the linear
 subspace perpendicular to $P$.
\end{defin}

Using orthogonal projections it is possible to define a distance between two elements of $G(n,m)$.

\begin{defin} \label{def:metric}
 For two planes $P_1,P_2 \in G(n,m)$ the \textit{included angle} is defined by
  \[ \sphericalangle(P_1,P_2):= \Vert \pi_{P_1}-\pi_{P_2} \Vert := \sup_{x \in \S^{n-1}} \vert \pi_{P_1}(x) -\pi_{P_2}(x)\vert. \]
 The angle $\sphericalangle(\cdot,\cdot)$ is a metric on the Grassmannian $G(n,m)$.
\end{defin}

Together with this metric, the Grassmannian $(G(n,m), \sphericalangle(\cdot,\cdot))$ is a compact manifold. The following lemma allows to use different useful presentations for 
the angle between two planes.

\begin{lem}[8.9.3 in \cite{allard_1972}] \label{lem:metric}
 Let $P_1,P_2 \in G(n,m)$, then
  \[ \Vert \pi_{P_1}-\pi_{P_2} \Vert = \Vert \pi_{P_1}^\perp - \pi_{P_2}^\perp \Vert
  =  \Vert \pi_{P_1}^\perp \circ \pi_{P_2} \Vert = \Vert \pi_{P_1} \circ \pi_{P_2}^\perp \Vert= \Vert \pi_{P_2}^\perp \circ \pi_{P_1}\Vert 
  =  \Vert \pi_{P_2} \circ \pi_{P_1}^\perp \Vert.\]
\end{lem}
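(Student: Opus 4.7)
My plan is to reduce the six norms to a single common value via a short chain of elementary identities using that orthogonal projections are self-adjoint idempotents, and to isolate the one step that genuinely requires $\dim P_1 = \dim P_2 = m$.

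The first equality is immediate: $\pi_{P_i}^\perp = \operatorname{Id}_{\R^n} - \pi_{P_i}$ gives $\pi_{P_1}^\perp - \pi_{P_2}^\perp = -(\pi_{P_1}-\pi_{P_2})$, so the two operator norms agree. Next, since orthogonal projections are self-adjoint, $(\pi_{P_2}^\perp\circ\pi_{P_1})^{*} = \pi_{P_1}\circ\pi_{P_2}^\perp$ and analogously $(\pi_{P_1}^\perp\circ\pi_{P_2})^{*} = \pi_{P_2}\circ\pi_{P_1}^\perp$. Because an operator and its adjoint share the same norm, the six quantities collapse to essentially three, namely $\|\pi_{P_1}-\pi_{P_2}\|$, $\|\pi_{P_2}^\perp\circ\pi_{P_1}\|$, and $\|\pi_{P_1}^\perp\circ\pi_{P_2}\|$.

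The structural heart of the argument is a Pythagorean splitting. For any $x\in\R^n$ I would decompose $x = \pi_{P_1}(x) + \pi_{P_1}^\perp(x)$ and verify
\[
(\pi_{P_1}-\pi_{P_2})(x) = \pi_{P_2}^\perp\!\circ\pi_{P_1}(x) - \pi_{P_2}\!\circ\pi_{P_1}^\perp(x).
\]
The two summands lie in the mutually orthogonal subspaces $P_2^\perp$ and $P_2$, whence
\[
|(\pi_{P_1}-\pi_{P_2})(x)|^{2} = |\pi_{P_2}^\perp\!\circ\pi_{P_1}(x)|^{2} + |\pi_{P_2}\!\circ\pi_{P_1}^\perp(x)|^{2}.
\]
Restricting to $x \in P_1 \cap \S^{n-1}$ (respectively $x \in P_1^\perp \cap \S^{n-1}$) instantly yields the lower bounds $\|\pi_{P_1}-\pi_{P_2}\| \geq \|\pi_{P_2}^\perp\circ\pi_{P_1}\|$ and $\|\pi_{P_1}-\pi_{P_2}\| \geq \|\pi_{P_2}\circ\pi_{P_1}^\perp\|$.

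The hardest step, and the only one that uses the common dimension $m$, is the cross-equality $\|\pi_{P_2}^\perp\circ\pi_{P_1}\| = \|\pi_{P_1}^\perp\circ\pi_{P_2}\|$. For this I would apply $\|A\|^{2} = \|A^{*}A\|$ to rewrite the two sides as $\|\pi_{P_1} - \pi_{P_1}\pi_{P_2}\pi_{P_1}\|$ and $\|\pi_{P_2} - \pi_{P_2}\pi_{P_1}\pi_{P_2}\|$. Setting $T := \pi_{P_2}|_{P_1}\colon P_1 \to P_2$, whose adjoint is $T^{*} = \pi_{P_1}|_{P_2}\colon P_2 \to P_1$, these become $\|\operatorname{Id}_{P_1} - T^{*}T\|$ and $\|\operatorname{Id}_{P_2} - TT^{*}\|$. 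Since $T^{*}T$ and $TT^{*}$ are self-adjoint square operators of the same size $m$, they share their characteristic polynomial and hence have identical spectra, so their operator norms coincide, and so do those of $\operatorname{Id} - T^{*}T$ and $\operatorname{Id} - TT^{*}$. Denoting the resulting common value by $c$, the master identity combined with $|\pi_{P_2}^\perp\pi_{P_1}(x)| \leq c\,|\pi_{P_1}(x)|$ and $|\pi_{P_2}\pi_{P_1}^\perp(x)| \leq c\,|\pi_{P_1}^\perp(x)|$ gives $|(\pi_{P_1}-\pi_{P_2})(x)|^{2} \leq c^{2}|x|^{2}$, which closes the remaining inequality and completes the chain.
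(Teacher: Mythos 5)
The paper cites Allard's Lemma 8.9.3 without reproducing a proof, so there is no internal argument against which to compare your proposal; I therefore assess it on its own merits. Your argument is correct and complete. The three pillars all hold: the adjoint step collapses the six norms to three because orthogonal projections are self-adjoint and $\Vert A^*\Vert = \Vert A\Vert$; the decomposition
\[
(\pi_{P_1}-\pi_{P_2})(x) \;=\; \pi_{P_2}^\perp\pi_{P_1}(x) \;-\; \pi_{P_2}\pi_{P_1}^\perp(x)
\]
is an algebraic identity whose two terms lie in $P_2^\perp$ and $P_2$ respectively, so the Pythagorean relation is valid; and the cross-equality via the $C^*$-identity $\Vert A\Vert^2 = \Vert A^*A\Vert$ together with the fact that $T^*T$ and $TT^*$ share their spectrum (and, because $\dim P_1=\dim P_2=m$, the full multiplicity at $0$, hence the full characteristic polynomial) is exactly the right use of the equal-dimension hypothesis. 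The final sandwich, combining the lower bounds from testing on $P_1\cap\S^{n-1}$ and $P_1^\perp\cap\S^{n-1}$ with the upper bound $\vert(\pi_{P_1}-\pi_{P_2})(x)\vert^2\leq c^2(\vert\pi_{P_1}(x)\vert^2+\vert\pi_{P_1}^\perp(x)\vert^2)=c^2\vert x\vert^2$, closes the argument cleanly. One small remark worth keeping in mind when you present this: in the step $\vert\pi_{P_2}^\perp\pi_{P_1}(x)\vert\leq c\,\vert\pi_{P_1}(x)\vert$ you are implicitly using $\pi_{P_1}^2=\pi_{P_1}$ to rewrite $\pi_{P_2}^\perp\pi_{P_1}(x)=\pi_{P_2}^\perp\pi_{P_1}(\pi_{P_1}x)$ before applying the operator norm; it would be good to say so explicitly, as otherwise the inequality as written bounds by $c\vert x\vert$ rather than $c\vert\pi_{P_1}(x)\vert$.
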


Citing the first part of Lemma 2.2 in \cite{kolasinski-strzelecki-vdm_2015} we get 

\begin{lem} \label{lem:projectioninv}
 Assume $P_1,P_2 \in G(n,m)$. If $\sphericalangle(P_1,P_2)<1$, then the projection $\pi_{P_1\mid P_2} \colon P_2 \to P_1$ is a linear isomorphism.
\end{lem}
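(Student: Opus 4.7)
The plan is to show injectivity of the map $\pi_{P_1|P_2}\colon P_2 \to P_1$ and then conclude by dimensions. Linearity is immediate since $\pi_{P_1|P_2}$ is the restriction of the linear map $\pi_{P_1}$ to the subspace $P_2$, and its image lies in $P_1$ by definition of $\pi_{P_1}$. Since $\dim P_1 = \dim P_2 = m$, the rank-nullity theorem tells us that $\pi_{P_1|P_2}$ is an isomorphism if and only if its kernel is trivial, so the entire argument reduces to ruling out non-trivial vectors $v \in P_2$ with $\pi_{P_1}(v) = 0$.

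To do this, I would first observe that $\pi_{P_1}(v) = 0$ is equivalent to $v \in P_1^\perp$, i.e. $\pi_{P_1}^\perp(v) = v$. At the same time, since $v \in P_2$, one has $\pi_{P_2}(v) = v$. Combining these two identities yields
\[ v \;=\; \pi_{P_1}^\perp(v) \;=\; \pi_{P_1}^\perp\bigl(\pi_{P_2}(v)\bigr) \;=\; \bigl(\pi_{P_1}^\perp \circ \pi_{P_2}\bigr)(v).\]
Now I would invoke Lemma \ref{lem:metric}, which identifies $\sphericalangle(P_1,P_2) = \|\pi_{P_1}^\perp \circ \pi_{P_2}\|$, so that the above chain of equalities gives the norm estimate $|v| \leq \sphericalangle(P_1,P_2)\,|v|$. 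Under the hypothesis $\sphericalangle(P_1,P_2) < 1$, this forces $v = 0$, which establishes injectivity.

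The main (and essentially only) obstacle is identifying the right reformulation of $\sphericalangle(P_1,P_2)$, namely the one from Lemma \ref{lem:metric} expressing it as $\|\pi_{P_1}^\perp \circ \pi_{P_2}\|$; once this is available, the argument is a one-line computation exploiting the fact that a vector $v$ in $P_2 \cap P_1^\perp$ is fixed by both projections appearing in this composition. After injectivity is proven, the final step is just the dimension-count conclusion via rank-nullity, which requires no further work.
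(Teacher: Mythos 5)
Your proof is correct. Since the paper does not give its own proof of this lemma but simply cites it from Kolasi\'nski--Strzelecki--von der Mosel, there is no in-text argument to compare against; your argument via Lemma \ref{lem:metric} is the standard and clean one: a nonzero $v\in P_2\cap P_1^\perp$ would be fixed by $\pi_{P_1}^\perp\circ\pi_{P_2}$, forcing $\|\pi_{P_1}^\perp\circ\pi_{P_2}\|\ge 1$, contradicting $\sphericalangle(P_1,P_2)<1$, and rank-nullity then upgrades injectivity to an isomorphism since $\dim P_1=\dim P_2=m$.
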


Although we use linear spaces most of the time, it is also necessary to define projections onto affine spaces and the angles between those.

\begin{defin} \label{def:projectionaffine}
 For $x \in \R^n$ and $P \in G(n,m)$, the orthogonal projection onto $Q:=x+P$ and the corresponding perpendicular plane are defined by
  \[\pi_Q(z) := x + \pi_P(z-x) \]
 and
  \[\pi_Q^\perp(z) = z- \pi_Q(z) =(z-x)-\pi_P(z-x) = \pi_P^\perp(z-x).\]
 Moreover, for $x_1,x_2 \in \R^n$ and $P_1,P_2 \in G(n,m)$ the angle between $Q_1:=x_1+P_1$ and $Q_2:=x_2 + P_2$ is defined as
  \[ \sphericalangle(Q_1,Q_2) := \sphericalangle(P_1,P_2). \]
\end{defin}

For a smooth function's graph, \cite[8.9.5]{allard_1972} leads to an estimate for the angle between tangent spaces.

\begin{lem} \label{lem:metricinequ}
 Let $\alpha \geq 0$, $P \in G(n,m)$ and assume $f \in C^1(P,P^\perp)$ satisfies $\Vert f' \Vert \leq \alpha$ and $f'(0)=0$. 
 Let $g(x) := x+f(x)$ and $\Sigma := g(P)$ be the graph of $f$, then for all $x,y \in P$ the following estimates hold:
  \[ \Vert \pi_{T_{g(y)}\Sigma} - \pi_{T_{g(x)}\Sigma} \Vert \leq \Vert f'(x) - f'(y) \Vert \leq \sqrt{\frac {1+\alpha^2}{1-\alpha^2} } \Vert \pi_{T_{g(y)}\Sigma} - \pi_{T_{g(x)}\Sigma} \Vert\]
\end{lem}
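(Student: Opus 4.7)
The plan is to work in the graph parametrization. Writing $A := f'(x)$, $B := f'(y)$, and decomposing $\R^n = P \oplus P^\perp$, the tangent spaces become $T_x := T_{g(x)}\Sigma = \{(v, Av) : v \in P\}$ and analogously $T_y = \{(v, Bv) : v \in P\}$. The hypothesis $\Vert f'\Vert \leq \alpha$ translates to $\Vert A\Vert, \Vert B\Vert \leq \alpha$, whereas $f'(0)=0$ plays no role in this pointwise estimate.

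For the first inequality I would use Lemma \ref{lem:metric} to rewrite $\Vert \pi_{T_x}-\pi_{T_y}\Vert$ as $\Vert \pi_{T_y}^\perp \circ \pi_{T_x}\Vert$. Given a unit $z \in \R^n$, the image $\pi_{T_x} z = (v, Av)$ satisfies $\vert v\vert \leq 1$. The companion vector $(v, Bv)$ lies in $T_y = \ker \pi_{T_y}^\perp$, so the decomposition $(v, Av) = (v, Bv) + (0, (A-B)v)$ and contractivity of $\pi_{T_y}^\perp$ yield $\vert \pi_{T_y}^\perp\pi_{T_x} z\vert \leq \vert (A-B)v\vert \leq \Vert A-B\Vert$. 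Taking the supremum over $z$ gives the bound.

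For the reverse inequality I would fix a unit $v \in P$ and set $z := (v, Av) \in T_x$, so $\vert z\vert \leq \sqrt{1+\alpha^2}$. Since $\pi_{T_x}^\perp z = 0$, Lemma \ref{lem:metric} produces $\vert \pi_{T_y}^\perp z\vert = \vert (\pi_{T_y}^\perp-\pi_{T_x}^\perp)z\vert \leq \Vert \pi_{T_x}-\pi_{T_y}\Vert \cdot \sqrt{1+\alpha^2}$, and the companion-vector trick again shows $\pi_{T_y}^\perp z = \pi_{T_y}^\perp(0, (A-B)v)$. To recover $\vert (A-B)v\vert$ on the left I would establish the lower bound $\vert \pi_{T_y}^\perp(0, w)\vert \geq \vert w\vert/\sqrt{1+\alpha^2}$ for $w \in P^\perp$. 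This follows by solving the normal equations to get $\pi_{T_y}(0,w)=(v',Bv')$ with $(I + B^TB)v' = B^Tw$, applying Pythagoras in the form $\vert \pi_{T_y}^\perp(0,w)\vert^2 = \vert w\vert^2 - \vert \pi_{T_y}(0,w)\vert^2$, and then using the spectral theorem on $BB^T$ (whose operator norm is bounded by $\alpha^2$). Chaining the estimates yields $\vert (A-B)v\vert \leq (1+\alpha^2)\Vert \pi_{T_x}-\pi_{T_y}\Vert$, and the elementary inequality $1+\alpha^2 \leq \sqrt{(1+\alpha^2)/(1-\alpha^2)}$ for $\alpha \in [0,1)$ (equivalent to $1 - \alpha^4 \leq 1$) upgrades the prefactor to the claimed one.

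The principal obstacle is the lower bound $\vert \pi_{T_y}^\perp(0, w)\vert \geq \vert w\vert/\sqrt{1+\alpha^2}$: this is where the bound $\Vert f'\Vert \leq \alpha$ is decisive, because it controls the spectrum of $BB^T$ and thus limits how much of a purely $P^\perp$-directed vector the plane $T_y$ can absorb into itself. Everything else reduces to repeated use of Lemma \ref{lem:metric} combined with the companion-vector trick that exploits the common $P$-component of the graph parametrization.
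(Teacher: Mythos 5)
Your proof is correct. The paper does not actually prove this lemma itself---it simply cites \cite[8.9.5]{allard_1972}---so there is no in-text argument to compare against, but your self-contained derivation works. Writing $A=f'(x)$, $B=f'(y)$ and using the splitting $\R^n=P\oplus P^\perp$, the decomposition $(v,Av)=(v,Bv)+(0,(A-B)v)$ with $(v,Bv)\in T_{g(y)}\Sigma=\ker\pi^\perp_{T_{g(y)}\Sigma}$ is exactly the right device for both directions, and the repeated use of Lemma~\ref{lem:metric} to pass between $\pi_T$ and $\pi^\perp_T$ is legitimate. Your key lower bound is also correct: with $M:=BB^T$, the normal equations give $\vert\pi_{T_{g(y)}\Sigma}(0,w)\vert^2=\langle (I+M)^{-1}Mw,w\rangle$, and since $\Vert M\Vert\le\alpha^2$ and $t\mapsto t/(1+t)$ is increasing, this is $\le\tfrac{\alpha^2}{1+\alpha^2}\vert w\vert^2$, whence $\vert\pi^\perp_{T_{g(y)}\Sigma}(0,w)\vert\ge\vert w\vert/\sqrt{1+\alpha^2}$ by Pythagoras. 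Chaining with $\vert z\vert\le\sqrt{1+\alpha^2}$ yields $\Vert A-B\Vert\le(1+\alpha^2)\,\Vert\pi_{T_{g(x)}\Sigma}-\pi_{T_{g(y)}\Sigma}\Vert$, which is in fact a \emph{sharper} constant than the stated $\sqrt{(1+\alpha^2)/(1-\alpha^2)}$ for $\alpha\in(0,1)$ and, unlike the stated one, remains finite for all $\alpha\ge 0$. (Had you instead used the cruder estimate $\vert\pi_{T_{g(y)}\Sigma}(0,w)\vert\le\vert B^Tw\vert\le\alpha\vert w\vert$, giving $\vert\pi^\perp_{T_{g(y)}\Sigma}(0,w)\vert\ge\sqrt{1-\alpha^2}\,\vert w\vert$, you would have reproduced the stated constant exactly.) You are also right that the hypothesis $f'(0)=0$ plays no role in this pointwise estimate.
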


Lastly there is an estimate for angles between planes, in a more generel setting.

\begin{lem}[Prop. 2.5 in \cite{kolasinski-strzelecki-vdm_2012}]\label{lem:winkelabsch}
 Let $P_1,P_2 \in G(n,m)$ and let $(e_1,\dots,e_m)$ be some orthonormal basis of $P_1$. Assume that for each $i=1,\dots,m$ we have the estimate $\dist(e_i,U)\leq \theta$ 
 for some $\theta \in (0,1/\sqrt 2).$ Then there exists a constant $C_1=C_1(m)$ such that
  \[ \sphericalangle(P_1,P_2)\leq C_1\theta. \]
\end{lem}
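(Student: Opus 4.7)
The plan is to exploit the equivalent presentations of the angle $\sphericalangle(P_1,P_2)$ collected in Lemma \ref{lem:metric}. The hypothesis speaks about the distances $\dist(e_i,P_2)$, which, since $P_2$ is a linear subspace, are nothing but the norms $|\pi_{P_2}^{\perp}(e_i)|$ of the perpendicular components of the basis vectors of $P_1$. This strongly suggests working with the representation
\[
\sphericalangle(P_1,P_2) \,=\, \|\pi_{P_1}-\pi_{P_2}\| \,=\, \|\pi_{P_2}^{\perp}\circ\pi_{P_1}\|
\]
from Lemma \ref{lem:metric}, because $\pi_{P_1}$ can be expanded directly in the orthonormal basis $(e_1,\dots,e_m)$ and $\pi_{P_2}^{\perp}$ is then applied termwise.

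Concretely, I would fix an arbitrary unit vector $x\in\S^{n-1}$ and write $\pi_{P_1}(x)=\sum_{i=1}^{m}\langle x,e_i\rangle\,e_i$. Applying the bounded linear operator $\pi_{P_2}^{\perp}$ and invoking the triangle inequality yields
\[
\bigl|\pi_{P_2}^{\perp}\bigl(\pi_{P_1}(x)\bigr)\bigr|
\;\leq\; \sum_{i=1}^{m} |\langle x,e_i\rangle|\,|\pi_{P_2}^{\perp}(e_i)|
\;\leq\; \theta\sum_{i=1}^{m} |\langle x,e_i\rangle|.
\]
Cauchy-Schwarz combined with the Parseval-type identity $\sum_{i=1}^{m}\langle x,e_i\rangle^{2}=|\pi_{P_1}(x)|^{2}\leq|x|^{2}=1$ gives $\sum_{i=1}^{m}|\langle x,e_i\rangle|\leq\sqrt{m}$, and therefore
\[
\bigl|\pi_{P_2}^{\perp}\bigl(\pi_{P_1}(x)\bigr)\bigr|\;\leq\;\sqrt{m}\,\theta.
\]
Taking the supremum over $x\in\S^{n-1}$ and using the identification above establishes $\sphericalangle(P_1,P_2)\leq\sqrt{m}\,\theta$, which is the claimed bound with the explicit constant $C_1(m)=\sqrt{m}$.

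The whole argument is purely linear-algebraic and I do not anticipate a serious technical obstacle; the main move is spotting the right reformulation from Lemma \ref{lem:metric}. A naive alternative, namely projecting each $e_i$ to $P_2$ and then Gram-Schmidt orthonormalising to obtain a basis of $P_2$ from which one would read off the angle, would genuinely require $\theta<1/\sqrt{2}$ (to keep the projected family linearly independent). In contrast, the approach above never uses that smallness hypothesis; the restriction $\theta<1/\sqrt 2$ is presumably included in the statement because the authors subsequently need $\sphericalangle(P_1,P_2)<1$ (to invoke results like Lemma \ref{lem:projectioninv}), and choosing $C_1(m)$ and $\theta$ accordingly guarantees this.
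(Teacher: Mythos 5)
The paper does not prove this lemma at all: it is imported verbatim as Prop.~2.5 of \cite{kolasinski-strzelecki-vdm_2012}, so there is no in-paper argument to compare yours against, and a self-contained proof is a welcome addition. Your argument is correct. Reading the (evidently mistyped) hypothesis $\dist(e_i,U)\leq\theta$ as $\dist(e_i,P_2)=\vert\pi_{P_2}^\perp(e_i)\vert\leq\theta$, which is how the lemma is actually applied in Lemma \ref{lem:reifenbergwinkel}, the chain
\[
\sphericalangle(P_1,P_2)=\Vert\pi_{P_2}^\perp\circ\pi_{P_1}\Vert
\leq\sup_{x\in\S^{n-1}}\sum_{i=1}^m\vert\langle x,e_i\rangle\vert\,\vert\pi_{P_2}^\perp(e_i)\vert
\leq\sqrt m\,\theta
\]
is sound and yields the explicit constant $C_1(m)=\sqrt m$. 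Be aware, though, that the entire weight of the argument rests on the identity $\Vert\pi_{P_1}-\pi_{P_2}\Vert=\Vert\pi_{P_2}^\perp\circ\pi_{P_1}\Vert$ of Lemma \ref{lem:metric}: your estimate by itself only controls how far $P_1$ sticks out of $P_2$, and it is Allard's identity --- valid precisely because $P_1$ and $P_2$ have the \emph{same} dimension --- that upgrades this one-sided information to the symmetric angle. The proof in the cited source instead follows the route you describe as the ``naive alternative'': it projects the $e_i$ into $P_2$ and checks that the images form an almost-orthonormal spanning family, which is where $\theta<1/\sqrt2$ enters; that version is longer and gives a worse constant but does not presuppose the operator-norm identities. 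Your closing observation that, given Lemma \ref{lem:metric} (and since $\sphericalangle(P_1,P_2)\leq1$ always), the smallness hypothesis on $\theta$ is not needed for the stated conclusion is accurate.
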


%%%%%%%%%%%%%%%%%%%%%%%%%%%%%%%%%%%%%%%%%%%%%%%%%%%%%%%%%%%%%%%%%%%%%%%%%%%%%%%%%%%%%%%%%%%%%%%%%%

\section{\bf{Equivalence of (RPC) and $C^1$-regularity}} \label{sec:regularity}

In this section we prove the main theorem. First we will show that $(RPC)$ is equivalent to Reifenberg-flatness with vanishing constant and a uniform convergence of approximating planes. This allows us to use $(RPC)$ and Reifenberg-flatness to prove that every set, which satisfies $(RPC)$ is an embedded $C^1$-submanifold. 
We will approach this by using a different characterization, namely writing $\Sigma$ locally as the graph of a $C^1$-function. It turns out ,that for an element $x \in \Sigma$ the radius $r$ providing $\Sigma \cap B_r(x)$ can be represented as a graph, can be given depending on the ratio of decay of $\delta_{x,i},\varepsilon_{x,i}$ and $r_{x,i}$.\\
Lastly we will show the other implication, using that the representation as a graph of a smooth function already provides Reifenberg-flatness.

Notice that we will fix the dimension $m$ of a subset $\Sigma \subset \R^n$ and say that $\Sigma$ is a $\delta$-Reifenberg-flat set or satisfies $(RPC)$ without mentioning the dimension.

\begin{lem} \label{lem:abstandebene}
 Assume $\Sigma \subset \R^n$ satisfies (RPC), then for all $x \in \Sigma$ we get
  \[ \dist (z,y+ P_y) \leq w_x(\vert z-y \vert) \cdot \vert z-y \vert \Foa y \in \Sigma \cap B_{R_x}(x) \AND  {z \in \Sigma \cap B_{r_{x,1}}(y)}, \]
 where the function $w_x \colon \R \to \R$ is given by
  \[ w_x(r)= \varepsilon_{x,i} + C_x \delta_{x,i} \Foa r \in (r_{x,i+1},r_{x,i}]. \]
\end{lem}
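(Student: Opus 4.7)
Fix $y \in \Sigma \cap B_{R_x}(x)$ and $z \in \Sigma \cap B_{r_{x,1}}(y)$ with $z \neq y$. Since the radii $r_{x,i}$ are strictly decreasing to zero with $r_{x,1} \geq |z-y|$, there is a unique index $i \in \N$ with $|z-y| \in (r_{x,i+1}, r_{x,i}]$; in particular $z \in \Sigma \cap B_{r_{x,i}}(y)$, so the scale $r_{x,i}$ is the natural one for which to invoke the (RPC)-approximation at $y$.

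Next I would realize the two-step approximation: first approximate $\Sigma$ near $y$ by the scale-$i$ plane, then transfer the estimate to the limit plane $P_y$. Concretely, (RPC) supplies a plane $P(y,r_{x,i})$ with
\[ \dist_{\HM}\Big(\Sigma \cap B_{r_{x,i}}(y),\big(y+P(y,r_{x,i})\big)\cap B_{r_{x,i}}(y)\Big) \leq \delta_{x,i}\,r_{x,i}, \]
so I can pick $w \in y+P(y,r_{x,i})$ with $|z-w| \leq \delta_{x,i}\,r_{x,i}$. Writing $z-y = (w-y) + (z-w)$ and using $\pi_{P_y}^\perp$, the first summand lies in $P(y,r_{x,i})$, so Lemma \ref{lem:metric} yields
\[ |\pi_{P_y}^\perp(w-y)| \leq \|\pi_{P_y}^\perp\circ\pi_{P(y,r_{x,i})}\|\cdot|w-y| = \sphericalangle(P(y,r_{x,i}),P_y)\cdot|w-y|\leq \varepsilon_{x,i}\,|w-y|, \]
while the second satisfies $|\pi_{P_y}^\perp(z-w)|\leq|z-w|\leq \delta_{x,i}\,r_{x,i}$. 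Combining these via the triangle inequality and the crude bound $|w-y|\leq|z-y|+\delta_{x,i}\,r_{x,i}$ gives
\[ \dist(z,y+P_y) \leq \varepsilon_{x,i}\,|z-y| + (1+\varepsilon_{x,i})\,\delta_{x,i}\,r_{x,i}. \]

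The final step is to eliminate $r_{x,i}$ in favour of $|z-y|$, which is exactly why the quasi-geometric condition $r_{x,i}\leq C_x r_{x,i+1}$ is built into (RPC): since $|z-y|>r_{x,i+1}$, one obtains $r_{x,i}<C_x|z-y|$, and substituting (absorbing the harmless factor $1+\varepsilon_{x,i}\leq 2$ into the constant $C_x$ if necessary) delivers the claimed estimate $\dist(z,y+P_y)\leq(\varepsilon_{x,i}+C_x\delta_{x,i})|z-y|=w_x(|z-y|)\cdot|z-y|$. There is no real obstacle beyond bookkeeping; the only point requiring attention is to make sure the vector to which Lemma \ref{lem:metric} is applied genuinely lies in $P(y,r_{x,i})$ (which is why we project $z$ onto $y+P(y,r_{x,i})$ first rather than directly onto $y+P_y$), and to exploit the scale-comparability $r_{x,i}\leq C_x r_{x,i+1}$ at the very end to translate the estimate from the discrete grid $\{r_{x,i}\}$ to arbitrary continuous distances $|z-y|$.
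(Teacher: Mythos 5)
Your approach is the same as the paper's: decompose $\pi_{P_y}^\perp(z-y)$ through the intermediate plane $P(y,r_{x,i})$, bound one piece by the angle $\sphericalangle(P(y,r_{x,i}),P_y)\leq\varepsilon_{x,i}$, bound the other by the Hausdorff-distance estimate $\delta_{x,i}r_{x,i}$, and finally use $r_{x,i}\leq C_x r_{x,i+1}<C_x|z-y|$. The one place where your bookkeeping slips is in choosing $w$ to be an arbitrary point of $y+P(y,r_{x,i})$ within distance $\delta_{x,i}r_{x,i}$ of $z$ and then using the crude bound $|w-y|\leq|z-y|+\delta_{x,i}r_{x,i}$; that extra $(1+\varepsilon_{x,i})$ cannot legitimately be "absorbed into $C_x$," since $C_x$ is the fixed constant supplied by $(RPC)$ and the lemma asserts $w_x(r)=\varepsilon_{x,i}+C_x\delta_{x,i}$ with that exact $C_x$. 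The clean fix is to take $w$ to be the orthogonal projection $w=y+\pi_{P(y,r_{x,i})}(z-y)$: then $|z-w|=|\pi_{P(y,r_{x,i})}^\perp(z-y)|=\dist(z,y+P(y,r_{x,i}))\leq\delta_{x,i}r_{x,i}$ and $|w-y|=|\pi_{P(y,r_{x,i})}(z-y)|\leq|z-y|$, so the parasitic factor disappears. Equivalently (and this is what the paper does), skip $w$ entirely and split at the operator level,
\[
\pi_{P_y}^\perp(z-y)=\bigl(\pi_{P_y}^\perp-\pi_{P(y,r_{x,i})}^\perp\bigr)(z-y)+\pi_{P(y,r_{x,i})}^\perp(z-y),
\]
and bound the first term by $\varepsilon_{x,i}|z-y|$ and the second by $\delta_{x,i}r_{x,i}$ directly.
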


Note that $w_x$ is a piecewise constant function with $\lim_{r \to 0} w_x(r)=0$. It is possible for $w_x$ to be not monotonically decreasing, because 
$(RPC)$ require this neither for $\delta_{x,i}$ nor for $\varepsilon_{x,i}$.

\begin{proof}
 Let $x \in \Sigma$ and $y \in \Sigma \cap B_{R_x}(x)$ be fixed. For $z \in \Sigma \cap B_{r_{x,1}}(y)$ there exists an $i \in \N$ with
 $\vert z-y\vert \in (r_{x,i+1},r_{x,i}]$. This yields
 \begin{align*}
  \dist(z,y+P_y) &= \vert \pi_{P_y}^\perp(z-y)\vert \\
  &\leq \left\vert \left(  \pi_{P_y}^\perp-\pi_{P(y,r_{x,i})}^\perp\right) (z-y) \right\vert + \vert \pi_{P(y,r_{x,i})}^\perp (z-y) \vert\\
  &\leq \varepsilon_{x,i} \vert z-y \vert + \delta_{x,i} r_{x,i}\\
  &\leq \varepsilon_{x,i} \vert z-y \vert + \delta_{x,i}C_x \vert z-y \vert.
 \end{align*}
\end{proof}

The idea of Lemma \ref{lem:winkelabsch} will frequently be used for Reifenberg-flat sets $\Sigma$ 
while $P_1$ and $P_2$ are the approximating planes of Definition \ref{def:Reifenberg-flat} for either different or the same radii and points of $\Sigma$. The following lemma uses Lemma \ref{lem:winkelabsch} to get an estimate in this
setting.

\begin{lem} \label{lem:reifenbergwinkel}
 Let $x_1,x_2 \in \Sigma \subset \R^n$, $0<r_1 \leq r_2$, $ \delta_1, \delta_2 \in (0, \frac 1 2)$ and $P_1,P_2 \in G(n,m)$ be given such that
  \[ \vert x_1 - x_2 \vert <\frac {r_1} 2 \]
 and 
  \[ \dist_{\HM}\Big( \Sigma \cap B_{r_j}(x_j), (x_j+P_j) \cap B_{r_j}(x_j)\Big) \leq \delta_jr_j \Fo j=1,2. \]
 If 
  \[ \frac 2 {1-2\delta_1} \left( \delta_1 + 2\frac {r_2}{r_1} \delta_2\right)< \frac 1 {\sqrt 2}, \]
 then we get
  \[ \sphericalangle(P_1,P_2) \leq C_1 \frac 2 {1-2\delta_1} \left( \delta_1 + 2\frac {r_2}{r_1} \delta_2\right) . \]
\end{lem}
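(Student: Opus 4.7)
The plan is to reduce the claim to Lemma \ref{lem:winkelabsch}. Let $(e_1,\dots,e_m)$ be any orthonormal basis of $P_1$; then it suffices to prove, for each $i$, the bound $\dist(e_i,P_2) = |\pi_{P_2}^\perp(e_i)| \leq \theta$, where
\[ \theta := \frac{2}{1-2\delta_1}\Big(\delta_1 + 2\tfrac{r_2}{r_1}\delta_2\Big), \]
because the hypothesis is precisely $\theta < 1/\sqrt 2$. Lemma \ref{lem:winkelabsch} will then yield $\sphericalangle(P_1,P_2) \leq C_1\theta$, as claimed.

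To estimate $|\pi_{P_2}^\perp(e_i)|$, I plan to use the two Hausdorff bounds in succession at a scale $\rho \in (0,r_1]$ to be chosen. On the $x_1$-side, $x_1+\rho e_i \in (x_1+P_1)\cap B_{r_1}(x_1)$, so by the Hausdorff bound at $x_1$ there is $z_i \in \Sigma \cap B_{r_1}(x_1)$ with $|z_i - x_1 - \rho e_i| \leq \delta_1 r_1$. On the $x_2$-side, provided $z_i$ lies in $B_{r_2}(x_2)$, the Hausdorff bound at $x_2$ gives $|\pi_{P_2}^\perp(z_i-x_2)| \leq \delta_2 r_2$; the same bound applied to $x_1$ itself (which belongs to $\Sigma \cap B_{r_2}(x_2)$ since $|x_1-x_2| < r_1/2 \leq r_2$) yields $|\pi_{P_2}^\perp(x_1-x_2)| \leq \delta_2 r_2$.

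Decomposing $\rho e_i = (z_i-x_2) - (x_1-x_2) - (z_i-x_1-\rho e_i)$ and applying $\pi_{P_2}^\perp$ together with the triangle inequality gives
\[ |\pi_{P_2}^\perp(e_i)| \leq \frac{1}{\rho}\big(2\delta_2 r_2 + \delta_1 r_1\big). \]
The choice of $\rho$ is the crux, and it is where the factor $\tfrac{1}{1-2\delta_1}$ enters. Taking $\rho := (1-2\delta_1)r_1/2$ keeps $x_1+\rho e_i$ inside $B_{r_1}(x_1)$; and from $|z_i - x_1| \leq \rho + \delta_1 r_1 = r_1/2$ together with $|x_1-x_2| < r_1/2$ one obtains $|z_i - x_2| < r_1 \leq r_2$, exactly as needed to activate the second Hausdorff bound. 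Substituting this value of $\rho$ into the previous display produces precisely $\theta$.

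The main obstacle is this balancing act: $\rho$ must be large enough that $1/\rho$ does not blow up, but small enough that the approximant $z_i$ remains inside $B_{r_2}(x_2)$, where the second Hausdorff bound is available. The assumption $r_1 \leq r_2$ alone is too weak to permit the naive choice $\rho = r_1/2$, and the shrinkage factor $(1-2\delta_1)$ is exactly what absorbs the drift $\delta_1 r_1$ of $z_i$ away from $x_1$.
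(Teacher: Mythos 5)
Your proof is correct and is essentially the same argument as the paper's: both proofs place probe points at the crucial radius $\rho = \tfrac{1-2\delta_1}{2}r_1$ along $P_1$, approximate by points of $\Sigma$ which then land inside $B_{r_1/2}(x_1) \subset B_{r_2}(x_2)$, and then invoke Lemma \ref{lem:winkelabsch}. The only difference is cosmetic — you work directly with $\pi_{P_2}^\perp$ rather than constructing explicit approximating vectors $w_i \in x_2+P_2$ and forming the differences $w_i - w_0$ as the paper does, but the three error terms $\delta_1 r_1 + \delta_2 r_2 + \delta_2 r_2$ and the final bound are identical.
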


\begin{proof}
 Let $(e_1,\dots, e_m)$ be an orthonormal basis of $P_1$. Define 
 \begin{align*}
  y_0&:=x_1
  \intertext{and}
  y_i&:=x_1+\frac {1-2\delta_1} 2 r_1 e_i \Fo i=1, \dots,m.
 \end{align*}
 For all $i =1,\dots, m$ there exists a $z_i \in \Sigma \cap B_{r_1}(x_1)$ with
  \[ \vert z_i - y_i \vert \leq r_1 \delta_1. \]
 Note that for $z_0:=y_0=x_0$, the point $z_0$ is also an element of $\Sigma \cap B_{r_1}(x_1) \cap B_{r_2}(x_2)$. Further we get
  \[ \vert z_i -x_1 \vert \leq \vert z_i - y_i \vert + \vert y_i -x_1 \vert \leq r_1 \delta_1 + r_1 \frac {1-2\delta_1} 2= \frac {r_1}2 \Foa i = 1 , \dots, m.  \]
 This leads to
 \begin{align*}
  \vert z_i - x_2 \vert &\leq \vert z_i-x_1 \vert + \vert x_1 -x_2\vert \\
  &< r_1\left( \frac 1 2 + \frac 1 2 \right) \\
  &= r_1 \leq r_2 \Foa i =1, \dots ,m.
  \end{align*}
 Therefore for every $i=0,\dots,m$ there exists a $w_i \in (x_2 +P_2) \cap B_{r_2}(x_2)$ with
  \[ \vert w_i - z_i \vert \leq r_2 \delta_2. \]
 Define $\tilde y_i := y_i-y_0$ and $\tilde w_i := w_i-w_0$ for $i=1 , \dots , m.$ Then $\tilde y_i/\vert\tilde y_i \vert = e_i$ is obviously an orthonormal 
 basis of $P_1$ and $\tilde w_i/\vert \tilde y_i \vert$ is an element of $P_2$. The previous estimates yield
 \begin{align*}
  \left \vert \frac {\tilde y_i} {\vert \tilde y_i \vert } - \frac {\tilde w_i} {\vert \tilde y_i \vert } \right \vert &= \frac 1 {\vert \tilde y_i \vert} 
   \bigg \vert y_i -y_0 -w_i + w_0\bigg \vert \\
  &=\frac 2 {(1-2\delta_1 )r_1} \bigg \vert y_i - z_i + z_0 -y_0 + z_i -w_i + w_0 - z_0 \bigg \vert\\
  &\leq \frac 2 {(1-2\delta_1 )r_1}  (r_1 \delta_1 + 0 + r_2 \delta_2 + r_2 \delta_2)\\
  & \leq \frac 2 {1-2\delta_1 } \left( \delta_1 +2 \frac {r_2}{r_1} \delta_2 \right) \Foa i=1,\dots,m.
 \end{align*}
 This is assumed to be strictly less than $1/\sqrt2$ and therefore Lemma \ref{lem:winkelabsch} leads to
  \[ \sphericalangle(P_1,P_2) \leq C_1(m) \frac 2 {1-2\delta_1 } \left( \delta_1 +2 \frac {r_2}{r_1} \delta_2 \right).\]
\end{proof}

Now we will show that every set satisfying $(RPC)$ is indeed Reifenberg-flat with vanishing constant. Moreover, we will see that $(RPC)$ is an even 
stronger assumption and allows to approximate the set for a fixed point with the same plane at each scale. In fact, we will show the estimation for Reifenberg-flatness only for 
a ball around $x \in \Sigma$. By a covering argument, we later see, that the estimate holds true for all compact subsets of $\Sigma$.

\begin{lem} \label{lem:reifenbergPy}
 Assume $\Sigma \subset \R^n$ satisfies $(RPC)$, then for all $x \in \Sigma$ and $k \geq \tilde k_x$, where $\tilde k_x \in \N$ denotes the index with 
  \[ \delta_{x,k} < \frac 1 {C_x} \Foa k \geq \tilde k_x ,\]
 we get
 \begin{align*}
   \sup \limits_{y \in B_{R_x}(x) \cap \Sigma} \frac 1 r \dist_{\HM}\Big( \Sigma \cap B_r(y),(y+ P_y) \cap B_r(y) \Big) 
  &\leq \sup \limits_{i \geq k} (\varepsilon_{x,i} +2 C_x \delta_{x,i})\\
  &=: \tilde \delta_{x,r} \Foa r \leq r_{x,k}.
 \end{align*}
\end{lem}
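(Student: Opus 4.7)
The plan is to control each of the two inclusions making up the Hausdorff distance separately. Fix $y \in \Sigma \cap B_{R_x}(x)$, $k \geq \tilde k_x$, and $r \leq r_{x,k}$, and let $i \geq k$ be the unique index with $r \in (r_{x,i+1}, r_{x,i}]$; this selects the $(RPC)$-scale on which I will run the second step. Note that the comparability condition $r_{x,i} \leq C_x r_{x,i+1}$ then gives $r_{x,i} < C_x r$, which is the transfer tool between scale $r_{x,i}$ and scale $r$.

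For the inclusion $\Sigma \cap B_r(y) \subset \{\text{neighborhood of }(y+P_y) \cap B_r(y)\}$, Lemma~\ref{lem:abstandebene} does essentially all the work: for $z \in \Sigma \cap B_r(y)$, the orthogonal projection $\pi_{y+P_y}(z)$ is in $(y+P_y) \cap B_r(y)$ (projections onto affine planes are non-expansive) and lies within $(\varepsilon_{x,j}+C_x\delta_{x,j})\,r$ of $z$, where $j \geq k$ is the index with $|z-y| \in (r_{x,j+1}, r_{x,j}]$. This already yields the bound $\sup_{j\geq k}(\varepsilon_{x,j}+C_x\delta_{x,j})\,r$ for this direction.

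For the reverse inclusion, given $p \in (y+P_y) \cap B_r(y)$ I first project onto the $(RPC)$-plane at scale $r_{x,i}$: Lemma~\ref{lem:metric} together with $\sphericalangle(P_y, P(y, r_{x,i})) \leq \varepsilon_{x,i}$ yields $|p - p'| \leq \varepsilon_{x,i}\,|p-y|$ for $p' := y + \pi_{P(y, r_{x,i})}(p-y)$, while $|p' - y| \leq |p-y| \leq r \leq r_{x,i}$. The $(RPC)$-approximation at scale $r_{x,i}$ then provides a point $z \in \Sigma \cap B_{r_{x,i}}(y)$ with $|z - p'| \leq \delta_{x,i} r_{x,i} \leq C_x \delta_{x,i}\,r$.

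The main obstacle is that this $z$ only lives in $B_{r_{x,i}}(y)$, whereas the Hausdorff distance is measured inside $B_r(y)$. I handle this by shrinking $p$ radially toward $y$ whenever $|p-y| > r(1 - C_x \delta_{x,i})$, replacing it by $\tilde p := y + r(1 - C_x \delta_{x,i})(p-y)/|p-y|$ before running the projection/approximation step. The hypothesis $k \geq \tilde k_x$ is used exactly here to ensure $1 - C_x \delta_{x,i} > 0$. By construction $|\tilde p - y| \leq r(1 - C_x \delta_{x,i})$ and $|\tilde p - p| \leq C_x \delta_{x,i}\, r$; the first bound combined with $|z - \tilde p\,'| \leq C_x\delta_{x,i}\,r$ forces $|z - y| \leq r$, so $z \in \Sigma \cap B_r(y)$ as required, and the second is the only extra cost. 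Collecting the three error contributions yields $|z-p| \leq (\varepsilon_{x,i} + 2 C_x \delta_{x,i})\,r$, and the factor of $2$ in front of $C_x\delta_{x,i}$ is precisely the price of keeping $z$ inside $B_r(y)$. Taking the supremum over $y$ and dominating $\varepsilon_{x,i} + 2C_x\delta_{x,i}$ by $\sup_{j \geq k}(\varepsilon_{x,j}+2C_x\delta_{x,j})$ finishes the argument.
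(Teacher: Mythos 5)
Your proof is correct and follows essentially the same path as the paper's: Lemma~\ref{lem:abstandebene} for the inclusion of $\Sigma$ into a neighborhood of $y+P_y$, and for the reverse inclusion a projection onto $P(y,r_{x,i})$ at the scale $r_{x,i}$ determined by $r_{x,i+1}<r\le r_{x,i}$, followed by the RPC approximation. Your radial-shrinking of $p$ by $C_x\delta_{x,i}r$ is just a repackaging of the paper's case split between $(y+P_y)\cap B_{r-r_{x,k}\delta_{x,k}}(y)$ and its annular complement, and it produces the same error budget $\varepsilon_{x,i}+2C_x\delta_{x,i}$.
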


Note that the existence of $\tilde k_x$ is an immidiate result of $\delta_{x,k}$ tending to zero. The value of $\tilde k_x$ and therefore the scale of the approximation depends highly on the point $x \in\Sigma$.

\begin{proof}
 Let $x \in \Sigma$ be fixed, $y \in \Sigma \cap B_{R_x}(x)$ and $z \in \Sigma \cap B_{r}(y)$ for a radius $r \in (0,r_{x,\tilde k_x}]$. Then for $y \neq z$ there exists an 
 $i \in \N$ with $r_{x,i+1} < \vert z-y \vert \leq r_{x,i}$ and Lemma \ref{lem:abstandebene} leads to 
 \begin{align*}
  \frac 1 r \dist\Big(z, (y+P_y) \cap B_{r}(y)\Big) &\leq \frac 1 r w_x(\vert z-y \vert) \cdot \vert z-y \vert\\
  &\leq w_x(\vert z-y \vert) \\
  &= \varepsilon_{x,i} + C_x \delta_{x,i}.
 \end{align*}
 Let $k \in \N$ such that $r_{x,k+1}<r \leq r_{x,k}$, then this implies
  \[ \sup \limits_{z \in \Sigma \cap B_r(y)} \frac 1 r \dist\Big(z, (y+P_y) \cap B_r(y)\Big) \leq \sup_{i\geq k } (\varepsilon_{x,i} + C_x \delta_{x,i}). \]
 Moreover, we have $k\geq \tilde k_x$. Using the definition of $\tilde k_x$ we have
  \[r- r_{x,k} \delta_{x,k} \geq r - r C_x \delta_{x,r} >0 .\]
 For $z \in (y+ P_y) \cap B_{r-r_{x,k}\delta_{x,k}}(y)$ defining
  \[ \tilde z:=y + \pi_{P(y,r_{x,k})} (z-y), \]
 leads to
  \[ \vert \tilde z -y \vert = \vert \pi_{P(y,r_{x,k})}(z-y ) \vert \leq \vert z-y \vert < r - r_{x,k}\delta_{x,k} <r \leq r_{x,k}. \]
 Hence there exists a $w \in \Sigma \cap B_{r_{x,k}}(y)$ with
  \[ \vert \tilde z -w \vert \leq r_{x,k} \delta_{x,k}. \]
 Moreover
  \[ \vert w -y \vert \leq \vert w- \tilde z \vert + \vert \tilde z -y \vert < r_{x,k} \delta_{x,k} + r - r_{x,k}\delta_{x,k} =r  \]
 and therefore $w \in \Sigma \cap B_r(y)$. Using $z-y \in P_y$ and Lemma \ref{lem:metric}, we get
 \begin{align*}
  \dist\Big(z, \Sigma \cap B_{r}(y)\Big) &\leq \vert z - w \vert\\
  &\leq \vert z- \tilde z \vert + \vert \tilde z - w\vert \\
  &= \vert \pi_{P(y,r_{x,k})}^\perp (z-y) \vert + \vert \tilde z -w \vert\\
  &\leq \varepsilon_{x,k} \vert z-y\vert + r_{x,k } \delta_{x,k}\\
  &\leq r\left( \varepsilon_{x,k} + C_x \delta_{x,k} \right).
 \end{align*}
 Now let $z \in (y+ P_y) \cap ( B_r(y) \setminus B_{r-r_{x,k}\delta_{x,k}}(y))$, then there exists a $z'\in (y+P_y) \cap B_{r-r_{x,k}\delta_{x,k}}(y)$ such that
  \[ \vert z' -z\vert < r_{x,k} \delta_{x,k}. \]
 Therefore we get a $w \in \Sigma \cap B_r(y)$ with 
 \begin{align*}
  \vert w - z \vert &\leq \vert w - z' \vert + \vert z' -z \vert \\
   &\leq r\left( \varepsilon_{x,k} + C_x \delta_{x,k} \right) + r_{x,k} \delta_{x,k}\\
   &\leq r\left( \varepsilon_{x,k} + 2 C_x \delta_{x,k} \right).
 \end{align*}
 Finally
 \begin{align*}
  \frac 1 r \dist_{\HM}\Big( \Sigma \cap B_{r}(y) , (y+P_y) \cap B_r(y) \Big) &\leq \max \left\{\sup_{i\geq k} (\varepsilon_{x,i} + C_x \delta_{x,i}), \varepsilon_{x,k} + 2 C_x \delta_{x,k} \right\} \\
  &\leq \sup_{i\geq k } (\varepsilon_{x,i} + 2 C_x \delta_{x,i}),
 \end{align*}
 which is independent of $y \in B_{R_x}(x)$ and implies the postulated statement.
\end{proof}

\begin{rem} \label{rem:tildedelta}
 Note that $\tilde \delta_{x,k}$ is monotonically decreasing and using the convergence of $\delta_{x,i}$ and $\varepsilon_{x,i}$ we get $\tilde \delta_{x,k} \to 0$ as $k \to \infty$. 
 Lemma \ref{lem:reifenbergPy} then implies that $\Sigma$ is a $\delta$-Reifenberg-flat set for all $\delta>0$, i.e. it is Reifenberg-flat with vanishing constant. 
 Moreover, the plane which approximates $\Sigma$ at the point $y \in \Sigma$ with respect to the $\delta$-Reifenberg-flatness can be fixed as $y + P_y$ for all small radii.
\end{rem}

For a set $\Sigma \subset \R^n$ which satisfies $(RPC)$ and $y \in \Sigma$ the plane $P_y$ arises as a limit of planes $P(y,r_{x,i})$. Up to this point, we did 
not mention that these planes might also depend on $x$ and that we should have writen $P_y^x$, but in fact, we are now ready to show, that the $P_y^x$ are the same 
for all $x \in \Sigma$ with $y \in \Sigma \cap B_{R_x}(x)$. Moreover, we get an estimate for the angle between two planes $P_y$ and $P_z$, whenever $z$ is an 
element of $\Sigma \cap B_{R_x}(x)$ with $\vert y-z\vert$ small enough.

\begin{lem} \label{lem:pyx=pyz}
 Assume $\Sigma \subset \R^n$ satisfies $(RPC)$. 
 \begin{enumerate}
  \item  For $x,\tilde x \in \Sigma$ we get
   \[ P_y^x = P_y^{\tilde x} \Foa y \in \Sigma \cap B_{R_x}(x) \cap B_{R_{\tilde x}}(\tilde x). \]
  \item For $x \in \Sigma$, $k \geq \tilde k_x$ and $y,z \in \Sigma \cap B_{R_x}(x)$ with 
  $\vert z-y \vert < \frac {r_{x,k}} 2$ and $\tilde \delta_{x,k} <\frac 1 {11}$ we get 
   \[ \sphericalangle(P_y,P_z) \leq \frac {22} 3 C_1(m) \tilde \delta_{x,k}=:C_2(m)\tilde \delta_{x,k}. \]
 \end{enumerate}
\end{lem}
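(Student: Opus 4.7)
\medskip

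The plan is to derive both parts from the Reifenberg-flat approximation property already proven in Lemma \ref{lem:reifenbergPy}, combined with the general angle estimate of Lemma \ref{lem:reifenbergwinkel}.

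For part (1), fix $y \in \Sigma \cap B_{R_x}(x) \cap B_{R_{\tilde x}}(\tilde x)$ and write, temporarily, $P_y^x$ and $P_y^{\tilde x}$ for the two candidate planes obtained from $(RPC)$ at base points $x$ and $\tilde x$. Lemma \ref{lem:reifenbergPy} applied at the base point $x$ and at the base point $\tilde x$ yields
\[
\frac 1 r \dist_{\HM}\bigl(\Sigma\cap B_r(y),(y+P_y^x)\cap B_r(y)\bigr)\le \tilde\delta_{x,r},\qquad
\frac 1 r \dist_{\HM}\bigl(\Sigma\cap B_r(y),(y+P_y^{\tilde x})\cap B_r(y)\bigr)\le \tilde\delta_{\tilde x,r},
\]
for all sufficiently small $r>0$. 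Setting $x_1=x_2=y$ and $r_1=r_2=r$ in Lemma \ref{lem:reifenbergwinkel} the condition $|x_1-x_2|<r_1/2$ is automatic; for $r$ small enough both $\tilde\delta_{x,r}$ and $\tilde\delta_{\tilde x,r}$ lie below any prescribed threshold, so the smallness hypothesis of Lemma \ref{lem:reifenbergwinkel} holds and we obtain
\[
\sphericalangle(P_y^x,P_y^{\tilde x})\le C_1\,\frac{2}{1-2\tilde\delta_{x,r}}\bigl(\tilde\delta_{x,r}+2\tilde\delta_{\tilde x,r}\bigr).
\]
The left-hand side is independent of $r$ while the right-hand side tends to $0$ as $r\to 0$ by Remark \ref{rem:tildedelta}, which forces $P_y^x=P_y^{\tilde x}$.

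For part (2), I apply Lemma \ref{lem:reifenbergwinkel} with $x_1=y$, $x_2=z$, $P_1=P_y$, $P_2=P_z$, and $r_1=r_2=r_{x,k}$. Since $k\ge \tilde k_x$, Lemma \ref{lem:reifenbergPy} furnishes the two approximation inequalities with common constant $\delta_1=\delta_2=\tilde\delta_{x,k}$, and the hypothesis $|y-z|<r_{x,k}/2$ provides the required separation bound. It remains to check the numerical smallness assumption: with $r_1=r_2$ the relevant quantity equals
\[
\frac{2}{1-2\tilde\delta_{x,k}}\bigl(\tilde\delta_{x,k}+2\tilde\delta_{x,k}\bigr)=\frac{6\,\tilde\delta_{x,k}}{1-2\tilde\delta_{x,k}}.
\]
The assumption $\tilde\delta_{x,k}<1/11$ gives $1-2\tilde\delta_{x,k}>9/11$, hence $\frac{2}{1-2\tilde\delta_{x,k}}\le \frac{22}{9}$ and the expression above is at most $\frac{22}{3}\tilde\delta_{x,k}\le \frac{2}{3}<\frac{1}{\sqrt 2}$, so Lemma \ref{lem:reifenbergwinkel} applies and produces precisely
\[
\sphericalangle(P_y,P_z)\le C_1(m)\,\frac{22}{3}\,\tilde\delta_{x,k}.
\]

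The proof is essentially a bookkeeping exercise once Lemma \ref{lem:reifenbergPy} is in hand; the only delicate point is making sure that the numerical threshold $\tilde\delta_{x,k}<1/11$ is strong enough to simultaneously validate both the hypothesis of Lemma \ref{lem:reifenbergwinkel} and yield the stated constant $22/3$. No further compactness or covering argument is needed because the output of Lemma \ref{lem:reifenbergPy} is already uniform in $y\in\Sigma\cap B_{R_x}(x)$.
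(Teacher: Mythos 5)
Your part (2) is essentially the paper's proof verbatim: same choice $r_1=r_2=r_{x,k}$, $\delta_1=\delta_2=\tilde\delta_{x,k}$, same numerical check that $\tilde\delta_{x,k}<1/11$ gives $\frac{2}{1-2\tilde\delta_{x,k}}\cdot 3\tilde\delta_{x,k}\le\frac{22}{3}\tilde\delta_{x,k}<\frac{1}{\sqrt 2}$, and the same conclusion via Lemma \ref{lem:reifenbergwinkel}.

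Your part (1), however, takes a genuinely different and cleaner route. The paper works directly with the raw $(RPC)$ planes $P(y,r_{x,i})$ and $P(y,r_{\tilde x,k})$: it fixes a target accuracy $\varepsilon$, chooses index thresholds $N_1,N_2$ controlling the $\varepsilon_{x,k}$ and $\delta_{x,k}$ sequences for both base points, then carefully selects indices $k,i$ so that $r_{x,i}\le r_{\tilde x,k}\le r_{x,i-1}$, applies Lemma \ref{lem:reifenbergwinkel} to bound $\sphericalangle(P(y,r_{x,i}),P(y,r_{\tilde x,k}))$, and finally uses a three-term triangle inequality with the limits $P_y^x,P_y^{\tilde x}$. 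You instead invoke Lemma \ref{lem:reifenbergPy} twice (once per base point) to get that \emph{both} limit planes $P_y^x$ and $P_y^{\tilde x}$ are themselves uniform Reifenberg approximants at $y$ at every small scale, then compare them directly via Lemma \ref{lem:reifenbergwinkel} and send $r\to 0$. This bypasses the index-matching between the two radius sequences and the triangle inequality with the limit planes entirely. Your argument is sound (the smallness hypotheses of Lemma \ref{lem:reifenbergwinkel} hold for $r$ small because $\tilde\delta_{x,r},\tilde\delta_{\tilde x,r}\to 0$, and $|x_1-x_2|=0$ here), and it illustrates nicely that once Lemma \ref{lem:reifenbergPy} upgrades the limit plane to a genuine approximating plane, one no longer needs to manipulate the raw $(RPC)$ data; the paper's proof, by contrast, makes the dependence on the $(RPC)$ parameters explicit at each step.
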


\begin{proof}
 \begin{enumerate}[wide, labelwidth=!, labelindent=0pt]
  \item Let $x, \tilde x \in \Sigma$ and $y \in \Sigma \cap B_{R_x}(x) \cap B_{R_{\tilde x}}(\tilde x)$. The sequences $\varepsilon_{x,k}$ and $\varepsilon_{\tilde x , k}$ 
  converge to zero and hence for all $\varepsilon >0$ there exist an $N_1 \in \N$ such that 
   \[ \varepsilon_{x,k},\varepsilon_{\tilde x,k} \leq \frac \varepsilon 3 \Foa k \geq N_1. \]
  Moreover, there exists an $N_2 \in \N$ with $N_2>N_1$ and
   \[ \delta_{x,k} < \min\left\{ \frac \varepsilon {24 C_1}, \frac 1 4 \right\} \AND \delta_{\tilde x, k} < \frac \varepsilon {48 C_1 C_x} \Foa k \geq N_2.  \]
  Define 
  \begin{align*}
   k&:=\begin{cases}
       N_2 &\Fo r_{\tilde x,N_2} \leq r_{x,N_2}, \\
       \min \{ l \in \N \mid \ r_{\tilde x, l } \leq r_{x,N_2} \} &\Fo r_{\tilde x,N_2} > r_{x,N_2},
      \end{cases}\\
   \intertext{and}
   i&:=\min\{l \in \N \mid \ r_{x,l} \leq r_{\tilde x , k}\}.&
  \end{align*}
  Then we have $k,i \geq N_2$ and
   \[ r_{x,i} \leq r_{\tilde x , k } \leq r_{x,i-1}. \]
  Let $\varepsilon$ be sufficiently small, i.e. $\frac \varepsilon {3C_1} < \frac 1 {\sqrt 2}.$ Then
  \begin{align*}
   \frac 2 {1-2\delta_{x,i}} \left( \delta_{x,i} + \frac {r_{\tilde x,k}}{r_{x,i}} \delta_{\tilde x,k} \right) &\leq  4 (\delta_{x,i} + 2 C_x \delta_{\tilde x,k})\\
   &\leq 4 \left( \frac \varepsilon { 24C_1 } + 2 C_x \frac \varepsilon {48C_1C_x} \right) \\
   &= \frac \varepsilon {3C_1} \\
   &< \frac 1 {\sqrt 2}.
  \end{align*}
  Using Lemma \ref{lem:reifenbergwinkel} we get 
  \begin{align*}
   \sphericalangle \big( P(y,r_{x,i}), P(y,r_{\tilde x,k}) \big) &\leq C_1 \frac 2 {1-2\delta_{x,i}} \left( \delta_{x,i} + 2 \frac {r_{\tilde x,k}}{r_{x,i}} \delta_{\tilde x ,k} \right) \\
    &\leq \frac \varepsilon 3.
  \end{align*}
  Finally
  \begin{align*}
   \sphericalangle\left( P_y^x, P_y^{\tilde x} \right)& \leq \sphericalangle\big(P_y^x, P(y,r_{x,i})\big) + \sphericalangle\big(P(y,r_{x,i}),P(y,r_{\tilde x,k})\big) + 
   \sphericalangle\big(P(y,r_{\tilde x , k}),P_y^{\tilde x}\big) \\
   & \leq \varepsilon.
  \end{align*}
  The limit $\varepsilon \to 0$ implies
   \[ P_y^x = P_y^{\tilde x}.\]
  \item For $y,z \in \Sigma \cap B_{R_x}(x)$, $k\geq \tilde k_x$ and $r\leq r_{x,k}$ Lemma \ref{lem:reifenbergPy} leads to
  \begin{align*}
   \dist_{\HM}\Big( \Sigma \cap B_{r}(y),(y+P_y) \cap B_r(y)\Big) \leq r \tilde \delta_{x,k}\\
   \intertext{and}\\
   \dist_{\HM}\Big( \Sigma \cap B_{r}(z),(z+P_z) \cap B_r(z)\Big) \leq r \tilde \delta_{x,k}.\\
  \end{align*}
  If $\vert z-y \vert < \frac {r_{x,k}} 2$ and $\tilde \delta_{x,k}< \frac 1 {11}$, then
   \[ \frac {2}{1-2\tilde \delta_{x,k}} (\tilde \delta_{x,k} + 2\tilde \delta_{x,k}) < \frac {22}{ 3} \tilde \delta_{x,k} < \frac 1{\sqrt 2} \]
  and for $r_1:=r_2:=r_{x,k}$ and $\delta_1:=\delta_2:=\tilde \delta_{x,k}$ Lemma \ref{lem:reifenbergwinkel} yields
   \[ \sphericalangle(P_y,P_z) \leq \frac {22} 3 C_1(m) \tilde \delta_{x,k},\]
  which completes the proof.
 \end{enumerate}
\end{proof}

\vspace{3\baselineskip}
%\newpage
\begin{lem} \label{lem:Reifenbergaequ}
 For closed $\Sigma \subset \R^n$, the following statements are equivalent:
 \begin{enumerate}
  \item $\Sigma$ satisfies $(RPC)$
  \item $\Sigma$ is Reifenberg-flat with vanishing constant and, for all compact subsets $K \subset \Sigma$ and all $x \in K$ there exists a plane $L_x \in G(n,m)$ 
  such that
   \[ \sup_{x \in K} \sphericalangle \big( L(x,r),L_x \big)\xrightarrow[r \to 0]{}0, \]
  for all $L(x,r)\in G(n,m)$ with
   \[\sup_{x \in K} \frac 1 r \dist_{\HM}\Big(\Sigma \cap B_r(x),\big(x+ L(x,r)\big) \cap B_r(x)\Big) \xrightarrow[r \to 0]{} 0 \]
 \end{enumerate}
\end{lem}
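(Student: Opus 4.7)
The plan is to prove each implication separately, relying on the preparatory lemmata of this section.

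For the direction $(1) \Rightarrow (2)$, I would first upgrade the pointwise-ball Reifenberg statement of Lemma~\ref{lem:reifenbergPy} to a uniform statement on an arbitrary compact $K \subset \Sigma$. Cover $K$ by finitely many balls $B_{R_{x_j}/2}(x_j)$ with $x_j \in K$; any $y \in K$ then lies in some $B_{R_{x_j}}(x_j)$, so Lemma~\ref{lem:reifenbergPy} applies with that $x_j$. For $r$ below the smallest threshold scale $r_{x_j,\tilde k_{x_j}}$, the definition $\tilde\delta(r) := \max_j \tilde\delta_{x_j,r}$ yields $\theta_K(r) \leq \tilde\delta(r)$ with $\tilde\delta(r) \to 0$ by Remark~\ref{rem:tildedelta}, establishing Reifenberg-flatness with vanishing constant. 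For the plane limits, I set $L_y := P_y$, which is well-defined independently of the covering centre by Lemma~\ref{lem:pyx=pyz}(1). Given any admissible family $L(y,r)$ whose uniform error $\eta(r)$ tends to zero, Lemma~\ref{lem:reifenbergwinkel} applied with $x_1=x_2=y$, $r_1=r_2=r$, $\delta_1=\eta(r)$, $\delta_2=\tilde\delta(r)$, $P_1=L(y,r)$, $P_2=P_y$ gives, once the smallness condition is met,
\[ \sphericalangle\!\bigl(L(y,r),P_y\bigr) \leq C_1 \cdot \tfrac{2}{1-2\eta(r)}\bigl(\eta(r)+2\tilde\delta(r)\bigr), \]
uniformly in $y \in K$, with right-hand side tending to zero.

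For the direction $(2) \Rightarrow (1)$, I would simply read off the ingredients required by $(RPC)$ at each $x \in \Sigma$. Fix any $R_x > 0$; then $K_x := \bar B_{R_x}(x) \cap \Sigma$ is compact since $\Sigma$ is closed. Set $r_{x,i} := 2^{-i} R_x$ and $C_x := 2$, and for every $y \in K_x$ and $r > 0$ pick a plane $L(y,r) \in G(n,m)$ whose approximation error is within factor two of the infimum defining $\theta_\Sigma(y,r)$. Put $P(y,r_{x,i}) := L(y,r_{x,i})$ and $\delta_{x,i} := \max\{2\theta_{K_x}(r_{x,i}),\,2^{-i}\}$; this lies in $(0,1]$ after discarding finitely many initial indices and tends to zero by Reifenberg-flatness with vanishing constant of $\Sigma$. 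The family $L(y,r)$ satisfies the uniform approximation hypothesis of~(2) on $K_x$, so~(2) supplies planes $L_y$ with $\sup_{y \in K_x}\sphericalangle(L(y,r),L_y) \to 0$. Setting $P_y := L_y$ and $\varepsilon_{x,i} := \max\{\sup_{y \in K_x}\sphericalangle(P(y,r_{x,i}),L_y),\,2^{-i}\}$ then secures the second $(RPC)$-estimate.

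The main obstacle I anticipate is the coordination of local data in the first direction: the radii $R_{x_j}$, threshold indices $\tilde k_{x_j}$, sequences $r_{x_j,\cdot}$, and error functions $\tilde\delta_{x_j,\cdot}$ all depend on the covering centre, so distilling a single $\tilde\delta(r)$ that controls $\theta_K(r)$ on the whole of $K$ requires both the halving of the covering radii (so that each $y\in K$ lies well inside some $B_{R_{x_j}}(x_j)$) and a restriction of $r$ to a common scale. Once this bookkeeping is complete, Lemma~\ref{lem:reifenbergwinkel} handles the uniform plane convergence essentially in one line, and the reverse implication is a matter of naming the right sequences.
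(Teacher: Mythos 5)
Your proposal is correct and follows essentially the same strategy as the paper's own proof: a finite cover of $K$ by the $(RPC)$ balls together with Lemma~\ref{lem:reifenbergPy} and Remark~\ref{rem:tildedelta} gives uniform Reifenberg-flatness, Lemma~\ref{lem:reifenbergwinkel} applied with $x_1=x_2$ and $r_1=r_2$ gives the uniform plane convergence to $P_y$, and the reverse implication is obtained by extracting (near-)minimizing planes at dyadic scales and reading off the sequences $\delta_{x,i}$, $\varepsilon_{x,i}$. The only cosmetic differences are the half-radius covering, the use of near-minimizers rather than the exact minimizers the paper takes via compactness of $G(n,m)$, and the explicit truncation with $2^{-i}$ to force the sequences into $(0,1]$ — none of which changes the substance.
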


Note that the existence of planes $L(x,r)$, which approximate $\Sigma$ with respect to the Reifenberg-flatness such that their distances to $\Sigma$ converges uniformly to zero is 
already guaranteed by the Reifenberg-flatness with vanishing constant. Only the existence of a limit-plane is an additional condition to the Reifenberg-flatness in \ref{lem:Reifenbergaequ} (2). 
Obviously, $L_x$ and $P_x$ will coincide.

\begin{proof}
 $"(1) \Rightarrow (2)":$ For fixed $x \in \Sigma$ using Lemma \ref{lem:reifenbergPy} yields for $k \geq \tilde k_x$
  \[ \sup_{y \in \Sigma \cap B_{R_x}(x)} \frac 1 r \dist_{\HM} \Big(\Sigma \cap B_r(y),(y+ P_y) \cap B_r(y) \Big) \leq \tilde \delta_{x,k} \Foa r \leq r_{x,k}.\]
 For a compact set $K \subset \Sigma$ we have
  \[ K \subset \bigcup_{x \in K} B_{R_x}(x) \]
  and the compactness provides $x_1,\dots ,x_N \in K$ with
  \[ K \subset \bigcup \limits_{i=1}^N B_{R_{x_i}}(x_i). \]
 Let $\tilde k \in \N$ be defined by $\tilde k := \max\{\tilde k_{x_1},\dots , \tilde k_{x_N}\}$. 
 For given $\delta>0$ and $i \in \{1,\dots,N\}$ the convergence of $\tilde \delta_{x_i,k}$ to zero guarantees that there is a $j(x_i,\delta)\geq \tilde k$ such that
 $\tilde \delta_{x_i,j(x_i,\delta)} \leq \delta$. This implies
  \[ \sup_{y \in \Sigma \cap B_{R_{x_i}}(x_i)} \frac 1 r \dist_{\HM} \Big(\Sigma \cap B_r(y),(y+ P_y) \cap B_r(y) \Big) \leq \tilde \delta_{x_i,j(x,\delta)} \leq \delta \Foa r \leq r_{x_i,j(x_i,\delta)} .\]
 Now define $r_0=r_0(\delta):= \min\{r_{x_1,j(x_1,\delta)}, \dots ,r_{x_N,j(x_N,\delta)} \}$. Then we get
 \begin{align*}
  &\phantom{\leq}\sup_{y \in K} \frac 1 r \dist_{\HM} \Big(\Sigma \cap B_r(y),(y+ P_y) \cap B_r(y) \Big) \\
  &\leq \max_{i=1,\dots,N} \sup_{y \in \Sigma \cap B_{R_{x_i}}(x)} \frac 1 r \dist_{\HM} \Big(\Sigma \cap B_r(y), (y+P_y) \cap B_r(y) \Big)\\
  &\leq \delta \Foa r \leq r_0.
 \end{align*}
 This holds true for every arbitrary $\delta>0$ implying that $\Sigma$ is a Reifenberg-flat 
 set with vanishing constant and fixed approximating plane.\\
 Now let $x\in K$ and $L(x,r) \in G(n,m)$ be a plane, depending on $x$ and $r$, such that
  \[\frac 1 r \dist_{\HM}\Big(\Sigma \cap B_r(x),\big(x+L(x,r)\big)\cap B_r(x)\Big)=:\delta(x,r) \xrightarrow[r\to0]{}0 .\]
 We have to show that $L(x,r)$ converges to a limit plane $L_x \in G(n,m)$ and in fact we will show $L_x =P_x$.\\
 For $x_1=x_2=x$, $r_1=r_2=r$, $P_1=L(x,r)$, $P_2=P_y$, $\delta_1=\delta(x,r)$ and $\delta_2=\tilde \delta_{x,k(r)}$, where $k(r)$ is defined such that $r_{x,k(r)+1}<r \leq r_{x,k(r)}$, 
 we have $\delta_1,\delta_2< \frac 1 2$ for $r$ small enough, as well as
  \[ \frac 2 {1-2\delta(x,r)} \left(\delta(x,r) + 2 \tilde \delta_{x,k(r)}\right) < \frac 1 {\sqrt 2},\] 
 Lemma \ref{lem:reifenbergwinkel} leads to
 \begin{align*}
  \lim_{r \to 0}\sphericalangle\big(L(x,r),P_y\big) & \leq \lim_{r \to 0}C_1(m) \frac 2 {1-2\tilde \delta_{x,k(r)}} \left( \delta(r) + 2 \tilde \delta_{x,k(r)}\right) = 0.
 \end{align*}
 $"(2) \Rightarrow (1)":$ For $x \in \Sigma$ define $R_x:=1$, $C_x>1$ arbitrary and a sequence $r_{x,i} \subset (0,1]$ with 
 $r_{x,i+1} \leq r_{x,i} \leq C_x r_{x,i+1}$ and $r_{x,i} \xrightarrow[i \to \infty]{}0$.\\
 The compactness of $(G(n,m),\sphericalangle(\cdot,\cdot))$ implies that for $y \in \Sigma \cap B_{R_x}(x)$ there exists a minimizer of
  \[ L \mapsto \frac 1 {r_{x,k}} \dist_{\HM}\Big( \Sigma \cap B_{r_{x,k}}(y), (y+L) \cap B_{r_{x,k}}(y) \Big). \]
 Let $P(y,r_{x,k})$ denote this minimizer. Define
  \[ \delta_{x,k} := \sup_{y \in \Sigma \cap \overline{B_{R_x}(x)}} \frac 1 {r_{x,k}} \dist_{\HM}\Big( \Sigma \cap B_{r_{x,k}}(y), \big(y+ P(y,r_{x,k})\big) \cap B_{r_{x,k}}(y)  \Big) .\] 
 The Reifenberg-flatness with vanishing constant guarantees $\delta_{x,k} \xrightarrow[k \to \infty]{} 0$.
 Finally, themade assumptions imply that for all $y \in \Sigma \cap B_{R_x}(x)$ there exists a $P_y:=L_y \in G(n,m)$ with
  \[ \sup_{y \in \Sigma \cap B_{R_x}(x)} \sphericalangle\big(P(y,r_{x,k}),P_y\big) =: \varepsilon_{x,k} \xrightarrow[k \to \infty]{}0. \]
\end{proof}

$\Sigma$ being a $C^1$-submanifold, is equivalent to $\Sigma$ locally being a graph of a $C^1$-function. Therefore it is a necessary condition, that for each $x \in \Sigma$ 
there exists a plane $P \in G(n,m)$ such that the orthogonal projection $\pi_{x+P\mid \Sigma}$ is locally bijective onto an open subset of $x +P$. Both, the injectivity and surjectivity 
will be results of the Reifenberg-flatness of $\Sigma$. $(RPC)$ guarantees for $\Sigma$ to be Reifenberg-flat with vanishing constant, which allows us to use Lemma \ref{lem:projectionsurj}, stated for 
codimension 1 in \cite{david-kenig-toro_2001} and ensuring the surjectivity. Although the main argument of \cite{david-kenig-toro_2001} does not depend on the dimension, we will present the proof of Lemma \ref{lem:projectionsurj} and \ref{lem:reifenbergfunction}, which is also part of \cite{david-kenig-toro_2001}, in appendix \ref{sec:reifenbergproof} to make sure, 
that this result still holds for higher codimension.\\
Lemma \ref{lem:reifenbergfunction} yields a parameterization for Reifenberg-flat sets, which is often used to achieve more results for Reifenberg-flat sets. Here we will need this parameterization only to prove Lemma \ref{lem:projectionsurj}.

\begin{lem} \label{lem:reifenbergfunction}
 There exists a $\delta_0>0$ such that for every closed, $m$-dimensional $\delta$-Reifenberg-flat set $\Sigma \subset \R^n$ with $\delta\leq \delta_0$ and $x \in \Sigma$ there 
 is a $R_0=R_0(x,\delta, \Sigma)>0$ such that for all $L \in G(n,m)$ with
  \[ \dist_{\HM}\Big( \Sigma \cap B_r(x), (x+L) \cap B_r(x) \Big) \leq r\delta \Fo r \leq R_0 \]
 exists a continuous function 
  \[ \tau \colon (x+L) \cap \overline{B_{\frac {15}{16}r}(x)} \to \Sigma \cap \overline{B_r(x)} \]
 with
  \[ \vert \tau(y)-y \vert \leq Cr\delta \leq \frac 5 {144} r \Foa y \in (x+L) \cap \overline{B_r(x)}. \]
\end{lem}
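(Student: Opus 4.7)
The plan is to construct $\tau(y)$ by an iterative refinement procedure on geometrically shrinking scales, following the scheme that goes back to Reifenberg's original argument and was adapted in \cite{david-kenig-toro_2001}.

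First I would fix $y \in (x+L) \cap \overline{B_{\frac{15}{16}r}(x)}$, set up dyadic scales $r_k := 2^{-k} r$, and pick a starting point $y_0 \in \Sigma \cap \overline{B_r(x)}$ with $|y_0 - y| \leq r\delta$, whose existence is guaranteed by the assumed Hausdorff bound at $x$. Inductively, having obtained $y_k \in \Sigma$ together with a summable estimate for $|y_k - y|$, I would use $\delta$-Reifenberg-flatness at $y_k$ on scale $r_k$ to select an $m$-plane $L_k$ with
\[ \dist_{\HM}\bigl(\Sigma \cap B_{r_k}(y_k),\, (y_k + L_k) \cap B_{r_k}(y_k)\bigr) \leq r_k \delta, \]
orthogonally project $y$ onto the affine plane $y_k + L_k$ to get an auxiliary point $y'_k$, and invoke the Hausdorff bound once more to find $y_{k+1} \in \Sigma$ with $|y_{k+1} - y'_k| \leq r_k \delta$. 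A triangle inequality combined with Lemma \ref{lem:reifenbergwinkel}, which controls how much $L_k$ tilts away from $L_{k-1}$ on adjacent scales, should yield $|y_{k+1} - y_k| \leq C \delta\, r_k$ with a constant $C$ independent of $k$.

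Since $\sum_k r_k < \infty$, the sequence $(y_k)$ is Cauchy and converges to some $y_\infty \in \Sigma$ (closed), and I set $\tau(y) := y_\infty$. Telescoping gives $|\tau(y) - y| \leq C r \delta$, which is bounded above by $\tfrac{5}{144}r$ once $\delta_0$ is chosen small enough, and then $\tau(y) \in \overline{B_r(x)}$ follows from $\tfrac{15}{16} + C\delta \leq 1$. For continuity, I would run the construction in parallel from two starting points $y, \tilde y$; at each scale the projection onto $y_k + L_k$ (respectively $\tilde y_k + \tilde L_k$) is $1$-Lipschitz, while both the discrepancy between the two chosen planes and the error of replacing projections by actual points of $\Sigma$ are controlled by $\delta\, r_k$. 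Careful bookkeeping then shows that $|\tau(y) - \tau(\tilde y)|$ is dominated by $|y - \tilde y|$ up to a tail that vanishes as the iteration progresses, giving continuity of $\tau$.

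The main obstacle I expect is calibrating constants so that every step (a) remains inside the balls on which Reifenberg-flatness is actually available, forcing $\delta_0$ small and $R_0$ compatible with the geometric decay of scales, and (b) produces the final bound $Cr\delta \leq \tfrac{5}{144}r$ with a genuinely $k$-independent $C$. A secondary subtlety is the non-canonical choice of $L_k$ and $y_{k+1}$: this has to be dealt with either by a fixed selection depending continuously (or at least measurably) on the base point, or by verifying that every admissible choice lies within the error tolerance, so that $\tau$ is well-defined and continuous regardless of the selections made during the induction.
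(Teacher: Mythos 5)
The paper's proof follows the Reifenberg/David--Kenig--Toro smoothing scheme: at each scale $r_j = r/(12 \cdot 4^j)$ it covers $\Sigma \cap \overline{B_r(x)}$ by finitely many balls, picks a subordinate partition of unity $\{\varphi_z\}_{z \in Z_j}$, and defines a \emph{globally continuous} perturbation map
\[
\sigma_j(y) := y - \sum_{z \in Z_j} \varphi_z(y)\,\pi_{L(z,12r_j)}^\perp(y-z),
\]
then iterates $\tau_j := \sigma_j \circ \tau_{j-1}$ and shows $\tau_j$ converges uniformly. The continuity of the limit $\tau$ is automatic because each $\sigma_j$ is continuous by construction and the convergence is uniform. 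Your proposal replaces this by a pointwise iteration: fix $y$, choose $y_k \in \Sigma$, a plane $L_k$, and project. This is a genuinely different route, and it has two real gaps.

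First, the increments do not decay as claimed. You set $y_k' := \pi_{y_k + L_k}(y)$ and pick $y_{k+1} \in \Sigma$ with $|y_{k+1} - y_k'| \leq r_k\delta$. But then $|y_{k+1} - y_k| \leq r_k\delta + |y_k' - y_k| = r_k\delta + |\pi_{L_k}(y - y_k)|$, and the tangential part $|\pi_{L_k}(y - y_k)|$ is only controlled by $|y - y_k| \lesssim r\delta$, which is independent of $k$ --- it does \emph{not} scale like $r_k$. So the asserted bound $|y_{k+1}-y_k| \leq C\delta r_k$ does not follow, $\sum_k |y_{k+1}-y_k|$ is not obviously finite, and you also lose the hypothesis $|y_{k+1}-y_k| < r_k/2$ needed to apply Lemma~\ref{lem:reifenbergwinkel} at step $k+1$ once $r_k$ drops below $\sim r\delta$. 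A correct pointwise iteration exists, but it must track the tangential error $|\pi_{x+L}(y_k) - y|$ and choose the next point in the fibre direction rather than simply projecting the fixed $y$; the bookkeeping is not the straightforward triangle inequality you describe.

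Second, and more fundamentally, even if the pointwise construction were made to converge, it does not produce a \emph{continuous} $\tau$. At each step both $L_k$ and $y_{k+1}$ are chosen non-uniquely, and nearby starting points $y, \tilde y$ can be sent to wildly different admissible choices; the fact that every admissible choice lies within $O(\delta r_k)$ of every other gives you the $O(\delta r)$ bound on $|\tau(y)-y|$, but not continuity of $y \mapsto \tau(y)$. There is in general no continuous (or even measurable-and-stable) selection of minimizing planes for a Reifenberg-flat set. This is exactly the difficulty the partition-of-unity construction is designed to solve: the $\sigma_j$ average the perpendicular projections over overlapping charts, producing a map that is continuous by fiat, so that continuity of the limit reduces to uniform convergence. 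Your proposal flags this as a ``secondary subtlety,'' but it is in fact the central point of the lemma; without a device that enforces continuity of the selection, the argument does not close.
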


The constants $\delta_0$ and $R_0$ can be set as $\delta_0< (48(3C_1(m)+2))^{-1}$ and $R_0(x,\delta,\Sigma)>0$ small enough, such that
 \[ \frac 1 r \inf_{L\in G(n,m)} \dist_{\HM} \Big( \Sigma \cap B_r(y) , (y+L) \cap B_r(y)  \Big) \leq \delta \Foa y \in \Sigma \cap \overline{B_{R_0}(x)}. \]
Such an $R_0(x,\delta,\Sigma)$ exists, because of the Reifenberg-flatness.

\begin{lem} \label{lem:projectionsurj}
 For all closed, $\delta$-Reifenberg-flat sets $\Sigma \subset \R^n$ with $\delta \leq \delta_0$, 
 all $x \in \Sigma$ and $L \in G(n,m)$ with
  \[ \frac 1 r \dist_{\HM}\Big( \Sigma \cap B_r(x), (x+L) \cap B_r(x) \Big) \leq \delta \Fo r\leq R_0, \]
 we get 
  \[ (x+L) \cap B_{\frac r 4}(x) \subset \pi_{x+L} \left( \Sigma \cap B_{\frac r 2}(x) \right), \]
 where $\delta_0$ and $R_0$ are as stated in Lemma \ref{lem:reifenbergfunction}.
\end{lem}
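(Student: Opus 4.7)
The plan is to apply Brouwer's fixed point theorem inside the affine $m$-plane $x+L$, using the continuous section $\tau$ furnished by Lemma \ref{lem:reifenbergfunction}.

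Fix $p \in (x+L) \cap B_{r/4}(x)$. Since $\tau \colon (x+L) \cap \overline{B_{15r/16}(x)} \to \Sigma \cap \overline{B_r(x)}$ is continuous and $\overline{B_{r/2}(x)} \subset \overline{B_{15r/16}(x)}$, I define
\[ G \colon (x+L)\cap \overline{B_{r/2}(x)} \to x+L, \qquad G(y):= y - \pi_{x+L}\big(\tau(y)\big) + p. \]
Note that $y$, $p$ and $\pi_{x+L}(\tau(y))$ all lie in the affine plane $x+L$, so $G$ does take values in $x+L$ and is continuous. A fixed point $y_0$ of $G$ yields $p = \pi_{x+L}(\tau(y_0))$, and then $z:=\tau(y_0) \in \Sigma$ is the preimage we are looking for.

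Next I would verify that $G$ maps the closed ball $(x+L)\cap \overline{B_{r/2}(x)}$ into itself, so that Brouwer's theorem applies (this ball is topologically an $m$-dimensional closed disk). Using that $y \in x+L$, we have $y - \pi_{x+L}(\tau(y)) = \pi_{x+L}(y-\tau(y))$, and by Lemma \ref{lem:reifenbergfunction} this quantity has norm at most $|y-\tau(y)| \leq \frac{5}{144}r$. Combined with $|p-x| < r/4$, this gives
\[ |G(y)-x| \leq \frac{5}{144}r + \frac{r}{4} = \frac{41}{144}r < \frac{r}{2}, \]
so indeed $G$ maps the closed ball into itself (in fact into a strictly smaller ball). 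Brouwer then produces $y_0 \in (x+L)\cap \overline{B_{r/2}(x)}$ with $G(y_0)=y_0$, i.e. $\pi_{x+L}(\tau(y_0))=p$.

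It remains to check that $z=\tau(y_0)$ actually lies in $B_{r/2}(x)$, and not merely in $\overline{B_r(x)}$. Since $\pi_{x+L}(z)=p$, the vector $z-p$ is perpendicular to $L$, while $p-y_0 \in L$, so by Pythagoras $|z-p|^2 = |z-y_0|^2 - |p-y_0|^2 \leq |z-y_0|^2 \leq (\tfrac{5}{144}r)^2$. Hence
\[ |z-x| \leq |z-p| + |p-x| \leq \tfrac{5}{144}r + \tfrac{r}{4} = \tfrac{41}{144}r < \tfrac{r}{2}, \]
giving $z \in \Sigma \cap B_{r/2}(x)$ with $\pi_{x+L}(z)=p$ as required. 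The only substantive point is choosing the right self-map of a disk so that Brouwer applies; once $\tau$ is available from the previous lemma, the argument is essentially a careful tracking of the $\frac{5}{144}r$ estimate through the triangle inequalities.
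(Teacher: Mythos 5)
Your proof is correct, and it takes a genuinely different route from the paper's. The paper argues by contradiction using degree theory: assuming some $\xi \in (x+L)\cap B_{r/4}(x)$ is missed by $\pi_{x+L}\circ\tau$, it composes $\pi_{x+L}\circ\tau$ with a radial retraction onto the sphere $\partial B_{r/12}(\xi)$ in $x+L$, shows the resulting self-map of that sphere is homotopic to the identity (degree $1$), and then derives a contradiction because the map extends continuously over the ball (forcing degree $0$). You instead build, for each target $p\in(x+L)\cap B_{r/4}(x)$, the explicit self-map $G(y)=y-\pi_{x+L}(\tau(y))+p$ of the convex compact set $(x+L)\cap\overline{B_{r/2}(x)}$, verify it maps into the set via the $\frac{5}{144}r$ bound from Lemma \ref{lem:reifenbergfunction}, and invoke Brouwer's fixed point theorem directly. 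The Pythagoras step at the end, using that $\tau(y_0)-p \perp L$ while $p-y_0 \in L$, correctly places $\tau(y_0)$ inside $B_{r/2}(x)$, not merely $B_r(x)$. Both proofs are of the same topological depth (Brouwer and the degree computation are equivalent), but your version is more direct: it avoids the contradiction setup, the auxiliary radius $r/12$, the rescaling to the unit sphere, and the explicit homotopy, producing the preimage constructively from the fixed point.
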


We are now ready to prove Theorem \ref{thm:c1regular} in two steps. First we will see that if $\Sigma$ satisfies $(RPC)$, it is locally a graph of a $C^1$ function, i.e. it is an embedded $C^1$-submanifold. 
Finally we prove that every embedded $C^1$-submanifold satisfies the $(RPC)$ condition.

\begin{lem} \label{lem:RPCisC1}
 Assume $\Sigma \subset \R^n$ is closed and satisfies $(RPC)$ with dimension $m$, then for all $x \in \Sigma$ there exist a radius $r_x$ and a function 
 $u_x \in C^1(P_x,P_x^\perp)$ with
  \[ \left( \Sigma \cap B_{r_x}(x)\right) -x = \operatorname{graph}(u_x) \cap B_{r_x}(0), \]
 i.e. $\Sigma$ is an embedded, $m$-dimensional $C^1$-submanifold of $\R^n$.
\end{lem}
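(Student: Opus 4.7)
The plan is to exhibit $u_x$ by inverting the orthogonal projection $\pi_{x+P_x}$ on a sufficiently small neighborhood of $x$, to identify its candidate derivative at each point $y_0 \in \Sigma$ with the linear representation of $P_{y_0}$ as a graph over $P_x$, and to deduce $C^1$-regularity from the continuity of $y \mapsto P_y$ furnished by Lemma~\ref{lem:pyx=pyz}. By Lemma~\ref{lem:reifenbergPy} and Remark~\ref{rem:tildedelta}, $\Sigma$ is Reifenberg-flat with vanishing constant, and the fixed plane $P_y$ approximates $\Sigma$ at all scales $r \leq r_{x,k}$ with constant $\tilde \delta_{x,k}$, uniformly for $y \in \Sigma \cap B_{R_x}(x)$. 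First I fix $k$ large enough that $\tilde \delta_{x,k}$ is below every threshold required below (in particular $\tilde\delta_{x,k} \leq \delta_0$ from Lemma~\ref{lem:reifenbergfunction} and $C_2(m)\tilde\delta_{x,k} < 1/2$), and pick $r_x$ small relative to $r_{x,k}$.

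For bijectivity of $\pi_{x+P_x} \colon \Sigma \cap B_{r_x}(x) \to x + P_x$: surjectivity onto a slightly smaller ball follows from Lemma~\ref{lem:projectionsurj} applied with $L = P_x$. For injectivity, suppose $y_1 \neq y_2$ lie in $\Sigma \cap B_{r_x}(x)$ with $y_2 - y_1 \in P_x^\perp$. Decomposing $y_2 - y_1 = v + w$ with $v \in P_{y_1}$ and $w \in P_{y_1}^\perp$, Lemma~\ref{lem:abstandebene} gives $\vert w \vert \leq w_x(\vert y_2 - y_1 \vert)\vert y_2 - y_1 \vert$, while Lemma~\ref{lem:pyx=pyz} bounds $\sphericalangle(P_{y_1}, P_x) \leq C_2(m)\tilde\delta_{x,k}$. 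Since $v \in P_{y_1}$, we have $\vert \pi_{P_x}(v)\vert \geq (1 - C_2(m)\tilde\delta_{x,k})\vert v \vert$, while $\pi_{P_x}(v) = -\pi_{P_x}(w)$ because $v + w \in P_x^\perp$; combined with $\vert v\vert^2 + \vert w\vert^2 = \vert y_2 - y_1\vert^2$ this contradicts $y_1 \neq y_2$ once $r_x$ is small. I then set $u_x(p) := \pi_{P_x^\perp}(y(p) - x)$ on the open projected image, where $y(p)$ is the unique preimage; an analogous computation yields a Lipschitz estimate with small local constant, and a standard cutoff extends $u_x$ to all of $P_x$ without altering $\operatorname{graph}(u_x) \cap B_{r_x}(0)$.

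For differentiability at $p_0 := \pi_{P_x}(y_0 - x)$, Lemma~\ref{lem:projectioninv} yields that $\pi_{P_x \mid P_{y_0}} \colon P_{y_0} \to P_x$ is a linear isomorphism, so there is a well-defined linear map $A_{y_0} \colon P_x \to P_x^\perp$ whose graph is $P_{y_0}$, and by the angle estimate its operator norm is small. I would show $Du_x(p_0) = A_{y_0}$ as follows. For $y \in \Sigma \cap B_{r_x}(x)$ near $y_0$, writing $y - y_0 = v + w$ with $v \in P_{y_0}$ and $w \in P_{y_0}^\perp$, Lemma~\ref{lem:abstandebene} gives $\vert w \vert \leq w_x(\vert y - y_0 \vert)\vert y - y_0 \vert$, and a short computation expresses $u_x(p) - u_x(p_0) - A_{y_0}(p - p_0)$ as $\pi_{P_x^\perp}(w) - A_{y_0}(\pi_{P_x}(w))$, whose norm is at most a multiple of $\vert w \vert$ depending only on the operator norm of $A_{y_0}$. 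The same angle estimate gives $\vert p - p_0 \vert \geq \tfrac{1}{2}\vert y - y_0 \vert$ for $r_x$ small, so the quotient tends to $0$ as $y \to y_0$. Continuity of $p \mapsto Du_x(p)$ then follows from the chain $p \mapsto y(p) \mapsto P_{y(p)} \mapsto A_{y(p)}$, where the middle arrow carries the modulus provided by Lemma~\ref{lem:pyx=pyz} and the last one is continuous on the open subset $\{P \in G(n,m) : \sphericalangle(P, P_x) < 1\}$.

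The step I expect to be the main obstacle is the differentiability estimate. Lemma~\ref{lem:abstandebene} supplies only the absolute bound $\vert w \vert \leq w_x(\vert y - y_0 \vert)\vert y - y_0 \vert$, whereas one needs an $o(\vert p - p_0 \vert)$ statement. Converting between the two requires both the uniform comparability of $\vert p - p_0 \vert$ and $\vert y - y_0 \vert$, which itself is a consequence of the uniform angle smallness, and careful control of the operator norm of $A_{y_0}$; both estimates must hold uniformly in $y_0$ and rest on choosing $r_x$ so that the smallness hypotheses of Lemma~\ref{lem:pyx=pyz} persist throughout $\Sigma \cap B_{r_x}(x)$.
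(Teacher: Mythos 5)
Your proposal is correct and follows essentially the same route as the paper: use Lemma~\ref{lem:projectionsurj} for surjectivity, the angle bound from Lemma~\ref{lem:pyx=pyz} together with Lemma~\ref{lem:abstandebene} for injectivity and the Lipschitz estimate, identify the candidate derivative at $y_0$ with the limit plane $P_{y_0}$ (your map $A_{y_0}$ is the same as the paper's $Df(z) = (\pi_{P_x \mid P_{F(z)}})^{-1} - \mathrm{id}$, just phrased as a graph map), bound the error by $\dist(y, y_0 + P_{y_0})$ via Lemma~\ref{lem:abstandebene}, and deduce continuity of the derivative from the angle estimate. The reformulations (decomposing $y_2 - y_1$ in $P_{y_1}\oplus P_{y_1}^\perp$ for injectivity rather than bounding $\vert \pi_{P_x}^\perp(y-y')\vert$ directly, and isolating $w$ rather than the vector $e = F(z+h) - F(z) - L(h)$) are cosmetic.
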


Note that the radius $r_x$ can be given explicitly by $\frac 1 3 r_{x,k}$ for $k\in \N_{>1}$ such that $\tilde \delta_{x,k-1}< \min\{(48(3C_1(m)+2))^{-1},(6C_2(m)+2C_x)^{-1}\}$. 
Therefore, the radius for the neighbourhood, where $\Sigma$ can be represented as a $C^1$-graph 
depends only on the dimension of $\Sigma$ and the ratio of decay between the sequences $\delta_{x,i},\varepsilon_{x,i}$ and $r_{x,i}$.

\begin{proof}
 Let $x$ be fixed and $k \in \N$ be sufficiently large, such that 
  \[ \tilde \delta_{x,k-1} < \min \left\{\delta_0 ,(6C_2(m)+2C_x)^{-1}\right \}.\]
 Note that $\tilde \delta_{x,k-1}< \min\{\delta_0 ,(6C_2(m)+2C_x)^{-1}\}$ already implies $\delta_{x,i}\leq \tilde \delta_{x,k-1}< C_x^{-1}$ for all $i \geq k$, i.e. $k\geq \tilde k_x$. The $\delta_0$ stated in the remark after Lemma 
 \ref{lem:reifenbergfunction} already guarantees $\delta_0< \frac 1 {11}$. Moreover, we have for all $r \in (0,r_{x,k}]$
  \[ \frac 1 r \dist_{\HM}\Big( \Sigma \cap B_r(y), (y+P_y) \cap B_r(y) \Big) \leq \tilde \delta_{x,k-1} < \delta_0 \Foa y \in \Sigma \cap \overline{B_{r_{x,k}}} \subset \Sigma \cap B_{r_{x,k-1}}(x). \]
 This implies $r_{x,k} \leq R_0(x,\tilde \delta_{x,k-1}, \Sigma)$. Therefore we have
 \[k\geq \tilde k_x, \, \, \ r_{x,k} < R_0(x, \tilde \delta_{x,k-1},\Sigma) \AND \tilde \delta_{x,k-1} < \min \left\{\frac 1 {11},\delta_0 ,(6C_2(m)+2C_x)^{-1}\right \}.\]
 Lemma \ref{lem:projectionsurj} implies
  \[ (x+P_x) \cap B_{\frac r 2}(x) \subset \pi_{x+P_x}(\Sigma \cap B_r(x)) \Foa r \leq \frac  {r_{x,k}} 2.\]
 Because of $\tilde \delta_{x,k}< \frac 1{11} $, Lemma \ref{lem:pyx=pyz} yields for $r \leq \frac{r_{x,k}}2$
  \[ \sphericalangle(P_x,P_y) \leq C_2(m) \tilde \delta_{x,k} \Foa y \in B_r(x). \]
 For $y\neq y' \in \Sigma \cap B_r(x)$, there exist an $i \geq k$ with $r_{x,i+1}< \vert y'-y\vert \leq r_{x,i}$ and therefore $y' \in \Sigma \cap B_{r_{x,k}}(x) \cap B_{r_{x,i}}(y)$.
 This implies
 \begin{align*}
  \vert \pi_{P_x}^\perp (y-y')\vert &\leq \sphericalangle(P_x,P_y) \vert y-y'\vert + \vert \pi_{P_y}^\perp (y-y')\vert \\
  &\leq C_2(m) \tilde \delta_{x,k} \vert y- y' \vert + \tilde \delta_{x,i} r_{x,i}\\
  &\leq \left( C_2(m) \tilde \delta_{x,k}+ C_x \tilde \delta_{x,i} \right) \vert y-y'\vert  \\
  & < \frac 1 2\vert y-y' \vert.
 \end{align*}
 Here we have used $\tilde \delta_{x,i}\leq \tilde \delta_{x,k} < (6C_2(m)+2C_x)^{-1} \leq (2C_2(m)+2C_x)^{-1}$. Then for $\Sigma_1:=\Sigma \cap B_r(x) \cap \pi_{x+P_x}^{-1}(B_{\frac r 2}(x))$, the projection $\pi_{P_x  \mid \Sigma_1}$ is injenctive and 
  \[ \pi_{x+P_x \mid \Sigma_1} \colon \Sigma_1 \to (x+P_x) \cap B_{\frac r 2 }(x)  \]
 is bijective. We move $x$ to zero and let $\tilde \Sigma_1:= (\Sigma-x) \cap B_r(0) \cap \pi_{P_x \mid \Sigma-x}^{-1} (B_{\frac r 2 }(0))$, then the projection
  \[ \pi_{P_x \mid \tilde \Sigma_1} \colon \tilde \Sigma_1 \to P_x \cap B_{\frac r 2 }(0) \]
 is also a bijection and invertible. Especially, for all $y \in \Sigma_1$, there exists exactly one $z=z(y) \in P_x \cap B_{\frac r 2}(0)$ with
  \[ \pi_{P_x}(y-x)= z. \]
 Moreover, we have
 \begin{align*}
  y&= x + \pi_{P_x}(y-x)+ \pi_{P_x}^\perp (y-x) \\
  &= x + z + \pi_{P_x}^\perp (y-x).
 \end{align*}
 Defining 
  \[ f \colon P_x \cap B_{\frac r 2}(0) \to P_x^\perp;\ \ z \mapsto \pi_{P_x}^\perp \circ \left( \pi_{P_x \mid \tilde \Sigma_1} \right)^{-1}_{\mid P_x \cap B_{\frac r 2}(0)}(z), \]
 then we get
  \[ \pi_{P_x}^\perp(y-x) = f(z) \AND f(0)=0, \]
 because $z(x)=0$.\\
 For $z,z'\in P_x \cap B_{\frac r 2}(0)$ define
  \[ \left( \pi_{P_x \mid \tilde \Sigma_1} \right)^{-1}(z)=:y \AND \left( \pi_{P_x \mid \tilde \Sigma_1} \right)^{-1}(z')=:y'.  \]
 Now we have
 \begin{align*}
 \left \vert \left( \pi_{P_x \mid \tilde \Sigma_1} \right)^{-1}(z) - \left( \pi_{P_x \mid \tilde \Sigma_1} \right)^{-1}(z')\right \vert &= \vert y-y'\vert \\
  &\leq \vert \pi_{P_x}(y-y')\vert + \vert \pi_{P_x}^\perp(y-y')\vert \\
  &\leq \vert z-z'\vert + \frac 1 2 \vert y - y'\vert.
 \end{align*}
 This leads to 
  \[ \vert y - y' \vert \leq 2 \vert z- z'\vert, \]
 which implies the continuity of $(\pi_{P_x\mid \tilde \Sigma_1})^{-1}$ and therefore also of $f$.\\
 For $z \in P_x \cap B_{\frac r 2} (0)$ the definition of $f$ and Lemma \ref{lem:abstandebene} lead to
 \begin{align*}
  \vert f(z) \vert &= \vert \pi_{P_x}^\perp (y(z)-x) \vert \\
  &= \dist(y(z),x+ P_x)\\
  &\leq w_x( \vert y(z)-x \vert) \cdot \vert y(z)-x \vert,
 \end{align*}
 where $y(z)$ denotes the unique element of $\Sigma_1$ with $\pi_{P_x}(y(z)-x)=z$. We further get
 \begin{align*}
  \vert y(z)-x \vert &= \vert x + z +f(z) -x \vert \\
  &= \vert z +f(z) \vert \\
  &\leq \vert z \vert + \vert f(z) \vert\\
  &\leq \vert z \vert + w_x(\vert y(z)-x \vert) \cdot \vert y(z) -x \vert.
 \end{align*}
 Note that $w_x(\vert y(z)-x\vert ) \leq \tilde \delta_{x,k}< \frac 1 {11}$ and therefore
  \[ \vert y(z)-x \vert \leq \frac {11}{10} \vert z \vert. \]
 Finally, this leads to
  \[ \vert f(z) \vert \leq \frac {11}{10} w_x( \vert y(z) -x \vert) \cdot \vert z \vert = o (\vert z\vert), \]
 because $y(z) \xrightarrow[z \to 0]{}x$ and $w_x(r) \xrightarrow[r \to 0]{}0$.
 This yields the existence of $Df(0)$ and $Df(0)=0$.\\
 Let $z \in P_x \cap B_{\frac r 2}(0)$ and $F$ be defined as $F(z)=x+z+f(z)$, as well as
  \[ L:= \left( \pi_{P_x \mid P_{F(z)}} \right)^{-1} \colon P_x \to P_{F(z)}. \]
 Note that $F(z) \in B_r(x)$ and
  \[ \sphericalangle\left(P_x,P_{F(z)}\right)< C_2(m) \tilde \delta_{x,k} <\frac 1 6 <1, \]
 then %lemma 2.2 of \cite{kolasinski-strzelecki-vdm_2015} 
 Lemma \ref{lem:projectioninv} implies, that $L$ is well-defined. For $z,z+h \in P_x \cap B_{\frac r 2}(0)$, we get
  \[ F(z+h)-F(z) = L(h)+F(z+h)-F(z) -L(h). \]
 Using $e:=F(z+h)-F(z)-L(h)$ leads to
 \begin{align*}
  \pi_{P_x}(e)&= \pi_{P_x}(x+z+h+f(z+h)-x-z-f(z)-L(h))\\
  &=\pi_{P_x}(h+f(z+h)-f(z)-L(h))\\
  &=h-\pi_{P_x}(f(z+h))-\pi_{P_x}(f(z)) - \pi_{P_x}(L(h))\\
  &=h-h\\
  &=0,
 \end{align*}
 since $f(\cdot) \in P_x^\perp$ and $\pi_{P_x} \circ L = id_{P_x}$. This implies
 \begin{align*}
  \vert e \vert &= \vert \pi_{P_x}^\perp (e) \vert \\
  &\leq \sphericalangle\left(P_x,P_{F(z)}\right) \vert e \vert + \vert  \pi_{P_{F(z)}}^\perp (e)\vert\\
  &\leq C_2(m) \tilde \delta_{x,k} \vert e \vert + \vert  \pi_{P_{F(z)}}^\perp (e)\vert.
 \end{align*}
 Transforming this inequality and using $C_2(m) \tilde \delta_{x,k}<\frac 1 6$ yield
 \begin{align*}
  \vert e \vert &< \frac 6 5 \vert \pi_{P_{F(z)}}^\perp(e)\vert\\
  &= \frac 6 5 \vert \pi_{P_{F(z)}}^\perp (F(z+h)-F(z)-L(h))\vert \\
  &= \frac 6 5 \vert \pi_{P_{F(z)}}^\perp (F(z+h) -F(z) ) \vert \\
  &= \frac 6 5 \dist(F(z+h),F(z)+P_{F(z)}) \\
  &\leq \frac 6 5 w_x(\vert F(z+h)-F(z)\vert) \cdot \vert F(z+h)-F(z)\vert.
 \end{align*}
 For the last inequality we used Lemma \ref{lem:abstandebene} and the fact that $F(z),F(z+h) \in B_{r_{x,k}}(x)$, as well as $F(z+h) \in B_{r_{x,k}}(F(z))$ 
 for all $h \in P_x$ such that $z+h \in P_x \cap B_r(0)$.\\
 To estimate $\vert F(z+h)-F(z) \vert$ note
 \begin{align*}
  \vert L(h) -h \vert &= \vert\pi_{P_{F(z)}} (L(h)) - \pi_{P_x}(L(h))\vert \\
  &\leq \sphericalangle\left(P_{F(z)},P_x\right) \vert L(h)\vert\\
  &< \frac 1 6 \vert L(h) \vert.
 \end{align*}
 Therefore we get 
  \[ \frac 5 6 \vert L(h) \vert < \vert h \vert < \frac 7 6 \vert L(h) \vert. \]
 Using these estimates yields
 \begin{align*}
  \vert F(z+h)-F(z) \vert &= \vert L(h) + e \vert \\
  &\leq \vert L(h) \vert + \vert e \vert\\
  &\leq \frac 6 5 \vert h \vert + \frac 6 5 w_x(\vert F(z+h)-F(z)\vert) \cdot \vert F(z+h)-F(z)\vert.
 \end{align*}
 The fact that $F(z+h) \in B_{r_{x,k}}(F(z))$ for $z+h \in P_x \cap B_{\frac r 2}(0)$ leads to
  \[ w_x(\vert F(z+h)-F(z) \vert) \leq \tilde \delta_{x,k} < \frac 1 {11}. \]
 This implies
  \[  \vert F(z+h) -F(z) \vert < \frac {66}{49} \vert h \vert. \]
 Finally we get with the continuity of $F$
 \begin{align*}
  \vert F(z+h)-F(z) -L(h)\vert &=\vert e \vert \\
  &\leq \frac 6 5 w_x(\vert F(z+h)-F(z) \vert ) \cdot \vert F(z+h)-f(z) \vert\\
  & \leq  2 w_x(\vert F(z+h)-F(z) \vert ) \cdot \vert h \vert\\
  &= o(\vert h \vert).
 \end{align*}
 This is the differentiability of $F$ with $DF(z)=(\pi_{P_x \mid P_{F(z)}})^{-1}$ and, equivalent to this, the differentiability of $f$ with $Df(z)=DF(z)-id$.\\
 To see that $z \mapsto Df(z)$ is continuous, let $a \in P_x \cap \S^{m-1}$ and $w,z \in P_x \cap B_r(0)$, then
 \begin{align*}
  \vert (Df(z)-Df(w))a \vert &= \vert (DF(z)-DF(w))a \vert \\
  &= \vert \pi_{P_{F(z)}}(DF(z)a)- \pi_{P_{F(w)}}(DF(w)a) \vert\\
  &\leq \vert \pi_{P_{F(z)}}(DF(z)a) - \pi_{P_{F(w)}}(DF(z)a) \vert + \vert \pi_{P_{F(w)}}(DF(z)a-DF(w)a)\vert\\
  &\leq \sphericalangle\left(P_{F(z)},P_{F(w)}\right) \vert DF(z)a\vert + \vert \pi_{P_{F(w)}}(DF(z)a-DF(w)a)\vert.
 \end{align*}
 First we get
  \[ \sphericalangle\left(P_{F(z)},P_{F(w)}\right) \vert DF(z)a \vert \leq 2 C_2(m) \tilde \delta_{x,k} \vert Df(z)a+a \vert \]
 and since $Df(\cdot)a \in P_x^\perp$
 \begin{align*}
  \vert \pi_{P_{F(w)}}(DF(z)a-DF(w)a)\vert &=\vert \pi_{P_{F(w)}}(Df(z)a-Df(w)a)\vert \\
  &=\vert (\pi_{P_{F(w)}}-\pi_{P_x})(Df(z)a-Df(w)a)\vert \\
  &\leq  C_2(m) \tilde \delta_{x,k} \vert Df(z)a-Df(w)a\vert.
 \end{align*}
 In the case $w=0$ we get $Df(0)=0$ which leads to
 \begin{align*}
   \vert Df(z)a\vert &\leq 2 C_2(m) \tilde \delta_{x,k} \vert Df(z)a+a\vert +  C_2(m) \tilde \delta_{x,k} \vert Df(z)a \vert \\
 &\leq  3C_2(m) \tilde \delta_{x,k} \vert Df(z)a\vert + 2 C_2(m) \tilde \delta_{x,k}.
 \end{align*}
 Using $3 C_2(m) \tilde \delta_{x,k}<\frac 1 2$ yields
  \[ \vert Df(z)a\vert < 1 \AND \vert DF(z)a\vert < 2. \]
 Let $\varepsilon>0$ be arbitrary. There exists an $i \in \N$ such that $\tilde \delta_{x,i}<\frac 5 {12C_2(m)} \varepsilon$. Using the continuity of $F$ yields 
 the existence of an $r'>0$, such that for $w \in P_x \cap B_r(0)$ with $\vert z-w \vert<r'$, we get
  \[ \vert F(z)-F(w) \vert \leq \frac 1 2 r_{x,i} ,\Fo i \in \N_{\geq k}. \]
 This allows to improve the estimate of the angle, using Lemma \ref{lem:pyx=pyz} yields
  \[ \sphericalangle\left(P_{F(z)},P_{F(w)}\right) \leq C_2(m) \delta_{x,i} .\]
 Then the previous estimates imply
 \begin{align*}
  \vert Df(z)a -Df(w)a \vert &\leq  C_2(m) \tilde \delta_{x,i} \vert DF(z)a\vert + C_2(m) \tilde \delta_{x,k} \vert Df(z)a-Df(w)a\vert\\
  &<2 C_2(m) \tilde \delta_{x,i} + \frac 1 6 \vert Df(z)a-Df(w)a\vert.
 \end{align*}
 Finally this gives
 \[ \vert Df(z)a-Df(w)a\vert < \frac {12} 5 C_2(m) \tilde \delta_{x,i} <\varepsilon. \]
 Since we can choose $\varepsilon>0$ arbitrary, this is the continuity of $z \mapsto Df(z)$.\\
 To finish the proof let $\varphi \in C_0^\infty(P_x \cap B_{\frac r 2}(0)$ be a cut-off function with $0\leq \varphi \leq 1$ and $\varphi_{\mid P_x \cap B_{\frac r 3}(0)} \equiv 1$. 
 Define
  \[ \tilde f \colon P_x \to P_x^\perp: \ z \mapsto \begin{cases} 
                                                     \varphi(z) f(z) & \Fo z \in P_x \cap B_{\frac r 2}(0),\\
                                                     0 &\,\,\,\text{otherwise. }
                                                    \end{cases}
 \]
 Then for all $z \in P_x \cap B_{\frac r 3}$ we have $\tilde f(z)=f(z).$ Moreover, for $y \in \Sigma \cap B_{\frac r 3}(x)$ we have 
  \[ \vert \pi_{x+P_x}(y)-x \vert = \vert x + \pi_{P_x}(y-x)-x \vert < \frac r 3< \frac r 2,\]
 which implies 
 \begin{align*}
  \Sigma \cap B_{\frac r 3 }(x) &= x + \left( \operatorname{graph}(f) \cap B_{\frac r 3}(0) \right) \\
  &= x + \left( \operatorname{graph}(\tilde f) \cap B_{\frac r 3}(0) \right).
 \end{align*}

\end{proof}

To prove that every $C^1$-submanifold satisfies $(RPC)$ we will first state, that every graph of a function with bounded Lipschitz-constant can be locally approximated by planes, with respect to the Hausdorff-distance, i.e. it is Reifenberg-flat. 
The quality of this approximation is given by the Lipschitz-constant.

\begin{lem} \label{lem:graphreifenberg}
 Let $\Sigma \subset \R^n$. Assume for $x \in \Sigma$ exist a plane $P \in G(n,m)$, a radius $R>0$ and a function $u_x \colon P \to P^\perp$ with $u_x(0)=0$, $\operatorname{Lip}(u_{x\mid B_{R}(x)})\leq \alpha$, such that
  \[ \left(\Sigma \cap B_R(x)\right)-x = \operatorname{graph}(u_x) \cap B_R(x), \]
 then for all $y \in \Sigma \cap B_{\frac R 2 }(x)$ we have
  \[ \dist_{\HM}\Big( \Sigma \cap B_r(y), (y+P) \cap B_r(y) \Big) \leq r \alpha \Foa r \in \left(0, R/ 2\right]. \]
\end{lem}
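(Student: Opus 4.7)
The plan is to bound the two halves of the Hausdorff distance separately, using the graph representation $(\Sigma \cap B_R(x)) - x = \operatorname{graph}(u_x) \cap B_R(x)$ to reduce everything to the Lipschitz bound on $u_x$. Throughout I would fix $y \in \Sigma \cap B_{R/2}(x)$ and $r \in (0, R/2]$, and use the graph representation (applied to $y \in \Sigma \cap B_R(x)$) to write $y = x + q + u_x(q)$ with $q := \pi_P(y-x) \in P$.

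For the inclusion $\Sigma \cap B_r(y) \subset \{\text{points at distance} \leq r\alpha \text{ from } y+P\}$, pick $z \in \Sigma \cap B_r(y)$. The triangle inequality yields $|z - x| \leq |z-y| + |y-x| \leq r + R/2 \leq R$, so the graph representation gives $z = x + p + u_x(p)$ with $p := \pi_P(z-x)$. Since $\pi_P$ is $1$-Lipschitz and $u_x$ is $\alpha$-Lipschitz,
\[
 \dist(z, y+P) \;=\; |\pi_{P^\perp}(z-y)| \;=\; |u_x(p) - u_x(q)| \;\leq\; \alpha |p-q| \;\leq\; \alpha |z-y| \;\leq\; r\alpha.
\]

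For the reverse inclusion, pick $w \in (y+P) \cap B_r(y)$ and write $w = y + v$ with $v \in P$, $|v| \leq r$. The most natural candidate --- the graph point $x + (q+v) + u_x(q+v)$ above $q+v = \pi_P(w-x)$ --- has distance at most $\alpha|v|$ from $w$, but by the Pythagorean identity its distance from $y$ may reach $|v|\sqrt{1+\alpha^2}$, so it need not sit in $B_r(y)$. This is the only non-routine step. To fix it I would introduce a contraction parameter $\lambda \in (0,1]$ and set $z_\lambda := x + (q + \lambda v) + u_x(q+\lambda v)$. Using the orthogonal decomposition $P \oplus P^\perp$ and the Lipschitz bound on $u_x$,
\begin{align*}
 |z_\lambda - y|^2 &\leq \lambda^2(1+\alpha^2)\,|v|^2, \\
 |z_\lambda - w|^2 &\leq \bigl((1-\lambda)^2 + \lambda^2 \alpha^2\bigr)\,|v|^2.
\end{align*}

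Choosing $\lambda := (1+\alpha^2)^{-1}$ reduces the first bound to $|v|^2/(1+\alpha^2) \leq r^2$, which both ensures $z_\lambda \in B_r(y) \subset B_R(x)$ (so that $z_\lambda \in \Sigma$ via the graph representation, confirming membership in $\Sigma \cap B_r(y)$) and simplifies the second bound to $|z_\lambda - w| \leq \alpha|v|/\sqrt{1+\alpha^2} \leq r\alpha$. Combining the two directions yields the claimed estimate $\dist_{\HM}(\Sigma \cap B_r(y), (y+P) \cap B_r(y)) \leq r\alpha$. The remaining auxiliary checks --- that $q$ and $q + \lambda v$ lie in the region where the Lipschitz bound on $u_x$ is applicable (ensured by $|q| \leq |y-x| \leq R/2$ and $|\lambda v| \leq |v| \leq r \leq R/2$), and that all relevant points stay in $B_R(x)$ --- are immediate from the triangle inequality.
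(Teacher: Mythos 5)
Your proof is correct, and for the second half it takes a slightly different---and arguably cleaner---route than the paper's. The first inclusion (bounding $\dist(z, y+P)$ for $z \in \Sigma \cap B_r(y)$ by $\alpha \vert z-y \vert$ via the Lipschitz estimate) is essentially the paper's argument verbatim. For the reverse inclusion, the paper splits $(y+P) \cap B_r(y)$ into the inner ball of radius $r/\sqrt{1+\alpha^2}$ and the complementary annulus: over the inner ball it lifts a point straight up to the graph, which lands in $B_r(y)$ thanks to the Pythagorean bound $\sqrt{1+\alpha^2}\,\vert \tilde z - z(y) \vert < r$; over the annulus it first contracts radially onto the inner ball and then lifts, closing the argument with the elementary inequality $(1 - 1/\sqrt{1+\alpha^2})^2 + \alpha^2/(1+\alpha^2) \le \alpha^2$. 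Your device of contracting uniformly by $\lambda = (1+\alpha^2)^{-1}$ merges these two cases into a single computation, uses exactly the same Pythagorean bookkeeping, and in fact yields the marginally sharper bound $\alpha r/\sqrt{1+\alpha^2}$. Both proofs rest on the same substantive ingredients (graph representation, Lipschitz bound, orthogonality of the $P$ and $P^\perp$ components); the difference is only in how the surrogate base point on $P$ is chosen, and your choice removes the case split. One small remark: your first half controls $\dist(z, y+P)$ rather than $\dist\big(z, (y+P) \cap B_r(y)\big)$, but these coincide here since $\pi_{y+P}(z) \in B_r(y)$ whenever $z \in B_r(y)$, a point the paper makes explicit by exhibiting $y + \pi_P(\tilde y - y)$ as the comparison point.
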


\begin{proof}
 For all $y \in \Sigma \cap B_r(x)$ and $z(y)= \pi_P(y-x)$ we have
 \begin{align*}
   y&=x+ \pi_P(y-x) + \pi_P^\perp(y-x)\\
   &= x + z(y) + u_x\left(z(y)\right).
 \end{align*}
 Let $r \in (0, \frac R 2]$ be fixed. For $y \in \Sigma \cap B_{\frac R 2 }(x)$ and $\tilde y \in \Sigma \cap B_r(y)$ we get with $\pi_P(\tilde y -y ) +y \in (y+P) \cap B_r(y)$
 \begin{align*}
  \dist\Big(\tilde y , (y+P) \cap B_r(y) \Big) &\leq \left \vert \pi_P^\perp(\tilde y -y) \right \vert\\
  &= \left \vert \pi_P^\perp(\tilde y - x) - \pi_P^\perp(y-x)\right \vert\\
  &= \vert u_x(z(\tilde y)) - u_x(z(y)) \vert\\
  %&\leq \alpha \vert z(\tilde y) - z(y)\vert\\
  &\leq \alpha r.
 \end{align*}
 Note that 
 \begin{align*}
  y+P = x+z(y)+u_x(z(y)) + P =x+ u_x(z(y)) + P.
 \end{align*}
 Using $P \cap (B_r(y)-y) \subset P \cap B_R(0)$ we can write $\Sigma \cap B_r(y) = x+\operatorname{graph}(u_x) \cap B_r(y)$.
 For $x+ \tilde z + u_x(z(y)) \in (y+ P) \cap B_{\frac r {\sqrt {1+\alpha^2}}}(y)$, i.e. $\tilde z \in P \cap B_{\frac r {\sqrt {1+\alpha^2}}}(z(y))$ we have 
 \begin{align*}
  \vert x + \tilde z +u_x(\tilde z ) - y \vert &= \vert \tilde z +u_x(\tilde z) + z(y) + u_x(z(y)) \vert\\
  &= \sqrt{\vert \tilde z - z(y)\vert^2 + \vert u_x(\tilde z) - u_x(z(y)) \vert^2 }\\
  &\leq \sqrt{1+\alpha^2} \cdot \vert \tilde z -z(y) \vert\\
  &< r.
 \end{align*}
 This implies
 \begin{align*}
  \dist\Big(x+\tilde z +u_x\big(z(y)\big), \Sigma \cap B_r(y)\Big) &\leq \vert x + \tilde z +u_x(z(y)) - x -\tilde z - u_x(\tilde z)\vert\\
  &=\vert u_x(z(y))-u_x(\tilde z) \vert \\
  &\leq \frac {\alpha r}{\sqrt{1+\alpha^2}}.
 \end{align*}
 For $z' \in P \cap (B_r(z(y)) \setminus B_{\frac r {\sqrt {1+\alpha^2}}}(z(y))$ there exists a $\hat z \in P \cap B_{\frac r {\sqrt{1+\alpha^2}}}(z(y))$ with
  \[ \vert z'-\hat z \vert < \left( 1 - \frac 1 {\sqrt{1+\alpha^2}} \right)r. \]
 This leads to
  \[ \dist\Big(x+z'+u_x(z(y)), \Sigma \cap B_r(y)\Big)\leq \sqrt{\left(1-\frac 1{\sqrt{1+\alpha^2}}\right)^2 + \left( \frac \alpha {\sqrt{1+\alpha^2}}\right)^2} r \leq \alpha r. \]
 Finally this guarantees
  \[ \dist_{\HM}\Big( \Sigma \cap B_r(y) , (y+P) \cap B_r(y) \Big) \leq \alpha r. \]

\end{proof}

\begin{lem}
 An embedded $C^1$-submanifold $\Sigma$ of $ \R^n$ satisfiest $(RPC)$. Moreover, we get $P_x=T_x\Sigma$.
\end{lem}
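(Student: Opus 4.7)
My strategy is to use the local $C^1$-graph structure of $\Sigma$ over $T_x\Sigma$ to directly verify the three conditions of Definition \ref{def:RPC}, taking the approximating planes to be the tangent planes themselves. Fix $x \in \Sigma$ and choose $R > 0$ together with $u_x \in C^1(P_x, P_x^\perp)$, where $P_x := T_x\Sigma$, such that $u_x(0) = 0$, $Du_x(0) = 0$, and $(\Sigma - x) \cap B_R(0) = \operatorname{graph}(u_x) \cap B_R(0)$. By continuity of $Du_x$ I can shrink $R$ so that $\|Du_x(z)\| \leq 1/2$ on $P_x \cap \overline{B_R(0)}$. I then set $R_x := R/4$, $r_{x,i} := 2^{-i} R_x$, $C_x := 2$, $\varepsilon_{x,i} := 1/i$, and for every $y \in \Sigma \cap B_{R_x}(x)$ define $P_y := T_y\Sigma$ and $P(y, r_{x,i}) := P_y$ for all $i \in \N$. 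Under this choice, the angle condition $\sphericalangle(P(y, r_{x,i}), P_y) = 0 \leq \varepsilon_{x,i}$ holds trivially, and only the Hausdorff estimate requires real work.

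For the Hausdorff bound I would introduce the modulus of continuity
\begin{equation*}
\omega(\rho) := \sup\bigl\{\|Du_x(a) - Du_x(b)\| : a, b \in P_x \cap \overline{B_R(0)},\ |a - b| \leq \rho\bigr\},
\end{equation*}
which tends to $0$ as $\rho \to 0$ by compactness. For $y = x + z_0 + u_x(z_0) \in \Sigma \cap B_{R_x}(x)$ and any $y' = x + z + u_x(z) \in \Sigma$ near $y$, writing $v := z - z_0$ and $T(v) := v + Du_x(z_0)v \in T_y\Sigma$, I would use the decomposition
\begin{equation*}
y' - y = T(v) + \bigl[u_x(z_0 + v) - u_x(z_0) - Du_x(z_0)\,v\bigr]
\end{equation*}
together with the fundamental theorem of calculus to bound the bracketed remainder by $\omega(|v|)|v|$; since $|v| \leq |y' - y|$, this yields $\dist(y', y + P_y) \leq \omega(r) \cdot r$ for $y' \in \Sigma \cap B_r(y)$. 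The reverse Hausdorff inequality follows symmetrically: a point $y + T(v) \in (y + P_y) \cap B_r(y)$ satisfies $|v| \leq r$ (since $|T(v)| \geq |v|$) and is approximated by the $\Sigma$-point $y'' := x + (z_0 + v) + u_x(z_0 + v)$ with the same error $\omega(r) \cdot r$. The only annoyance is that $y''$ may fall slightly outside $B_r(y)$, but this is repaired by the standard shrinking argument on an inner ball of slightly smaller radius, exactly as in the last part of the proof of Lemma \ref{lem:reifenbergPy}. Setting $\delta_{x,i} := \max\{1/i,\ 2\omega(r_{x,i})\}$ then gives a sequence in $(0,1]$ with $\delta_{x,i} \to 0$ that verifies the Reifenberg bound, completing the check of $(RPC)$.

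The main obstacle would be the uniformity of the bound over $y \in \Sigma \cap B_{R_x}(x)$, but this dissolves once one notices that $\omega$ is intrinsic to the fixed function $u_x$ and that $y$ enters the estimate only through the base point $z_0 = \pi_{P_x}(y - x)$, while the quantities actually controlled by $\omega$ are the increments $|z - z_0| \leq r$. Hence $\omega$ serves as a uniform modulus for the whole family of local estimates, and the identification $P_x = T_x\Sigma$ is automatic from the construction.
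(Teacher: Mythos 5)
Your proof is correct in its essentials and takes a genuinely different, more direct route than the paper. The paper's argument proceeds by compactness and covering: for each scale index $i$ it covers $\Sigma \cap \overline{B_{R_x}(x)}$ by finitely many balls inside which $\Sigma$ is a Lipschitz graph with constant $\delta'_{x,i}$ over the center's tangent plane, applies Lemma \ref{lem:graphreifenberg} to obtain the Hausdorff bound with approximating plane $T_{y_j}\Sigma$ (the center of whichever covering ball contains $y$), compares $T_{y_j}\Sigma$ with $T_y\Sigma$ via Lemma \ref{lem:metricinequ}, and finally interpolates the recursively defined radii $r'_{x,i}$ to enforce the ratio condition $r_{x,i} \leq C_x r_{x,i+1}$. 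You avoid the covering step entirely: you fix a single graph parameterization $u_x$ over $P_x = T_x\Sigma$, take $P(y,r_{x,i}) = P_y = T_y\Sigma$ directly, and obtain a uniform Hausdorff bound of order $\omega(r)\,r$ from the first-order Taylor remainder of $u_x$, where $\omega$ is the modulus of continuity of $Du_x$; the dyadic choice $r_{x,i} = 2^{-i}R_x$ delivers the ratio condition for free with $C_x = 2$. Both routes are valid, but yours is shorter and bypasses Lemma \ref{lem:graphreifenberg} altogether. The only point worth tightening is your assertion that $\delta_{x,i} := \max\{1/i,\, 2\omega(r_{x,i})\}$ lies in $(0,1]$: with $\|Du_x\| \leq 1/2$ one only has $\omega \leq 1$, so $2\omega$ may reach $2$. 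Either shrink $R$ further so that, say, $\|Du_x\| \leq 1/4$, cap $\delta_{x,i}$ at $1$, or start the sequence from an index $i_0$ with $2\omega(r_{x,i_0}) \leq 1$; any of these trivially repairs the claim.
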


\begin{proof}
 For all $x \in \Sigma$ and $\alpha >0$ there is a radius $\tilde R_x(\alpha)>0$ such that $(\Sigma \cap B_{\tilde R_x(\alpha)}(x))-x$ is the graph of a $C^1$-function 
 $u_x \colon T_x\Sigma \to T_x\Sigma^\perp$ with $u_x(0)=0$ and $Du_x(0)=0$ as well as $\Vert Du_x \Vert_{C^0(B_{\tilde R_x(\alpha)}(0))}\leq \alpha$. Especially $\operatorname{Lip}(u_{x \mid B_{\tilde R_x(\alpha)}}) \leq \alpha.$\\ 
 Define $R_x:=r_{x,1}:=\frac 1 2 \tilde R_x(\alpha)$. For $y \in \Sigma \cap B_{R_x}(x)$ let the plane $P(y,r_{x,1})$ be defined by 
  \[ P(y,r_{x,1}):= T_x\Sigma. \]
 Lemma \ref{lem:graphreifenberg} implies for all $y \in \Sigma \cap B_{R_x}(x)$
  \[ \dist_{\HM}\Big( \Sigma \cap B_r(y), \big(y+ P(y,r_{x,1})\big) \cap B_r(y) \Big) \leq \alpha r \Foa r \leq r_{x,1}. \]
 Now define
  \[  \delta'_{x,i}:=  \frac {\delta_{x,1}}{2^{i-1}}:=\frac \alpha {2^{i-1}} . \]
 For all $i \in \N_{>0}$ we have
  \[ \Sigma \cap \overline{B_{R_x}(x)} \subset \bigcup_{y \in \Sigma \cap \overline{B_{R_x}(x)}} {B_{\frac {\tilde R_y( \delta'_{x,i})} 2}}(y). \]
 Then there exists an $N \in \N$ and $y_1,\dots,y_N \in \Sigma \cap \overline{B_{R_x}(x)}$ with
  \[ \Sigma \cap \overline{B_{R_x}(x)} \subset \bigcup_{j=1}^N B_{\frac {\tilde R_{y_j}(\delta'_{x,i})}2} (y_j). \]
 Define $ r'_{x,1}:=r_{x,1}$ and recursively
  \[  r'_{x,i} := \min \left \{ \min_{j \in \{1,\dots,N(i)\}} \left\{\frac {\tilde R_{y_j}( \delta'_{x,i})} 2 \right\}, \frac { r'_{x,i-1}}2 \right \}, \]
 as well as $P(y, r'_{x,i}):= T_{y_j}\Sigma$ for an arbitrary $j\in \{1,\dots,N(i)\}$ with $y \in B_{\frac {\tilde R_{y_j}( \delta'_{x,i})}2}(y_j)$.\\
 Using Lemma \ref{lem:graphreifenberg} for $R= \tilde R_{y_j}(\delta_{x,i}')$, we get for all $y \in B_{r_{x,i'}}(y_j)$
  \[ \dist_{\HM} \Big( \Sigma \cap B_r(y), \big(y+P(y, r'_{x,i})\big) \cap B_r(y)  \Big) \leq  \delta'_{x,i}r \Foa r \leq  r'_{x,i}. \]
 The $B_{\tilde R_{y_j}(\delta'_{x,i})}(y_j)$ cover $\Sigma \cap B_{R_x}(x)$ and therefore we have 
  \[ \dist_{\HM} \Big( \Sigma \cap B_r(y), \big(y+P(y, r'_{x,i})\big) \cap B_r(y)  \Big) \leq  \delta'_{x,i}r \Foa r \leq r'_{x,i} \AND y \in \Sigma \cap B_{R_x}(x). \]
 This holds for all $i \in \N$. Moreover, for all $\delta>0$ there exists an $i\in \N$ with $\delta'_{x,i}<\delta$, which implies that $\Sigma$ is Reifenberg-flat with 
 vanishing constant. Note that it is important, that the $r'_{x,i}$ are independent of $y \in \Sigma \cap B_{R_x}(x)$. \\
 It remains to show that we can define a sequence of radii $r_{x,i}$ which is controlled by a constant $C_x$, as well as the convergence of the planes $P(y,r_{x,i})$ to 
 $P_y=T_y\Sigma$.\\
 To see this, note that Lemma \ref{lem:metricinequ} implies
  \[ \sphericalangle\left(T_y\Sigma , P(y, r'_{x,i})\right) = \sphericalangle\left( T_y\Sigma , T_{y_j}\Sigma\right) \leq \delta'_{x,i} \Foa y \in \Sigma \cap B_{R_x}(x). \]
 This yields
  \[ \sup_{y \in B_{R_x}(x)} \sphericalangle\left(T_y \Sigma, P(y, r'_{x,i})\right) \leq  \delta'_{x,i} \xrightarrow [i \to \infty]{} 0. \]
 Now let $C_x>1$ be fixed. For all $i \in \N$, there exists an $l=l(i) \in \N_0$ with
  \[ C_x^l r'_{x,i+1} <  r'_{x,i} \leq C_x^{l+1} r'_{x,i+1}. \]
 If $r_{x,s}= r'_{x,i}$ and $\delta_{x,s}=\delta'_{x,i}$ 
 are defined, set recursively %$r_{x,j+1},\dots,r_{x,j+l(i)+1}$ as
 \begin{align*}
  r_{x,s+k}&:= \frac 1 {C_x^k} r_{x,s} \Fo k \in \{ 1,\dots, l(i)\}\\
  r_{x,s+l+1}&:=  r'_{x,i+1},
 \end{align*}
  \[ P(y,r_{x,s+k}):= P(y,r_{x,s}) = P(y, r'_{x,i}) \Fo k \in \{ 1,\dots, l(i)\} \]
 and 
 \begin{align*}
  \delta_{x,s+k}&:=\delta_{x,i} \Fo k \in \{ 1,\dots, l(i)\},\\
  \delta_{x,s+l(i)+1}&:=  \delta'_{x,i+1}.
 \end{align*}
 These definitions lead to
  %\[ r_{x,j+1} < r_{x,j} \leq C r_{x,j+1} \]
  \[ \sup_{y \in B_{R_x}(x)} \dist_{\HM} \Big(\Sigma \cap B_{r_{x,s}}(y), \big(y+P(y,r_{x,s})\big) \cap B_{r_{x,s}}(y) \Big) \leq \delta_{x,s} r_{x,s} \Foa s \in \N \]
 with $\lim_{s \to \infty} \delta_{x,s} =0$
 and 
  \[ \sup_{y \in B_{R_x}(x)} \sphericalangle\left(T_y\Sigma, P(y,r_{x,s})\right) \leq \varepsilon_{x,i}:= \delta_{x,s}. \]
 Moreover, if $s\in \N$ such that $r_{x,s}=r'_{x,i}$, then the definition of $r_{x,s}$ leads to
 \begin{align*}
  \frac {r_{x,s+k}}{r_{x,s+k+1}} &= C_x \Fo k \in \{0,\dots,\max\{0,l(i)-1\}\}\\
  \frac {r_{x,{j+l(i)}}}{r_{x,j+l(i)+1}}&= \frac {r'_{x,i} \cdot \frac 1 {C_x^{l(i)}}}{r'_{x,i+1}} \leq \frac {C_x^{l(i)+1}}{C_x^{l(i)}} = C_x.
 \end{align*}
 Finally these are all conditions required for $\Sigma$ to satisfy $(RPC)$.
\end{proof}
%%%%%%%%%%%%%%%%%%%%%%%%%%%%%%%%%%%%%%%%%%%%%%%%%%%%%%%%%%%%%%%%%%%%%%%%%%%%%%%%%%%%%%%%%%%%%%%%%%

\section{\bf{Proof of Theorem \ref{thm:intcondition}}}\label{sec:intcon_proof}

Unlikely Toro's condition in (2), the integral condition postulated in Theorem \ref{thm:intcondition} does not need a small bound but only to be finite. 
Note that the important part of this condition is the decay of $\theta_{B_{R_x}(x)}$ near zero, i.e. if for $x \in \Sigma$ there exists an $R_x > 0$ with
 \[\int \limits_0^{R_x} \frac {\theta_{ B_{R_x}(x)}(r)} r \ dr < \infty,\]
then for all $r,R$ with $0<r \leq R_x \leq R < \infty$ we get
\begin{align*}
 \int \limits_0^r \frac {\theta_{ B_{R_x}(x)}(r)} r \ dr &\leq \int \limits_0^{R} \frac {\theta_{ B_{R_x}(x)}(r)} r \ dr 
 \\&= \int \limits_0^{R_x} \frac {\theta_{ B_{R_x}(x)}(r)} r \ dr  + \int \limits_{R_x}^{R} \frac {\theta_{ B_{R_x}(x)}(r)} r \ dr \\
 & \leq \int \limits_0^{R_x} \frac {\theta_{ B_{R_x}(x)}(r)} r \ dr + \int \limits_{R_x}^{R} \frac {1} r \ dr\\
 &< \infty.
\end{align*}
On the other hand, we can not expect $R_x$ to contain any information about the size of the graph patches for $\Sigma$.\\
We will prove Theorem \ref{thm:intcondition} by showing that each $\Sigma$, which has an finite integral already satisfies $(RPC)$.

\begin{proof}[\textbf{Proof of Theorem \ref{thm:intcondition}}]  
 Let $C>1$ be arbitrary. For every $k \in \N$ there exist an $r_{x,k} \in (R_x/ C^{\frac {k+1 }2},R_x/ C^{\frac {k }2})$ with
  \[ \frac {\theta_{B_{R_x}(x)}(r_{x,k})} {r_{x,k}} \leq \int \limits_{R_x/ C^{\frac {k+1 }2}}^{R_x/ C^{\frac {k }2}} \frac {\theta_{B_{R_x}(x)}(r)} r \ dr \cdot \frac 1 {R_x \left(C^{-\frac {k }2} - C^{-\frac {k+1 }2} \right)}, \]
 otherwise we would get
 \begin{align*}
  \int \limits_{R_x/ C^{\frac {k+1 }2}}^{R_x/C^{\frac {k }2}} \frac {\theta_{B_{R_x}(x)}(r)} r \ dr 
  &>  \int \limits_{R_x/ C^{\frac {k+1 }2}}^{R_x/ C^{\frac {k }2}} \frac 1 {R_x \left(C^{-\frac {k }2} - 
  C^{-\frac {k +1}2} \right)} \int \limits_{R_x/ C^{\frac {k+1 }2}}^{R_x/ C^{\frac {k }2}} \frac {\theta_{B_{R_x}(x)}(r')} {r'} \ dr' \ dr \\
  &=\int \limits_{R_x/C^{\frac {k+1 }2}}^{R_x/ C^{\frac {k }2}} \frac {\theta_{B_{R_x}(x)}(r')} {r'} \ dr',
 \end{align*}
 which is a contradiction. Therefore, we have
  \[ r_{x,k+1}<r_{x,k} \leq C r_{x,k+1} \AND \lim_{k\to \infty} r_{x,k}=0. \]
 Moreover
 \begin{align*}
  \theta_{B_{R_x}(x)}(r_{x,k})&\leq  \frac {r_{x,k}} {R_x \left(C^{-\frac {k }2} - C^{-\frac {k +1}2} \right)} \cdot \int \limits_{R_x/ C^{\frac {k +1}2}}^{R_x/ C^{\frac {k }2}} \frac {\theta_{B_{R_x}(x)}(r)} r \ dr \\
  &\leq  \frac {R_xC^{-\frac {k }2}} {R_x C^{-\frac {k }2} \left(1 - C^{-\frac {1 }2} \right)} \cdot \int \limits_{R_x/ C^{\frac {k+1 }2}}^{R_x/ C^{\frac {k }2}} \frac {\theta_{B_{R_x}(x)}(r)} r \ dr\\
  &= \frac {C^{\frac 1 2}}{ C^{\frac 1 2} -1}  \cdot \int \limits_{R_x/ C^{\frac {k+1 }2}}^{R_x/ C^{\frac {k }2}} \frac {\theta_{B_{R_x}(x)}(r)} r \ dr.
 \end{align*}
 Therefore
 \begin{align*}
  \sum \limits_{k=0}^\infty \theta_{B_{R_x}(x)}(r_{x,k}) &\leq \frac {C^{\frac 1 2}}{ C^{\frac 1 2} -1}  \cdot \sum \limits_{k=0}^\infty \int \limits_{R_x/ C^{\frac {k+1 }2}}^{R_x/ C^{\frac {k }2}}  \frac {\theta_{B_{R_x}(x)}(r)} r \ dr\\
  &\leq \frac {C^{\frac 1 2}}{ C^{\frac 1 2} -1}  \int \limits_{0}^{R_x} \frac {\theta_{B_{R_x}(x)}(r)} r \ dr\\
  &<\infty.
 \end{align*}
 For $\delta_{x,k}:=\theta_{B_{R_x}(x)}(r_{x,k})$, this implies
  \[ \delta_{x,k} \xrightarrow[k\to \infty]{}0. \]
 Then we get for all sufficiently large $k \in \N$
  \[ \frac 2 {1-2\delta_{x,k+1}} (\delta_{x,k+1}+2C \delta_{x,k}) < \tilde C (\delta_{x,k+1}+2C \delta_{x,k}) < \frac 1 {\sqrt2}. \]
 Let $P(y,r_{x,k})$ denote a plane which approximates $\Sigma$ at $y \in \Sigma \cap B_{R_x}(x)$ and scale $r_{x,k}$, corresponding to $\delta_{x,k}$. Then Lemma \ref{lem:reifenbergwinkel} leads to
  \[ \sphericalangle\big(P(y,r_{x,k}),P(y,r_{x,k+1})\big) \leq \tilde C C_1(m) (\delta_{x,k+1} + 2 C \delta_{x,k}).\]
 For $i \in \N$ we get
 \begin{align*}
  \sphericalangle\big(P(y,r_{x,k}),P(y,r_{x,k+i})\big) &\leq \sum \limits_{l=0}^{i-1} \sphericalangle\big(P(y,r_{x,k+l}),P(y,r_{x,k+l+1})\big)\\
  &\leq \tilde C C_1(m) \sum \limits_{l=0}^{i-1} (\delta_{x,k+l+1}+2 C \delta_{x,k+l})\\
  &\xrightarrow[k \to \infty]{} 0,
 \end{align*}
 since $\sum \limits_{k=1}^\infty \delta_{x,k} < \infty.$
 This yields the existence of a plane $P_y \in G(n,m)$ such that 
  \[ \sphericalangle\big(P(y,r_{x,k}),P_y\big) \xrightarrow [k \to \infty]{}0. \]
 In particular, for all $\varepsilon >0$ there exist a $J_y \in \N$ such that
  \[ \sphericalangle\big(P(y,r_{x,k}),P_y\big) < \varepsilon \Foa k \geq J_y. \]
 For $i \in \N$ and $k >\max\{i,J_y\}$ we get
 \begin{align*}
  \sphericalangle\big(P(y,r_{x,i}),P_y\big) &\leq \sphericalangle\big(P(y,r_{x,i}),P(y,r_{x,k})\big) + \sphericalangle\big(P(y,r_{x,k}), P_y\big)\\
  &\leq \sum \limits_{l=0}^{k-i-1} \sphericalangle\big(P(y,r_{x,i+l}),P(y,r_{x,i+l+1})\big) + \varepsilon\\
  &\leq \sum \limits_{l=0}^{\infty} \sphericalangle\big(P(y,r_{x,i+l}),P(y,r_{x,i+l+1})\big) + \varepsilon.
 \end{align*}
 The limit $\varepsilon \to 0$ yields
 \begin{align*}
  \sphericalangle\big(P_y,P(y,r_{x,i})\big) &\leq \sum \limits_{l=0}^{\infty} \sphericalangle\big(P(y,r_{x,i+l}),P(y,r_{x,i+l+1})\big) \\
  &\leq \tilde C C(m) \sum_{l=i}^\infty (\delta_{x,l+1}+ 2 C \delta_{x,l}),
 \end{align*}
 if $i\geq N$ and $N \in \N$ such that
  \[ \frac 2 {1-2 \delta_{x,k+1}} (\delta_{x,k+1} + 2 C \delta_{x,k})<\tilde C(\delta_{x,k+1} + 2 C \delta_{x,k}) < \frac 1 {\sqrt 2} \Foa k \geq N.\]
 Then
  \[\varepsilon_{x,k}:= \begin{cases}
                         \tilde C C(m) \sum_{l=k}^\infty (\delta_{x,l+1}+ 2 C \delta_{x,l}) & \Fo k \geq N,\\
                         1 & \, \, \, \text{otherwise},
                        \end{cases}
  \]
 is independent of $y \in B_{R_x}(x)$ with
  \[ \sphericalangle\big(P_y,P(y,r_{x,k})\big)\leq \varepsilon_{x,k} \xrightarrow [k \to \infty]{} 0. \] 
 This is the condition of $(RPC)$ for $C=C_x$ and Lemma \ref{lem:RPCisC1} finishes the proof.
\end{proof}

\begin{rem} \label{rem:intcondition}
 An immediat result of the proof is that if there exist a constant $C>0$ and a monotonically decreasing sequence $(r_{x,k})_k \subset(0,R_x]$ with 
  \[ r_{x,k} \leq C r_{x,k+1} \AND \lim_{k\to \infty} r_{x,k}=0 \]
 such that
  \[ \sum \limits_{k=1}^\infty \theta_{B_{R_x}(x)}(r_{x,k})< \infty, \]
 then $\Sigma$ is an embedded, $m$-dimensional $C^1$-submanifold of $\R^n$. Moreover, the finiteness of the integral in Theorem \ref{thm:intcondition} implies this condition. 
\end{rem}
%%%%%%%%%%%%%%%%%%%%%%%%%%%%%%%%%%%%%%%%%%%%%%%%%%%%%%%%%%%%%%%%%%%%%%%%%%%%%%%%%%%%%%

\begin{appendix}
\section{\bf{A Reifenberg-flat set with vanishing constant without $C^1$-regularity}} \label{sec:example_notC1}

Let 
\begin{align*}
 u \colon \R \to \R &, \ u(z):= \sum_{k=1}^\infty \frac {\cos(2^kz)}{2^k\sqrt k}\\
 \intertext{and}
 U \colon \R \to \R^2 &, \ U(z):= \binom z {u(z)}.
\end{align*}
Then $\Sigma := \operatorname{graph}(u)=U(\R)$ is Reifenberg-flat with vanishing constant as stated in \cite{Toro_1997}.\\
Assume $\Sigma$ is a $C^1$-submanifold of $\R^2$. Then for all $x \in \Sigma$ and all $\alpha>0$ there exists a radius $r=r(x,\alpha)>0$ and a $C^1$-function $f_x \colon T_x\Sigma \to T_x\Sigma^\perp$ such that
 \[ \Sigma \cap B_r(x) = (x + \operatorname{graph}(f_x)) \cap B_r(x) \]
and 
 \[ \Vert f_x' \Vert_{C^0(T_x\Sigma \cap B_r(0),T_x\Sigma^\perp)} \leq \alpha. \]
Due to the symmetry of $u$, i.e. $u(z)=u(-z)$ for all $z \in \R$, we have for $x_0=U(0)$
 \[ T_{x_0}\Sigma \neq \{0\} \times \R. \]
This implies that there exists an $r'>0$ with
 \[ \left( \R \times \{0\} \right) \cap B_{r'}(0) \subset \pi_{\R \times \{0\}} \left( T_{x_0}\Sigma \cap B_r(0)\right). \]
Without loss of generality let $r'$ be small enough such that $U(z) \in B_r(x_0)$ for all $z \in B_{r'}(0)$.\\
The representation as a graph of $f_{x_0}$ yields the injectivity of 
 \[ g\colon \left(\R \times \{0\}\right) \cap \overline{B_{\frac {r'}2}(0)} \to \R \times \{0\}, \ t \mapsto \pi_{\R \times \{0\}} \left( \pi_{T_{x_0}\Sigma} \left(U(t)-U(0)\right) \right).\]
Together with the continuity of $g$ this implies that $g$ is monotonic. Then for $-\frac {r'}2 =t_0<t_1<\dots<t_k=\frac {r'}2$ and $t_i':= \pi_{T_{x_0}\Sigma}(U(t_i)-U(0))$ for $i=0,\dots,k$ we get either
\begin{align*}
 \pi_{\R \times \{0\}} \left( t_0'\right)< \pi_{\R \times \{0\}} \left(t_1'\right)< \dots <  \pi_{\R \times \{0\}} \left(t_k'\right),
\intertext{or}
  \pi_{\R \times \{0\}} \left(t_0'\right)> \pi_{\R \times \{0\}} \left(t_1'\right)> \dots >  \pi_{\R \times \{0\}} \left(t_k'\right).
\end{align*}
Therefore we have $\sum_{i=1}^k \vert t_i'-t_{i-1}'\vert = \vert t_k'-t_0'\vert$ and
\begin{align*}
 \sum_{i=1}^k \vert U(t_i)-U(t_{i-1})\vert & = \sum_{i=1}^k \left\vert \binom {t_i'}{f_{x_0}(t_i')} - \binom {t_{i-1}'}{f_{x_0}(t_{i-1}')}\right \vert\\
 &\leq \sum_{i=1}^k \sqrt{1+\alpha^2} \cdot \vert t_i'-t_{i-1}' \vert\\\
 &=\sqrt{1+\alpha^2}\cdot \left \vert \pi_{T_{x_0}\Sigma}\left(U\left(-\frac {r'}2\right)\right) -\pi_{T_{x_0}\Sigma}\left(U\left(\frac {r'}2\right)\right) \right \vert
\end{align*}
which is independent of the partition of the intervall $[-r'/2,r'/2]$. This implies $U \in BV([-r'/2,r'/2],\R^2)$ and $u\in BV([-r'/2,r'/2])$. Then $u$ has to be differentiable for almost all $z \in [-r'/2,r'/2]$ which is a contradiction to $u$ 
being not differentiable for all $z \in \R$.

\section{\bf{Counterexample for integral condition }} \label{sec:example_proof}

 The finiteness of the integral as well as of the sum in Theorem \ref{thm:intcondition} respectively remark \ref{rem:intcondition} imply that $\Sigma$ is a $C^1$-submanifold, but the following example will show, that these conditions are not equivalent. Moreover, one can ask 
 if $C^1$-submanifolds are characterized by
  \[ \int \limits_0^1 \frac {\theta_{B_{R_x}(x)}^\beta(r)}{r^\alpha}\ dr < \infty \Foa x \in \Sigma \]
 for any $\alpha, \beta >0$. Note that as in Theorem \ref{thm:intcondition} the upper bound of the integral can be replaced by any $R>0$ and the case $\alpha=\beta=1$ leads to the situation of Theorem \ref{thm:intcondition}.\\
 Using $\theta_{B_{R_x}}(r)\leq 1$ for all $x \in \Sigma$ and $r>0$ leads
  \[ \int \limits_0^1 \frac {\theta_{B_{R_x}(x)}^\beta(r)}{r^\alpha}\ dr \leq \int \limits_0^1 \frac {1}{r^\alpha}\ dr < \infty \Foa 0<\alpha <1, \]
 which does not depend on $\Sigma$. Therefore, if such a condition exists, $\alpha$ has to be greater or equal to one.\\
 Moreover, the finiteness of the integral with $\alpha>1$ and $\beta < 1$ implies the finiteness for $\alpha,\beta=1$. For $\alpha=1$ and fixed $\beta \geq 1$, the following example will provide a set $\Sigma \subset \R^2$, 
 which is a one-dimensional $C^1$-submanifold, but yields neither a finite integral nor a finite sum of its $\theta$-numbers. 
 
\begin{ex} \label{ex:intcondition}
 Let $\beta\geq1$ and
  \[ f_\beta \colon \left(-\frac 1 2,\frac 1 2\right) \to \R, \ y \mapsto \begin{cases} \bigg(- \frac 2 {\log(y^2)}\bigg)^{\frac 1 \beta}  & \Fo y \in \R \setminus \{0\},\\
                                           0 & \Fo y=0,
                                         \end{cases}
  \]
 and 
  \[g_\beta \colon \R \to \R, \ x \mapsto \begin{cases}
                                     \int \limits_{-\frac 1 2}^0 f_\beta(y) \ dy - \frac {x+\frac 1 2} {\log(2)^{\frac 1 \beta}} &\Fo y \in (-\infty, -\frac 1 2),\\
                                     \int \limits_x^0 f_\beta(y) \ dy &\Fo y \in [-\frac 1 2,0),\\
                                     \int \limits_0^x f_\beta(y) \ dy &\Fo y \in [0,\frac 1 2 ],\\
                                     \int \limits_0^{\frac 1 2} f\beta(y) \ dy + \frac {x- \frac 1 2} {\log(2)^{\frac 1 \beta}} &\Fo y \in (\frac 1 2 ,\infty).
                                    \end{cases}
\]
 Then $f_\beta$ is a continuous function and $g_\beta$ is $C^1$, but $g \not\in C^{1,\sigma}$ for every $\sigma>0$. The set $\Sigma := \operatorname{graph}(g_\beta)$ is a 
 $C^1$-submanifold of $\R^n$.\\
 For all $r \leq 2 e^{-1}<1$ we get
  \[ \left \vert \log\left( \frac {r^2} 4 \right) \right \vert \geq 2. \]
 Therefore,
  \[ \left \vert g_\beta\left(\frac r 2  \right)\right \vert = \int \limits_0^{\frac r 2} \left(\frac 2 {\vert \log (y^2) \vert }\right)^{\frac 1 \beta}\ dy \leq \frac r 2 \cdot \left(\frac 2 {\vert \log( \frac {r^2} 4) \vert}\right)^{\frac 1 \beta}  
  \leq \frac r 2\]
 and hence $\binom {\frac r 2 } {g_\beta( \frac r 2)} \in \Sigma \cap B_r(0)$ for all $r\leq 2e^{-1}$.
 Due to the symmetry of $g_\beta$, the planes, which realise $\theta(0,r)$ have to be equal to $T_0 \Sigma= \R \times \{0\}$. For all small $r$ we get
 \begin{align*}
  \theta(0,r) &\geq \frac {g_\beta(\frac r 2)} r \\
  &= \frac 1 r \int \limits_0^{\frac r 2} \left(- \frac 2 {\log (y^2)}\right)^{\frac 1 \beta} \ dy\\
  &\geq \frac 1 r \int \limits_{\frac r 4}^{\frac r 2} \left(- \frac 2 {\log (y^2)}\right)^{\frac 1 \beta} \ dy\\
  &\geq \frac 1 r \cdot \frac r 4 \cdot \left(- \frac 1 {\log ( \frac r 4)} \right)^{\frac 1 \beta}\\
  &= \frac 1 4 \cdot \left(- \frac 1 {\log ( \frac r 4)} \right)^{\frac 1 \beta}.
 \end{align*}
 For all $R>0$ and monotonically decreasing sequences $(r_i)_{i \in \N} \subset (0,\max\{R,2e^{-1}\}]$ and $C>1$ with
  \[ r_i \leq C r_{i+1} \Foa i \in \N \]
 and therefore
  \[ r_1 \leq C^{i-1}r_i, \]
 we get 
 \begin{align*}
  \theta_{B_R(0)}^\beta(r_i) &\geq \frac 1 {4^\beta} \cdot \frac {-1}{ \log (\frac {r_i} 4)}\\
  &\geq \frac 1 {4^\beta} \cdot \frac {-1}{ \log (\frac {r_1} {4C^{i-1}})} \\
  &= \frac 1 {4^\beta} \cdot \frac {-1}{ \log( \frac {r_1} 4) - \log (C^{i-1}))}. 
 \end{align*}
 Finally 
 \begin{align*}
  \sum \limits_{i=1}^\infty \theta_{B_R(0)}^\beta(r_i) 
  &\geq \frac 1 {4^\beta} \sum \limits_{i=1}^\infty \frac 1 {- \log(\frac {r_1}4) + \log (C^{i-1})} \\
  &\geq \frac 1 {4^\beta} \sum \limits_{i=1}^\infty \frac 1 {- \log(\frac {r_1}4) + (i-1)\log (C)} \\
  &= \infty.
 \end{align*}
 Using the same argument of remark \ref{rem:intcondition}, this implies that also
  \[ \int \limits_0^R \frac {\theta^\beta_{B_R(0)}(r)} r \ dr = \infty \Fo R>0. \]
\end{ex} 

\section{\bf{Proof of Lemma \ref{lem:reifenbergfunction} and Lemma \ref{lem:projectionsurj}} } \label{sec:reifenbergproof}

\begin{proof}[\textbf{Proof of Lemma \ref{lem:reifenbergfunction}}]
  \begin{enumerate}[wide, labelwidth=!, labelindent=0pt]
  \item Notation:\\
  Define 
  \begin{align*}
   S_0&:=(x+L)\cap \overline{B_r(x)},\\
   \Sigma_x&:= \Sigma \cap \overline{B_r(x)},\\
   \tau_0 &\colon S_0 \to S_0 ; \ \ z\mapsto z,\\
   \delta_0 &<\left( 48(3C_1(m)+ 2) \right) ^{-1}
  \end{align*}
 and $R_0>0$ small enough, that for all $r \in (0,R_0]$ we get
  \[ \frac 1 r \inf_{L\in G(n,m)} \dist_{\HM} \Big( \Sigma \cap B_r(y) , (y+L) \cap B_r(y)  \Big) \leq \delta \Foa y \in \Sigma \cap \overline{B_{R_0}(x)}. \]
 For $j\in \N_0$ let 
  \[ r_j:= \frac {r}{12 \cdot 4^j}. \]
 For all $j>0$ we get
  \[ \Sigma_x \subset \bigcup_{z \in \Sigma_x} B_{{r_j}}(z). \]
 The compactness of $\Sigma_x$ implies the existence of a $k_j \in \N$ and a set $ Z_j := \{z_{j,1}, \dots, z_{j,k_j} \}$ with
  \[ \Sigma_x \subset \bigcup_{z \in  Z_j}  B_{{r_j}}(z). \]
 Moreover, there exists a partition of unity $\{ \varphi_z \}_{z \in Z_j}$ with
  \begin{align*}
   0 \leq \varphi_z(y) &\leq 1 \Foa y \in \R^n \AND z \in Z_j,\\
   \varphi_z(y) &=0 \Foa y \in \R^n \AND z \in \Z_j \With \vert y-z \vert \geq 3r_j,\\
   \sum_{z \in Z_j} \varphi_z(y) &= 1 \Foa y \in V_j:=\{y \in \R^n \mid \ \dist(y,\Sigma_x)<r_j \}. 
  \end{align*}
 Note that $V_j \subset \bigcup_{z \in Z_j} B_{3r_j}(z)$. Then the existence of this partition is an immidiate result of e.g. \cite[p. 52]{guillemin-pollack_1974}.\\
 For $z \in Z_j$ let $L(z,12r_j) \in G(n,m)$ denote a plane with
  \[ \dist_{\HM}\Big( \Sigma \cap B_{12r_j}(z), \big(z+L(z,12r_j)\big) \cap B_{12r_j}(z) \Big) \leq 12 r_j \delta. \]
 The $\delta$-Reifenberg-flatness of $\Sigma$ and the fact that
  \[ 12r_j \leq r \leq R_0 \]
 guarantees the existence of $L(z,12r_j)$.\\
 Now define
 \begin{align*}
  \sigma_j(y) &:=y - \sum_{z \in Z_j} \varphi_z(y) \cdot \pi_{L(z,12r_j)}^\perp(y-z)\\
  \intertext{and}
  \tau_j(y) &:= (\sigma_j \circ t_{j-1})(y).
 \end{align*}
 \item For $y \in V_j \cap \overline{B_{r-2r_j(1+6\delta)}(x)}$ we get 
 \begin{align*}
  \dist\left(\sigma_j(y), \Sigma_x\right) &\leq (36C_1(m) + 24) r_j\delta\\
  \intertext{and}
  \vert \sigma_j(y)-y\vert &\leq \dist(y,\Sigma_x) + (36C_1(m) + 24)r_j \delta\\
   &\leq (1+ 36C_1(m)\delta + 24\delta)r_j.
 \end{align*}
 Note that 
  \[ r-2r_j(1+6\delta) \geq r- \frac 1 6 r \left(1+\frac 1 {16}\right) >0 \Foa j \in \N_0. \]
 Let $y \in V_j\cap \overline{B_{r-2r_j(1+6\delta)}(x)}$ and $Z_j(y):= \{ z \in Z_j \mid \ \vert z-y\vert <3r_j \}.$ Then we get
  \[ \sigma_j(y)= y- \sum_{z \in Z_j(y)} \varphi_z(y) \cdot \pi_{L(z,12r_j)}^\perp(y-z).\]
 For $z,z' \in Z_j(y)$, we have $\vert z - z'\vert <6r_j = \frac {12r_j}2$. The definition of $\delta_0$ further yields
  \[ \frac 6 {1-2\delta} \delta < 12\delta < \frac 1 {\sqrt 2}.\]
 Lemma \ref{lem:reifenbergwinkel} implies for $x_1=z,x_2=z'$, $\delta_1=\delta_2=\delta$, $r_1=r_2=12r_j$ and $P_1=L(z,12r_j),P_2=L(z',12r_j)$ that
  \[ \sphericalangle\big(L(z,12r_j),L(z',12r_j)\big) \leq 12 C_1(m) \delta. \]
 For fixed $z_0 \in Z_j(y)$ such that $\vert z_0 -y \vert <2 r_j$ define
  \[ \tilde y := y -\pi_{L(z_0,12r_j)}^\perp (y-z_0) \]
 and we get
 \begin{align*}
  \vert \sigma_j(y) - \tilde y\vert &=  \left\vert \sum_{z \in Z_j(y)} \left( \varphi_z(y) \cdot  \pi_{L(z,12r_j)}^\perp (y-z)\right) - \pi_{L(z_0,12r_j)}^\perp (y-z_0) \right \vert \\
  &= \left \vert  \sum_{z \in Z_j(y)}  \varphi_z(y) \cdot \left( \pi_{L(z,12r_j)}^\perp (y-z) - \pi_{L(z_0,12r_j)}^\perp (y-z_0) \right) \right \vert \\
  &= \left \vert  \sum_{z \in Z_j(y)}  \varphi_z(y) \cdot \left( \pi_{L(z,12r_j)}^\perp (y-z) - \pi_{L(z_0,12r_j)}^\perp (y-z) - \pi_{L(z_0,12r_j)}^\perp (z-z_0)\right) \right \vert\\
  &\leq \sum_{z \in Z_j(y)}  \varphi_z(y) \cdot \left( \left \vert \pi_{L(z,12r_j)}^\perp (y-z) - \pi_{L(z_0,12r_j)}^\perp (y-z) \right \vert + \left \vert \pi_{L(z_0,12r_j)}^\perp (z-z_0)\right \vert \right)\\
  &\leq  \sum_{z \in Z_j(y)}  \varphi_z(y) \cdot \Big(12 C_1(m) \delta \cdot 3 r_j + \dist\big(z,z_0 + L(z_0,12r_j)\big) \Big)\\
  &\leq \left( 36C_1(m) + 12 \right) r_j \delta.
 \end{align*}
 In the last inequalities we used $z \in \Sigma \cap B_{12r_j}(z_0)$ and therefore $\dist(z,z_0 + L(z_0,12r_j))\leq 12r_j \delta$, as well as the fact that 
 $\sum_{z \in Z_j(y)}  \varphi_z(y)=1$ for $y \in V_j$ several times.\\
% $\Sigma$ is closed and with the $\delta$-Reifenberg-flatness we get, that there exists a $w \in \Sigma \cap B_{12r_j}(z_0) \subset \Sigma_x$ with
 $\tilde y \in L(z_0,12r_j)\cap B_{12r_j}(z_0)$ implies that there exists a $w \in \Sigma \cap B_{12r_j}(z_0) \subset \Sigma_x$ with
  \[ \vert \tilde y - w \vert \leq 12 r_j \delta. \]
 Using $\vert \tilde y - x \vert \leq \vert y-x \vert + \vert y-z_0\vert$, we get
 \begin{align*}
  \vert w-x \vert &\leq \vert w-\tilde y\vert + \vert \tilde y -x \vert\\
 % &\leq \vert w -\tilde y \vert + \vert y-x \vert + \vert y-z_0\vert\\
  &< 12r_j\delta + r-2r_j(1+6\delta) + 2r_j\\
  &= r.
 \end{align*}
 This implies $w \in \Sigma_x$ and
 \begin{align*}
  \dist\left(\sigma_j(y),\Sigma_x\right) &\leq \vert \sigma_j(y) - \tilde y\vert + \vert \tilde y - w\vert \\
  &\leq \left( 36 C_1(m) + 24 \right) r_j \delta.
 \end{align*}
 Due to the definition of $V_j$ and the fact that $\Sigma_x$ is closed,for all $y \in V_j$  we get a $w' \in \Sigma_x$ with
  \[ \dist\left(y,\Sigma_x\right)=\vert y-w'\vert < r_j. \]
 This yields 
  \[ \vert z_0 - w' \vert < 3 r_j \]
 and therefore 
 \begin{align*}
  \vert \tilde y - y \vert &= \left \vert \pi_{L(z_0,12r_j)}^\perp(y-z_0)  \right \vert\\
  &\leq \left \vert \pi_{L(z_0,12r_j)}^\perp (y-w') \right \vert + \left \vert \pi_{L(z_0,12r_j)}^\perp (w'-z_0) \right \vert\\
  &\leq \vert y - w'\vert + 12 r_j \delta.
  %&= \dist\left(y,\Sigma_x\right) + 12r_j \delta.
 \end{align*}
 Finally we get
  \[ \vert \sigma_j(y)-y\vert \leq \dist\left(y,\Sigma_x\right) + \left( 36C_1(m) + 24 \right) r_j \delta. \]
 \item
 For $y \in S_0 \cap \overline{B_{r'}(x)}$ with $r':=r-(2+36C_1(m) \delta +24 \delta)\sum \limits_{k=1}^\infty r_k$ we get 
  \[ \tau_j(y) \in V_{j+1} \cap \overline{B}_{r-(2+36C_1(m) \delta +24 \delta)\sum \limits_{k=j+1}^\infty r_k}(x) \Foa j \in \N_0.\]
 Note that 
  \[r' = r-(2+36C_1(m) \delta +24 \delta)\sum \limits_{k=1}^\infty r_k > r - \frac r {12} \cdot \frac 1 3 (2+\frac 1 4) =\frac {15}{16}r \]
 and 
  \[r' \leq r- 2r_j(1+6\delta). \]
 For $j=0$ and $y \in S_0\cap \overline{ B_{r'}(x)}$ we have $\tau_0(y)=y$ and the Reifenberg-flatness yields
  \[ \dist\left(y, \Sigma_x\right) \leq r\delta < \frac r {48}=r_1. \]
 This implies $\tau_0(y)=y \in V_1\cap \overline{ B_{r'}(x)}$.\\
 Now we assume that the statement holds for $j-1 \in \N_0$ and let $y \in S_0\cap \overline{ B_{r'}(x)}$. We have
 \begin{align*}
  \tau_{j-1}(y) &\in V_{j} \cap \overline{B}_{r-(2+36C_1(m) \delta +24 \delta)\sum \limits_{k=j}^\infty r_k}(x)\\
  &\subset V_j \cap \overline{B}_{r-r_j(2+36C_1(m)\delta+24\delta)}(x)\\
  &\subset V_j \cap \overline{B}_{r-2r_j(1+6\delta)}(x).
 \end{align*}
 Therefore step $(2)$ implies
 \begin{align*}
  \dist\left(\tau_j(y),\Sigma_x\right) &=\dist\left(\sigma_j(\tau_{j-1}(y)),\Sigma_x\right)\\
  &\leq (36C_1(m)+24) r_j\delta\\
  &< r_{j+1},
 \end{align*}
 which is $\tau_j(y) \in V_{j+1}$. Moreover, step $(2)$ leads to 
 \begin{align*}
 \vert \tau_j(y) -x \vert &\leq \vert \sigma_{j}(\tau_{j-1}(y)) - \tau_{j-1}(y) \vert + \vert \tau_{j-1}(y) -x\vert \\
 & \leq (1+36C_1(m)\delta + 24 \delta)r_j + r -(2+36C_1(m)\delta+24 \delta)\sum \limits_{k=j}^\infty r_k\\
 &\leq r- (2+36C_1(m)\delta+24 \delta)\sum \limits_{k=j+1}^\infty r_k.
 \end{align*}
 This is the postulated statement for $j$ and inductively it holds for all $j \in \N_0$.
 \item $\tau_i$ converges on $S_0 \cap \overline{ B_{r'}(x)}$ uniformly to a continuous function $\tau$.\\
 For $y \in S_0\cap \overline{ B_{r'}(x)}$ and $i \in \N$ we get
 \begin{align*}
  \vert \tau_i(y)-\tau_{i-1}(y) \vert &=\vert \sigma_i( \tau_{i-1}(y)) - \tau_{i-1}(y) \vert\\
  &\leq \dist\left(\tau_{i-1}(y),\Sigma_x\right) + (36C_1(m) +24)r_i\delta.
 \end{align*}
 If $i=1$ then 
  \[ \dist\left(\tau_0(y), \Sigma_x\right) \leq r\delta <(36C_1(m)+24)r_0 \delta\]
 and for $i>1$ we get
  \[ \dist\left(\tau_{j-1}(y), \Sigma_x\right) = \dist\left( \sigma_{i-1}( \tau_{i-2}(y)),\Sigma_x\right) \leq (36C_1(m)+24) r_{i-1}\delta, \]
 because of $\tau_{i-2}(y)\in  V_{i-1}$. Using $r_i= \frac 1 4 r_{i-1}$ yields
  \[ \vert \tau_i(y) - \tau_{i-1}(y) \vert \leq \frac 5 4 \left( 36C_1(m)+24 \right) r_{i-1}\delta \Foa i \in \N. \]
 Let $k,j \in \N_0$ then
 \begin{align*}
  \vert \tau_{j+k}(y)- \tau_j(y)\vert &\leq \sum_{i=1}^k \vert \tau_{j+i}(y)-\tau_{j+i-1}(y) \vert \\
  &\leq \frac 5 4 \left( 36C_1(m)+24 \right) \delta \sum_{i=1}^k r_{j+i-1}\\
  &= \frac 5 4 \left( 36C_1(m)+24 \right) \delta r_j \sum_{i=0}^{k-1} 4^{-i}\\
  &\xrightarrow [j \to \infty ]{} 0.
 \end{align*}
 This is independent of $y \in S_0\cap \overline{ B_{r'}(x)}$ and implies the uniform convergence of $\tau_i$ to a function $\tau$. All $\tau_i$ are continuous as compositions of continuous 
 functions and therefore $\tau$ is as well.
 \item $\vert \tau(y)- y \vert < Cr\delta$ and $\tau(S_0\cap \overline{ B_{r'}(x)}) \subset \Sigma_x$.\\
 We have $\tau(y)= \lim_{j\to \infty} \tau_j(y)$ for all $y \in S_0\cap \overline{ B_{r'}(x)}$. Therefore, for all $\varepsilon>0$ there exists a $J=J(\varepsilon) \in \N$ with
  \[ \vert \tau(y)-\tau_j(y)\vert < \varepsilon \Foa j \geq J \AND y \in S_0 \cap \overline{ B_{r'}(x)}. \]
  For $k \in \N_0$ there is a $j > \max \{ k,J\}$ with
 \begin{align*}
  \vert \tau(y)-\tau_k(y)\vert &<\varepsilon + \sum_{i=k}^{j-1} \vert \tau_{i+1}(y) - \tau_i(y) \vert \\
  &\leq \varepsilon + \sum_{i=k}^{\infty} \vert \tau_{i+1}(y) - \tau_i(y) \vert.
 \end{align*}
 The limit $\varepsilon \to 0$ yields
 \begin{align*}
  \vert \tau(y)-\tau_k(y) \vert &\leq \sum_{i=k}^{\infty} \vert \tau_{i+1}(y) - \tau_i(y) \vert\\
  &\leq \frac 5 4 \left( 36C_1(m) +24\right) \delta r_k \cdot\sum_{i=0}^\infty 4 ^{-i}\\
  &= \frac 5 3 \left( 36C_1(m) +24\right) \delta r_k .
 \end{align*}
 Especially for $k=0$ we get
  \[ \vert\tau(y)-y\vert \leq \frac 5 3 \left( 36C_1(m) +24\right) \delta r_0 < \frac 5{144} r. \]
 We have $\tau_j(y)\in V_{j+1}$ for all $j \in \N_0$ and therefore there is a $w_j \in \Sigma_x$ with
  \[\vert\tau_j(y)-w_j\vert < r_{j+1} \Foa j \in N_0.\]
 This leads to
 \begin{align*}
  \dist\left(\tau(y),\Sigma_x\right) &\leq \vert \tau(y)- \tau_j(y) \vert + \vert \tau_j(y) - w_j\vert \\
  &\leq \frac 5 3 \left( 36C_1(m) +24\right) \delta r_j + r_{j+1}\\
  &\xrightarrow[j\to \infty]{}0,
 \end{align*}
 which implies $\tau(S_0\cap \overline{ B_{r'}(x)}) \subset \Sigma_x$ and finishes the proof.
 \end{enumerate}
\end{proof}

\begin{proof}[\textbf{Proof of Lemma \ref{lem:projectionsurj}}]
 Assume there exists a $\xi \in (x+L) \cap B_{\frac r 4}(x)$ such that $\pi_{x+L}(y) \neq {\xi}$ for all $y \in \Sigma \cap B_{\frac r 2}(x)$.
 Using Lemma \ref{lem:reifenbergfunction} leads to a continuous function $\tau \colon (x+L) \cap \overline{B_{\frac{15}{16}r}(x)} \to \Sigma \cap \overline{B_r(x)}$ with
  \[ \vert \tau(y)-y\vert \leq \frac 5 {144} r. \]
 Then for all $z \in (x+L) \cap \overline{B_{\frac r 3}(x)}$ we get
  \[ \vert \tau(z) - x \vert \leq \vert \tau(z)-z\vert + \vert z-x \vert \leq \frac 5 {144} r + \frac 1 3 r < \frac 1 2 r. \]
 Therefore, 
  \[ \pi_{x+L}\left(\tau(z) \right) \neq \xi \Foa z \in (x+L) \cap \overline {B_{\frac r 3}(x)}. \]
 Let $h \colon (x+L) \setminus \{\xi\} \to (x+L) \cap \partial B_{\frac r {12}}(\xi)$ be defined by
  \[ h(z) := \xi + \frac r {12} \cdot \frac {z - \xi} {\vert z - \xi \vert }. \]
 $h$ is a continuous projection of $(x+L) \setminus\{\xi\}$ onto $(x+L)\cap \partial B_{\frac r{12}}(\xi)$. Define
  \[ \varphi := h \circ \pi_{x+L} \circ \tau \colon (x+L) \cap B_{\frac r{12}}(\xi) \to (x+L)\cap \partial B_{\frac r {12}}(\xi). \]
 Note that $B_{\frac r {12}}(\xi) \subset B_{\frac r 3}(x)$, then we have $\xi \not\in \pi_{x+L}\circ \tau((x+L) \cap B_{\frac r{12}}(\xi)$ and $\varphi$ is continuous and well-defined.\\
 For $z \in (x+L) \cap \partial B_{\frac r {12}}(\xi)$ we get
 \begin{align*}
  \left \vert \pi_{x+L} \left( \tau(z) \right) -z\right\vert &= \left \vert \pi_{x+L}\left ( \tau(z)-z\right) \right\vert\\
  &\leq \vert \tau(z)-z\vert \\
  &\leq \frac 5 {144} r.
 \end{align*}
 Moreover,
 \begin{align*}
  \left \vert h\left(\pi_{x+L}(\tau(z))\right)-  \pi_{x+L}(\tau(z))\right \vert &= \dist\left(\pi_{x+L}(\tau(z)), \partial B_{\frac r{12}}(\xi)\right)\\
  &\leq \left \vert \pi_{x+L}(\tau(z))-z \right \vert\\
  &\leq \frac 5 {144} r,
 \end{align*}
 which implies 
  \[ \vert \varphi(z)-z \vert \leq \frac {10}{144}r \Foa z \in (x+L) \cap \partial B_{\frac r {12}}(\xi).\]
 Define $\tilde \varphi \colon L \cap \overline {B_1(0)} \to L \cap \partial B_1(0)$ by
  \[ \tilde \varphi (z) := \frac {12}r \left( \varphi \left( \frac r {12} z + \xi \right) - \xi \right). \]
 The continuity of $\varphi$ implies that $\tilde \varphi$ is also continuous and for $z\in L \cap \overline{B_1(0)}$ we get $\tilde z := \frac r{12}z+\xi \in(x+L) \cap \partial B_{\frac r {12}}(\xi)$, which leads to
  \[ \vert \tilde \varphi(z) - z \vert= \frac {12}r \vert \varphi(\tilde z) - \tilde z\vert \leq \frac {12} r \cdot \frac {10} {144} \cdot r = \frac {10} {12} <1 \Foa z \in L \cap \partial B_1(0).  \]
 But this implies that
  \[ H \colon L \cap \partial B_1(0) \times [0,1] \cong \S^{m-1} \times [0,1] \to L \cap \partial B_1(0) \cong \S^{m-1}, \]
  \[ H(z,t) := \frac {(1-t) \tilde \varphi_{\mid \S^{m-1}}(z) + tz}{\vert (1-t) \tilde \varphi_{\mid \S^{m-1}}(z) + tz \vert} \]
 is a homotopy between $id_{\S^{m-1}}$ and $\tilde \varphi_{\mid \S^{m-1}}$. The homotopy equivalence of the degree of a map (see \cite[5.1.6 a]{hirsch_1988}) leads to
  \[ \operatorname{deg}(\tilde \varphi_{\mid \S^{m-1}} ) = \operatorname{deg}(id_{\S^{m-1}}) = 1. \]
 This is a contradiction to the continuous extension $\tilde \varphi$ of $\tilde \varphi_{\mid \S^{m-1}}$ on $\overline{B^m_1(0)}$, because this would by \cite[5.1.6 b]{hirsch_1988} imply
  \[\operatorname{deg}(\tilde \varphi_{\mid \S^{m-1}} ) = 0. \]
 Therefore, the assumed $\xi$ can not exist.
\end{proof}

\end{appendix}

%%%%%%%%%%%%%%% Literatur %%%%%%%%%%%%%%%%%%%
 \bibliography{Literatur}
 \bibliographystyle{acm}
 %\input{Immersionen_Version2.bbl}
%%%%%%%%%%%%%%%%%%%%%%%%%%%%%%%%%%%%%%%%%%%%%%%%%%%%%

\end{document}